\documentclass[leqno]{amsart}
\usepackage{amsfonts,amsmath,enumerate,latexsym,verbatim,amscd,mathrsfs,color,array,dsfont,appendix}

\usepackage[left,modulo,pagewise, mathlines]{lineno}
\modulolinenumbers[1]

\usepackage[latin1]{inputenc}
\newtheorem{theorem}{Theorem}[section]
\newtheorem{remark}[theorem]{Remark}
\newtheorem{proposition}[theorem]{Proposition}
\newtheorem{hypothesis}[theorem]{Hypothesis}
\newtheorem{lemma}[theorem]{Lemma}
\newtheorem{definition}[theorem]{Definition}



\setlength{\oddsidemargin}{.5cm}
\setlength{\evensidemargin}{.5cm}
\setlength{\textwidth}{15cm}  
\setlength{\textheight}{20cm}
\setlength{\topmargin}{1cm}

\usepackage[colorlinks=true]{hyperref}
\hypersetup{urlcolor=blue,linkcolor=black,citecolor=black,colorlinks=true}

\DeclareMathAlphabet{\mathpzc}{OT1}{pzc}{m}{it}

\title[Coalescence - Fragmentation processes]{Stochastic Coalescence Multi-Fragmentation processes}
\author[Eduardo CEPEDA (LAMA)]{Eduardo CEPEDA} 

\address{Laboratoire d'Analyse et de Math\'ematiques Appliqu\'ees, UMR 8050. Universit\'e Paris-Est. 61, avenue du G\'en\'eral de Gaulle, 94010 Cr\'eteil C\'edex}
\email{eduardo.cepeda@m4x.org}

\begin{document}

\begin{abstract}

We study infinite systems of particles which undergo coalescence and fragmentation, in a manner determined solely by their masses. A pair of particles having masses $x$ and $y$ coalesces at a given rate $K(x,y)$.  A particle of mass $x$ fragments into a collection of particles of masses $\theta_1 x, \theta_2 x, \ldots$ at rate $F(x) \beta(d\theta)$.  We assume that the kernels $K$ and $F$ satisfy  H\"older regularity conditions with indices $\lambda \in (0,1]$ and $\alpha \in [0, \infty)$ respectively.  We show existence of such infinite particle systems as strong Markov processes taking values in $\ell_{\lambda}$, the set of ordered sequences $(m_i)_{i \ge 1}$ such that $\sum_{i \ge 1} m_i^{\lambda} < \infty$. We show that these processes possess the Feller property.  This work relies on the use of a Wasserstein-type distance, which has proved to be particularly well-adapted to coalescence phenomena.\medskip

\noindent
\textbf{Mathematics Subject Classification (2000)}: 60K35, 60J25.\medskip

\noindent
\textbf{\textit{Keywords}}: Stochastic coalescence multi-Fragmentation process, Stochastic interacting particle systems.
\end{abstract}
\maketitle

\begin{center}
To appear in \it{Stochastic Processes And Their Applications}
\end{center}
\section{Introduction}
\setcounter{equation}{0}

A coalescence-fragmentation process is a stochastic process which models the evolution in time of a system of particles undergoing coalescence and fragmentation. The size of a particle increases and decreases due to successive mergers and dislocations.  We assume that each particle is fully identified by its mass $x \in (0,\infty)$.  We consider the mean-field setting, so that the positions of particles in space, their shapes, and other geometric properties are not considered.  Examples of applications of these models arise in the study of polymers, aerosols and astronomy; see the survey papers  \cite{Drake,Aldous} for more details.\medskip

In this paper, we will concern ourselves with the phonomena of coalescence and fragmentation at a macroscopic scale.  Consider a (possibly infinite) system of particles.  The framework we consider is as follows.  The coalescence of particles of masses $x$ and $y$ results in the formation of a new particle of mass $x+y$.  We assume that this coalescence occurs at rate $K(x,y)$, where $K$ is some symmetric coagulation kernel.  Particles may also fragment: we assume that a particle of mass $x$ splits into a collection of particles of smaller masses $\theta_1 x, \theta_2 x, \ldots$ at rate $F(x) \beta(d\theta)$.  Here, $F: (0,\infty) \to [0,\infty)$ and $\beta$ is a positive measure on the set $\Theta := \{\theta = (\theta_i)_{i \ge 1}: 1 > \theta_1 \ge \theta_2 \ge \ldots \ge 0\}$.  In particular, this means that the distribution of the ratios of the masses of the child particles to the mass of the parent particle is a function of these ratios only, and not of the mass of the parent particle.  In this setting, our coalescence-fragmentation processes will be defined through their infinitesimal generators.  Note that at a fixed time the state may be composed of an infinite number of particles which have finite total mass.\medskip

In previous works by Evans and Pitman \cite{Evans_Pitman}, Fournier \cite{Sto-Coal} and Fournier and L\"ocherbach \cite{Sto-Coal2}, pure stochastic coalescents with an infinite number of particles have been constructed for a large class of kernels $K$. See also the survey paper by Aldous \cite{Aldous}. On the other hand, the fragmentation model we study was first introduced by Bertoin \cite{Bertoin2} where the author takes into account an infinite measure $\beta$ and a mechanism of dislocation with a possibly infinite number of fragments. The properties of the only fragmentation model are studied in Bertoin \cite{Bertoin2,Bertoin} and in Hass \cite{Haas,Haas2}. We refer also to the book \cite{Bertoin_Book} where a extensive study of coalescence and fragmentation is carried out.\medskip 

The present paper combines the two phenomena.  We are mainly concerned with a general existence and uniqueness result, and seek to impose as few conditions on $K$, $F$ and $\beta$ as possible.  Roughly speaking, our assumptions are that the coalescence and fragmentation kernels each satisfy a H\"older regularity condition which makes them bounded near the origin ($(0,0)$ and $0$ respectively) but not near $\infty$.  The measure $\beta$ is allowed to be infinite.\medskip

We follow ideas developed in \cite{Sto-Coal,Sto-Coal2} in the context of pure coalescence ($F \equiv 0$).  We work on the set $\ell_{\lambda}$ of ordered sequences of non-negative real numbers $(m_i)_{i \ge 1}$ which are such that $\sum_{i \ge 1} m_i^{\lambda} < \infty$. We endow this space with a Wasserstein-type distance $\delta_{\lambda}$: for $m, \tilde{m} \in \ell_{\lambda}$, let
\begin{equation*}
\delta_{\lambda}(m, \tilde{m}) = \inf_{\pi,\sigma \in \mathrm{Perm}(\mathbb{N})} \sum_{i \ge 1} \left| m_{\pi(i)}^{\lambda} - \tilde{m}_{\sigma(i)}^{\lambda} \right|,
\end{equation*}
where $\mathrm{Perm}(\mathbb{N})$ denotes the set of finite permutations of $\mathbb{N}$.\medskip

Extending results in  \cite{Sto-Coal,Sto-Coal2}, we construct a stochastic particle system undergoing coalescence and fragmentation. In Theorem  \ref{Result_Theo}, we show existence and uniqueness of a stochastic coalescence-fragmentation process as a Markov process in $\mathbb{D}([0,\infty), \ell_{\lambda})$ which enjoys the Feller property. We use a convergence method, starting from a finite process, for which the initial number of particles in the system is bounded, and fragmentation occurs at a bounded rate and produces a bounded number of fragments.  Existence and uniqueness are obtained for these finite processes in a straightforward manner.  We pass to the limit using a Poisson-driven stochastic differential equation (SDE) associated to the model, and coupling techniques.\medskip

In the finite case (see Proposition \ref{Def_finiteproc}), we require only that the coalescence and fragmentation kernels be locally bounded (in the sense that for all $a > 0$, $\sup_{(0,a]^2} K(x,y) <\infty$ and $\sup_{(0,a]} F(x) < \infty$) and that there be only a finite number of particles.  In order to extend the results to a system composed of an infinite number of particles and with fragmentation into an infinite number of fragments, it is necessary to impose the additional continuity conditions on the kernels.\medskip

The first works known to us on coalescence-fragmentation processes focussed on binary fragmentation, where a particle may only split into \emph{two} child particles: denoting $c_t(x)$ the concentration of particles of mass $x\in(0,\infty)$ at time $t$, in this case the dynamics is given by

\begin{eqnarray}\label{binary_Detfrag}
\partial_t c_t(x)  & = &  \frac{1}{2}\int_0^{x} K(y,x-y) c_t(y)c_t(x-y)\, dy - c_t(x)
\int_0^{\infty}K(x,y) c_t(y)\,dy \\ 
&& +   \int_x^{\infty} F_b(x,y-x)c_t(y) dy - \frac{1}{2} c_t(x) \int_0^x
F_b(y,x-y) dy, \notag
\end{eqnarray}

The binary fragmentation kernel $F_b(\cdot,\cdot)$ is a symmetric function and $F_b(x,y)$ gives the rate at which a particle of mass $x+y$ fragments into particles of masses $x$ and $y$.  In this case, the total fragmentation rate of a particle of mass $x$ is given by $\frac{1}{2} \int_0^x F_b(y,x-y) dy$. \medskip

The deterministic setting of our model has been studied in Cepeda \cite{CepedaDet} where existence and uniqueness of the corresponding equation is proved, using the same notation as for equation (\ref{binary_Detfrag}), the equation reads as follows:

\begin{eqnarray}\label{mult_Detfrag}
\partial_t c_t(x) & = &   \frac{1}{2}\int_0^{x} K(y,x-y) c_t(y)c_t(x-y)\, dy - c_t(x)
\int_0^{\infty}K(x,y) c_t(y)\,dy\\
&   & + \int_{\Theta} \Big [\sum_{i=1}^{\infty}
\frac{1}{\theta_i}F\left(\tfrac{x}{\theta_i}\right)c_t\left(\tfrac{x}{\theta_i}\right) -
F(x)c_t(x) \Big ]\beta(d\theta).  \notag
\end{eqnarray}

Note that we can obtain the continuous coagulation binary-fragmentation equation (\ref{binary_Detfrag}), for example, by considering $\beta$ with support in $\{\theta: \theta_1 + \theta_2 = 1\}$ and $\beta(d\theta) = h(\theta_1) d\theta_1 \delta_{\{\theta_2 = 1-\theta_1\}}$ and setting $F_b(x,y) = \frac{2}{x+y} F(x+y) h\left(\frac{x}{x+y}\right)$, where $h(\cdot)$ is a continuous function on $[0,1]$ which is symmetric about $1/2$.\medskip

In the binary framework, and under the additional assumptions that the kernel $K$ and the total fragmentation rate are bounded, some results on existence, uniqueness and convergence to the solution of the deterministic equation  (\ref{binary_Detfrag}) may be found in Guia\c{s} \cite{Guias}. Jeon \cite{Jeon} considered the discrete coagulation-fragmentation equation. He showed that the weak limit points of the stochastic particle system exist and provide a solution.  He assumed that $K(x,y) = o(x) o(y)$ and that the total rate of fragmentation of a particle of mass $x$ is $o(x)$.\medskip

In Fournier and Giet  \cite{FournierGiet}, the authors study the behaviour of small particles in the coagulation-fragmentation equation (\ref{binary_Detfrag}) using a probabilistic approach. They assume a linear bound on the coagulation kernel, but allow for the total fragmentation rate to be infinite.  Eibeck and Wagner \cite{Eibeck_Wagner1} proved tightness of the corresponding stochastic particle systems and characterize the weak limit points as solutions. A continuous coagulation kernel satisfying $K(x,y) = o(x)o(y)$ for $x,y \to \infty$ is required, as is a weakly continuous fragmentation measure for which the total fragmentation rate of a particle of size $x$ is $o(x)$ as $x \to \infty$.  We refer also to Eibeck and Wagner \cite{Eibeck_Wagner2}, where a general model is studied which is used to approach general nonlinear kinetic equations.\medskip

Kolokoltsov \cite{Kolokoltsov2} shows a hydrodynamic limit result for a mass exchange Markov process in the discrete case.  In Kolokoltsov \cite{Kolokoltsov1}, existence and uniqueness are proved under different assumptions to ours; there, the author assumes a multiplicative bound on the coagulation rates and a linear growth for the fragmentation rates.  For that model, the author also proves convergence to the deterministic equation.  An extensive study of the methods used by the author is given in the books \cite{Kolokoltsov_Book2,Kolokoltsov_Book}.  Finally, we also refer to Berestycki \cite{Berest}, who proves a similar result to ours for a class of exchangeable coalescence-fragmentation processes.\medskip

We believe that it is possible to obtain a \textit{hydrodynamic limit} result concerning our model: making tend simultaneously the number of particles to infinite and their sizes to $0$ may allow to prove convergence of the stochastic coalescence-fragmentation process to the solution to equation (\ref{mult_Detfrag}). Considering $d_{\lambda}$ an equivalent distance to $\delta_\lambda$ on measures; see \cite{Dist-Coal}, and $(\mu_t)_{t\geq 0}$ the solution to the deterministic equation which is a measure, we can proceed in the following way. We fix $n \in \mathbb N$, we begin by constructing a system consisting on a finite number of particles, the initial number of particles $N_0$ is set in such a way that $d_{\lambda}(\mu_0,\mu_0^n) \leq C / \sqrt{n}$; see \cite[Proposition 3.2.]{Cepeda_Fournier} where a way to construct such systems is already provided. Thus, $\mu^n_0$ is set as a discretisation of the initial condition $\mu_0$ consisting in $N_0$ atoms of weight $1/n$, this is $\mu_0^n = \frac{1}{n} \sum_{i=1}^{N_0} \delta_{m_i}$, here $\delta_{m_i}$ holds for the Dirac measure on $m_i$.\medskip

\noindent Next, we make the system $\mu_t^n$ to evolve following the dynamics of a coalescence-fragmentation process where the number of particles at each time $t>0$ is determined by the successive mergers and fragmentations, so that  $\mu^n_t = \frac{1}{n} \sum_{i=1}^{N_t} \delta_{m_i}$. This method requires some finite moments to $\mu_0$, but we believe that is possible to control $d_{\lambda}(\mu_t,\mu_t^n)$ by roughly $C_t/\sqrt{n}$ where $C_t >0$, allowing to show convergence and furthermore deduce a rate. Norris \cite{Norris,Norris2} gives a first result on convergence for the pure coalescence case $F\equiv 0$; see Cepeda and Fournier \cite{Cepeda_Fournier} for an explicit rate of convergence where the method discribed in this paragraph is applied also to pure coalescence. \medskip

The rest of this paper is organized as follows.  We introduce the notation and formal definitions in Section \ref{Notations}.  The main results may be found in Section \ref{Result}.  Stochastic coalescence-fragmentation processes are studied in Section \ref{SDEIntro}, and in Appendix \ref{appendix} we give some useful technical details.

\section[Notation and Definitions]{Notation and Definitions}\label{Notations}
\setcounter{equation}{0}

Let $\mathcal S^{\downarrow}$ be the set of non-increasing sequences $m=(m_n)_{n\geq 1}$ with values in $[0,+\infty)$. A state $m$ in $\mathcal S^{\downarrow}$  represents the sequence of the ordered masses of the particles in a particle system. Next, for $\lambda \in (0,1]$, consider
\begin{eqnarray}\label{Intro:ellset}
\ell_{\lambda} & = & \left\lbrace m = (m_k)_{k\geq 1} \in \mathcal S^{\downarrow}, \|m\|_{\lambda}:= \sum_{k=1}^{\infty} m^{\lambda}_k < \infty \right\rbrace.
\end{eqnarray}

Consider also the sets of finite particle systems, completed for convenience with infinite $0$-s.
\begin{equation}
\ell_{0+}  = \left\lbrace m = (m_k)_{k\geq 1} \in \mathcal S^{\downarrow},  \inf\{k\geq 1, m_k = 0\} < \infty\right\rbrace. \nonumber
\end{equation}
\begin{remark}\label{inclusion}
Note that for all $0<\lambda_1<\lambda_2$, $\ell_{0+}\subset\ell_{\lambda_1} \subset \ell_{\lambda_2}$. Note also that, since $\|m\|_1 \leq \|m\|_{\lambda}^{\frac{1}{\lambda}}$ the total mass of $m \in \ell_{\lambda}$ is always finite.
\end{remark}

\begin{hypothesis}\label{HypSto}
We present now the general hypotheses.\medskip

\noindent \emph{Coagulation and Fragmentation Kernels.-} We consider coagulation kernel $K$, symmetric $K(x,y) = K(y,x)$ for $(x,y) \in [0,\infty)^2$ and bounded on every compact subset in $(0,\infty)^2$. There exists $\lambda \in (0,1]$ such that for all $a>0$ there exists a constant $\kappa_a > 0$ such that for all $x$, $y$, $\tilde x$, $\tilde y \in (0,a]$,
\begin{eqnarray}
|K(x,y) - K(\tilde x,\tilde y)| &\leq & \kappa_a \left[|x^{\lambda} - \tilde x^{\lambda}| + |y^{\lambda} - \tilde y^{\lambda}|\right],\label{Hyp_CoagK}
\end{eqnarray}
We consider also a fragmentation kernel $F: (0,\infty) \mapsto [0,\infty)$, bounded on every compact subset in $(0, \infty)$. There exists $\alpha \in [0, \infty)$ such that for all $a>0$ there exists a constant $\mu_a > 0$ such that for all $x,\,\tilde x\in(0,a]$,
\begin{eqnarray}
|F(x) - F(\tilde x)| \leq\mu_a\, |x^{\alpha} - \tilde x^{\alpha}|.\label{Hyp_FragK}
\end{eqnarray}

\noindent We define the set of ratios by
\begin{equation}
\Theta = \left\lbrace \theta = (\theta_k)_{k\geq 1} : 1 > \theta_1 \geq \theta_2 \geq \ldots \geq 0\, \right\rbrace. \nonumber
\end{equation} 

\noindent \emph{The $\beta$ measure.-} 
We consider on $\Theta$ a measure $\beta(\cdot)$ and assume that it satisfies 
\begin{eqnarray}
\beta\left( \sum_{k\geq 1} \theta_k > 1 \right) & = & 0, \label{Hyp1_Beta} \\
C_{\beta}^{\lambda} :=  \int_{\Theta} \left[\sum_{k\geq 2} \theta_k^{\lambda} + (1-\theta_1)^{\lambda}\right]\beta(d\theta) & <  & \infty, \hspace{5mm} \textrm{for some } \lambda\in (0,1]. \label{Hyp2_Beta}
\end{eqnarray}
\end{hypothesis}
For example, the coagulation kernels listed below, taken from the mathematical and physical literature, satisfy Hypothesis \ref{HypSto}.

\begin{align} 
K(x,y) & \equiv 1 & \textrm{(\ref{Hyp_CoagK}) holds with } \kappa_a = 0, \nonumber \\ 
K(x,y) & =   (x^\alpha + y^\alpha)^\beta & \textrm{with}\,\,\,
\alpha\in(0,\infty),\,\,\beta\in(0,\infty) \,\,\,\textrm{and}\,\,\,\lambda = \alpha\beta \in(0,1],
\nonumber \\ 
K(x,y) & =   x^\alpha y^\beta + x^\beta y^\alpha  & \textrm{with}\,\,\, 0 \leq \alpha \leq \beta \leq
1 \,\,\,\textrm{and}\,\,\,\lambda = \alpha+\beta \in(0,1], \nonumber \\ 
K(x,y) & =  (xy)^{\alpha/2} (x + y)^{-\beta}  & \textrm{with}\,\,\, \alpha \in (0,1],\,\,\,
\beta\in[0,\infty) \,\,\,\textrm{and}\,\,\,\lambda = \alpha-\beta \in(0,1], \nonumber \\ 
K(x,y) & =   (x^\alpha+y^\alpha)^{\beta} |x^{\gamma} - y^{\gamma}| &  \textrm{with}\,\,\, \alpha \in
(0,\infty),\,\,\, \beta\in(0,\infty),\,\,\,\gamma\in(0,1] \,\,\,\textrm{and}\,\,\,\lambda =
\alpha\beta+\gamma \in(0,1], \nonumber \\ 
K(x,y) & =   (x+y)^{\lambda} e^{-\beta(x+y)^{-\alpha}} & \textrm{with}\,\,\, \alpha \in
(0,\infty),\,\,\, \beta\in(0,\infty), \,\,\,\textrm{and}\,\,\,\lambda \in(0,1].\nonumber
\end{align} 
On the other hand, the following fragmentation kernels satisfy Hypothesis \ref{HypSto}.
 
$F(x) \equiv  1, $

$F(x) = x^\alpha,\,\textrm{with } \alpha >0.$

\begin{remark}\label{on_beta} 
\begin{enumerate}[i)]
\item The property (\ref{Hyp1_Beta}) means that there is no gain of mass due to the dislocation of a particle. Nevertheless, it does not exclude a loss of mass due to the dislocation of the particles.

\item Note that under (\ref{Hyp1_Beta}) we have $\sum_{k\geq 1} \theta_k - 1\leq 0$ $\beta$-a.e., and since $\theta_k \in [0,1)$ for all $k\geq 1$, $\theta_k \leq \theta_k^{\lambda}$, we have
\begin{equation}\label{inebeta}
\left\lbrace
\begin{array}{ll}
1-\theta_1^{\lambda} \leq 1-\theta_1 \leq (1-\theta_1)^{\lambda},\,\,   \beta-a.e.,\\[5mm]
\sum_{k \geq 1} \theta_k^{\lambda} - 1 = \sum_{k \geq 2} \theta_k^{\lambda} - (1 - \theta_1^{\lambda}) \leq \sum_{k \geq 2} \theta_k^{\lambda},\,\, \beta-a.e.
\end{array} \right.
\end{equation} 
implying the following bounds:
\begin{equation}\label{Clambdabounds}\left\lbrace
\begin{array}{c}
\displaystyle\int_{\Theta} (1-\theta_1) \beta(d\theta)  \leq  C_{\beta}^{\lambda}, \,\,\,\displaystyle \int_{\Theta} \left[\sum_{k\geq 2} \theta_k^{\lambda} + (1-\theta_1^{\lambda}) \right] \beta(d\theta)   \leq  C_{\beta}^{\lambda}, \\[5mm]
\displaystyle \int_{\Theta} \left( \sum_{k \geq 1} \theta_k^{\lambda} -1 \right)^+\beta(d\theta)    \leq C_{\beta}^{\lambda}.
\end{array} \right.
\end{equation}
We point out that  $\int_{\Theta}\left| \sum_{k \geq 1} \theta_k^{\lambda} -1 \right|\beta(d\theta)    \leq  2 C_{\beta}^{\lambda}$ but when the term $\sum_{k \geq 1} \theta_k^{\lambda} - 1$ is negative our calculations can be realized in a simpler manner. We will thus use the positive bound given in the last inequality.
\end{enumerate}
\end{remark}

\noindent Within the whole paper, we will use the convention that, when dealing with sequences in $\ell_{\lambda}$,
\begin{eqnarray*}
K(x,0) = 0 & \textrm{\,\,for all \,\,} x\in[0,\infty),\\
F(0) = 0.  & 
\end{eqnarray*}
We will always use this convention, even in the case where, e.g.,  $K\equiv 1$ on $(0,\infty)\times(0,\infty)$ and $F\equiv 1$ on $(0,\infty)$. Actually, $0$ is a symbol used to refer to a particle that does not exist. For $\theta \in \Theta$ and $x \in (0,\infty)$ we will write $\theta\cdot x$ to say that the particle of mass $x$ of the system splits into $\theta_1 x, \theta_2 x,\ldots$\medskip

\noindent Furthermore, we will refer to the property of ``local boundedness" of the coagulation and fragmentation kernels in the sense that for all $a>0$, $\sup_{(0,a]^2} K(x,y) <\infty$ and $\sup_{(0,a]} F(x) <\infty$.\medskip

Considering  $m\in \ell_{\lambda}$, the dynamics of the process is as follows. A pair of particles $m_i$ and $m_j$ coalesce with rate given by $K(m_i,m_j)$ and this is described by the map $c_{ij}: \ell_{\lambda} \rightarrow \ell_{\lambda}$ (see below). A particle $m_i$ fragments following the dislocation configuration $\theta \in \Theta$  with rate given by $F(m_i)\beta(d\theta)$ and this is described by the map $f_{i\theta}: \ell_{\lambda} \rightarrow \ell_{\lambda}$, with
\begin{equation}\label{c_and_f}
\begin{array}{lcl}
c_{ij}(m) & = & \textrm{reorder} (m_1,\ldots,m_{i-1},m_{i}+m_{j}, m_{i+1},\ldots,m_{j-1},m_{j+1},\ldots), \\
f_{i\theta}(m) & = & \textrm{reorder} (m_1,\ldots, m_{i-1},\theta\cdot m_i, m_{i+1},\ldots), 
\end{array}
\end{equation}
the reordering being in the decreasing order.\medskip

\noindent \textbf{Distances on $S^{\downarrow}$}\medskip

We endow $S^{\downarrow}$ with the pointwise convergence topology, which can be metrized by the distance
\begin{equation}\label{distance1}
d(m,\tilde{m}) = \sum_{k\geq 1} 2^{-k}|m_k - \tilde{m}_k|.
\end{equation}

\noindent Also, for $\lambda \in (0,1]$ and $m,\,\tilde{m} \in \ell_{\lambda}$, we recall that since the masses are decreasingly ordered, from \cite[Lemma 3.1.]{Sto-Coal} we have the equality
\begin{equation}\label{def_distance}
\delta_{\lambda}(m,\tilde{m}) = \inf_{\pi,\sigma\in Perm(\mathbb N)} \sum_{i\geq 1}|m_{\pi(i)}^{\lambda} - \tilde m_{\sigma(i)}^{\lambda}| =  \sum_{k\geq 1} |m_k^{\lambda} - \tilde{m}_k^{\lambda}|,
\end{equation}
In this paper we will use the second equality.\medskip

\noindent \textbf{Infinitesimal generator $\mathcal L_{K,F}^{\beta}$}\medskip

\noindent Considering some coagulation and fragmentation kernels $K$ and $F$ and a measure $\beta$. We define the infinitesimal generator $\mathcal L_{K,F}^{\beta}$ for any $\Phi: \ell_{\lambda} \rightarrow \mathbb R$ sufficiently regular and for any $m \in \ell_{\lambda}$ by
\begin{equation}\label{generator}
\mathcal L_{K,F}^{\beta} \Phi(m) = \sum_{1\leq i < j < \infty} K(m_i,m_j) \left[ \Phi\left(c_{ij}(m)\right) - \Phi(m) \right] +  \sum_{i\geq 1} F(m_i) \int_{\Theta} \left[ \Phi\left(f_{i\theta}(m)\right) - \Phi(m) \right] \beta(d\theta).
\end{equation}

\section{Results}\label{Result}
\setcounter{equation}{0}
We first define the finite coalescence - fragmentation process. In order to properly define this process we need to add two properties to the measure $\beta$. Namely, the measure of $\Theta$ must be finite and the number of fragments at each fragmentation must be bounded:
\begin{equation}\label{Hypfinite_beta}\left\lbrace
\begin{array}{rccr}
\beta(\Theta) & < & \infty, & \\
\beta(\Theta\setminus \Theta_k) & = & 0 & \textrm{ for some } k\in\mathbb N,
\end{array} \right.
\end{equation}
where
\begin{equation}
\Theta_k = \left\lbrace \theta = (\theta_n)_{n \geq 1} \in \Theta :\, \theta_{k+1} = \theta_{k+2} = \cdots = 0\right\rbrace. \nonumber
\end{equation}

\begin{proposition}[Finite Coalescence - Fragmentation processes]\label{Def_finiteproc}

Consider $\lambda \in (0,1]$, $\alpha\geq 0$ and  $m\in\ell_{0+}$. Assume that a coagulation kernel $K$ bounded on compact subsets on $[0,\infty)^2$, a fragmentation kernel $F$ bounded on compact subsets of $[0,\infty)$ and a measure $\beta$ satisfy Hypotheses \ref{HypSto}. Furthermore, suppose that $\beta$ satisfies (\ref{Hypfinite_beta}). \medskip

Then, there exists a unique (in law) strong Markov process $(M(m,t))_{t\geq 0}$ starting at $M(m,0) = m$ and with infinitesimal generator  $\mathcal L_{K,F}^{\beta}$.
\end{proposition}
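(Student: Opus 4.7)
The plan is to construct $M(m,\cdot)$ explicitly as a pure-jump Markov chain with exponential holding times, and then check non-explosion. First I would exploit the fact that coalescence preserves the total mass and, by (\ref{Hyp1_Beta}), fragmentation cannot create mass ($\sum_k \theta_k \le 1$ $\beta$-a.e.), so $\|m(t)\|_1$ is non-increasing along every trajectory. Hence every particle that ever appears has mass bounded by $a := \|m(0)\|_1 < \infty$, and the local boundedness assumption gives $\bar K := \sup_{(0,a]^2} K < \infty$ and $\bar F := \sup_{(0,a]} F < \infty$. Combined with $\beta(\Theta) < \infty$ from (\ref{Hypfinite_beta}), this shows that the total jump rate out of any state with $N$ positive coordinates is bounded by
\[
\binom{N}{2}\bar K + N \bar F \beta(\Theta),
\]
which is finite for every $N < \infty$; coalescence or fragmentation of a zero coordinate has zero rate by the convention $K(x,0)=F(0)=0$.

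Starting from $m \in \ell_{0+}$ with $N_0 < \infty$ positive coordinates, I would then build the process inductively: after the $n$-th jump, in state $m^{(n)}$, draw an exponential holding time with parameter equal to the current total rate, and then pick either a coalescence $c_{ij}(m^{(n)})$ with probability proportional to $K(m_i^{(n)},m_j^{(n)})$ or a fragmentation $f_{i\theta}(m^{(n)})$ with intensity proportional to $F(m_i^{(n)}) \beta(d\theta)$, which is well-defined since $\beta/\beta(\Theta)$ is a probability measure. Setting $M(m,t) = m^{(n)}$ on $[T_n, T_{n+1})$ produces a c\`adl\`ag path whose infinitesimal jump rates agree with (\ref{generator}) by construction. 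The main point to check is non-explosion: letting $N_t$ denote the number of positive coordinates, coalescence decreases $N_t$ by $1$, while by the restriction to $\Theta_k$ in (\ref{Hypfinite_beta}) each fragmentation produces at most $k$ fragments out of one particle, hence adds at most $k-1$ new coordinates, at a total rate bounded by $N_t \bar F \beta(\Theta)$. Therefore $N_t$ is stochastically dominated by a linear pure birth process with per-particle rate $(k-1)\bar F \beta(\Theta)$, which does not explode in finite time, so $T_\infty := \lim_n T_n = \infty$ almost surely and the construction yields a well-defined $\ell_{0+}$-valued process for all $t \ge 0$.

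Finally, the constructed process is strong Markov by the standard theory of pure-jump Markov processes with finite instantaneous rate at every visited state, and it takes values in $\ell_{0+} \subset \ell_\lambda$ (Remark \ref{inclusion}). Uniqueness in law follows because any process with generator $\mathcal L_{K,F}^\beta$ started at $m$ must jump at the rates prescribed above, and so must have identical inter-jump times and jump distributions; hence its law coincides with the one just built. The only potentially delicate step is non-explosion, which would be a serious obstacle if $\beta$ could produce unboundedly many fragments, but the constraint $\beta(\Theta \setminus \Theta_k) = 0$ in (\ref{Hypfinite_beta}) reduces it to the Yule-type domination described above.
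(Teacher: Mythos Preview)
Your proposal is correct and follows essentially the same approach as the paper. The paper routes the argument through the Poisson-driven SDE representation (Proposition \ref{Finite_PoissonProc}) and proves non-explosion via Lemma \ref{finite_intensities}, where the key steps are identical to yours: $\|M(t)\|_1$ is non-increasing so the kernels are uniformly bounded by $\overline K_m$ and $\overline F_m$, each fragmentation adds at most $k-1$ particles, and the resulting control on $N_t$ gives $T_\infty=\infty$. The only cosmetic difference is that the paper bounds $\mathbb E[\sup_{[0,t\wedge T_\infty)} N_s]$ by Gronwall to get $N_0 e^{(k-1)\overline F_m \beta(\Theta)t}$, whereas you phrase the same exponential growth as stochastic domination by a Yule process; these are interchangeable.
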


We point out that in order to prove existence and uniqueness of the Finite Coalescence - Fragmentation process, kernels $K$ and $F$ do not need to satisfy the continuity conditions (\ref{Hyp_CoagK}) and (\ref{Hyp_FragK}), respectively. The proof is based on the existence and uniqueness of its Poissonian representation Proposition \ref{Finite_PoissonProc}. for which the jump intensity remains bounded on finite time-intervals Lemma \ref{finite_intensities}. \medskip

We wish to extend this process to the case where the initial condition consists of an infinite number of particles and for more general fragmentation measures $\beta$ under some additional continuity conditions for the kernels. For this, we will build a particular sequence of finite coalescence - fragmentation processes, the result will be obtained by passing to the limit. \medskip

We introduce the following notation that will be useful when working with finite processes. We consider a measure $\beta$ satisfying Hypotheses \ref{HypSto}., $n\in \mathbb N$ and the set $\Theta(n)$ defined by $ \Theta(n) = \left\lbrace \theta \in \Theta : \theta_1 \leq 1-\frac{1}{n} \right\rbrace$, we consider also the projector 
\begin{equation}\label{projector}
\begin{array}{llcl}
\psi_n: & \Theta & \rightarrow & \Theta_n \\
		& \theta & \mapsto & \psi_n(\theta) = (\theta_1,\ldots,\theta_n,0,\ldots),
\end{array}
\end{equation}
and we put
\begin{equation} \label{beta_n}
\beta_n  = \mathds 1_{\theta\in\Theta(n)}\beta \circ \psi_n^{-1}.
\end{equation}
The measure $\beta_n$ can be seen as the restriction of $\beta$ to the projection of $\Theta(n)$ onto $\Theta_n$. Note that $\Theta(n) \subset \Theta(n+1)$ and that since we have excluded the degenerated cases $\theta_1=1$ we have $\bigcup_n \Theta(n) = \Theta$.

\begin{lemma}[Definition.- The finite process $M^n(m,t)$] \label{lemmaFinite}Consider $\lambda \in (0,1]$, $\alpha\geq0$ and  $m\in\ell_{0+}$. Assume that the coagulation kernel $K$, the fragmentation kernel $F$ and the measure $\beta$ satisfy Hypotheses \ref{HypSto}. Furthermore, recall $\beta_n$ as defined by (\ref{beta_n}).\medskip

Then, there exists a unique (in law) strong Markov process $(M^n(m,t))_{t\geq 0}$ starting at $m$ and with infinitesimal generator $\mathcal L_{K,F}^{\beta_n}$.
\end{lemma}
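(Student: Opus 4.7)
The plan is to reduce this lemma directly to Proposition \ref{Def_finiteproc} by verifying that the truncated measure $\beta_n$ defined in (\ref{beta_n}) satisfies all the hypotheses required there, namely Hypotheses \ref{HypSto} on the measure side and the additional finiteness/bounded-fragments condition (\ref{Hypfinite_beta}). Once that is established, Proposition \ref{Def_finiteproc} applied with $\beta$ replaced by $\beta_n$ yields a unique-in-law strong Markov process $(M^n(m,t))_{t\ge 0}$ starting at $m\in\ell_{0+}$ with generator $\mathcal{L}_{K,F}^{\beta_n}$.

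The first task is to check (\ref{Hypfinite_beta}). By construction $\beta_n$ is supported on $\psi_n(\Theta(n))\subset\Theta_n$, so the second part $\beta_n(\Theta\setminus\Theta_n)=0$ is immediate. For the total mass, I would use the fact that on $\Theta(n)$ one has $1-\theta_1\ge 1/n$, so $(1-\theta_1)^\lambda \ge n^{-\lambda}$ pointwise. Combined with (\ref{Hyp2_Beta}) this gives
\begin{equation*}
\beta_n(\Theta)=\beta(\Theta(n))\le n^\lambda\int_{\Theta(n)}(1-\theta_1)^\lambda\,\beta(d\theta)\le n^\lambda C_\beta^\lambda<\infty,
\end{equation*}
which is the key quantitative estimate and the reason we excise a neighbourhood of $\theta_1=1$ before projecting.

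Next, I would check that $\beta_n$ still satisfies Hypothesis \ref{HypSto}, i.e. (\ref{Hyp1_Beta}) and (\ref{Hyp2_Beta}). Mass conservation (\ref{Hyp1_Beta}) transfers to $\beta_n$ because the projection $\psi_n$ only truncates coordinates: if $\sum_{k\ge 1}\theta_k\le 1$ $\beta$-a.e., then $\sum_{k\ge 1}\psi_n(\theta)_k=\sum_{k=1}^n\theta_k\le\sum_{k\ge 1}\theta_k\le 1$, so $\beta_n\bigl(\sum_k\theta_k>1\bigr)=0$. For (\ref{Hyp2_Beta}), a change of variables along $\psi_n$ and the same truncation observation give
\begin{equation*}
\int_\Theta\Bigl[\sum_{k\ge 2}\theta_k^\lambda+(1-\theta_1)^\lambda\Bigr]\beta_n(d\theta)=\int_{\Theta(n)}\Bigl[\sum_{k=2}^n\theta_k^\lambda+(1-\theta_1)^\lambda\Bigr]\beta(d\theta)\le C_\beta^\lambda,
\end{equation*}
since $\theta_1$ is unchanged by $\psi_n$ and $\sum_{k=2}^n\theta_k^\lambda\le\sum_{k\ge 2}\theta_k^\lambda$.

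Having verified all hypotheses, Proposition \ref{Def_finiteproc} applies verbatim (with the pair $(K,F,\beta_n)$) and delivers the required strong Markov process with generator $\mathcal{L}_{K,F}^{\beta_n}$. I do not expect any genuine obstacle here; the only conceptual point is that the excision $\theta_1\le 1-1/n$ built into $\beta_n$ is precisely what converts the possibly infinite measure $\beta$ near the degenerate configuration $\theta_1=1$ into a finite measure, and the projection $\psi_n$ truncates to at most $n$ fragments, matching the two requirements of (\ref{Hypfinite_beta}). All other properties are preserved by truncation, so the reduction is clean.
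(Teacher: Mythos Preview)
Your proposal is correct and follows essentially the same route as the paper: verify that $\beta_n$ satisfies (\ref{Hypfinite_beta}) and then invoke Proposition~\ref{Def_finiteproc}. The only cosmetic difference is in the finiteness estimate for $\beta_n(\Theta)$: the paper uses $1-\theta_1\ge 1/n$ on $\Theta(n)$ together with $\int_\Theta(1-\theta_1)\,\beta(d\theta)\le C_\beta^\lambda$ from (\ref{Clambdabounds}) to get the bound $nC_\beta^\lambda$, whereas you use $(1-\theta_1)^\lambda\ge n^{-\lambda}$ directly with (\ref{Hyp2_Beta}) to get $n^\lambda C_\beta^\lambda$; both are valid, and your additional explicit check that $\beta_n$ inherits (\ref{Hyp1_Beta}) and (\ref{Hyp2_Beta}) is a welcome piece of care that the paper leaves implicit.
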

This lemma is straightforward, it suffices to note that $\beta_n$ satisfies (\ref{Hypfinite_beta}),  that the kernels $K$ and $F$ are locally bounded since they satisfy respectively (\ref{Hyp_CoagK}) and (\ref{Hyp_FragK}) and to use Proposition \ref{Def_finiteproc}. Indeed, recall (\ref{Clambdabounds}), for $n\geq 1$
\begin{equation}
\beta_n(\Theta) = \int_{\Theta} \mathds 1_{\{1-[\psi_n(\theta)]_1 \geq \frac{1}{n}\}} \, \beta(d\theta) \leq n \int_{\Theta} (1-\theta_1) \, \beta(d\theta) \leq n\, C^{\lambda}_{\beta} < \infty.\nonumber
\end{equation}

We have chosen an explicit sequence of measures $(\beta_n)_{n\geq1}$ because it will be easier to manipulate when coupling two coalescence-fragmentation processes. Nevertheless, more generally, taking any sequence of measures $\beta_n$ satisfying (\ref{Hypfinite_beta}) and converging towards $\beta$ in a suitable sense as $n$ tends to infinity should provide the same result.\medskip

Our main result concerning stochastic Coalescence-Fragmentation processes is the following.

\begin{theorem}\label{Result_Theo}
Consider $\lambda \in (0,1]$, $\alpha\geq0$. Assume that the coagulation $K$ and the fragmentation $F$ kernels and a measure $\beta$ satisfy Hypotheses \ref{HypSto}. Endow $\ell_{\lambda}$ with the distance $\delta_{\lambda}$.
\begin{enumerate}[i)]
 \item \label{ResultTheo_i} For any $m\in\ell_{\lambda}$, there exists a (necessarily unique in law) strong Markov process $(M(m,t))_{t\geq 0} \in \mathbb D\left([0,\infty),\ell_{\lambda}\right)$ satisfying the following property.\medskip
 
\noindent For any sequence $m^n\in \ell_{0+}$ such that \,$\lim_{n\rightarrow \infty}\delta_{\lambda}(m^n,m)$ $=0$, the sequence $(M^n(m^n,t))_{t\geq0}$ defined in Lemma \ref{lemmaFinite}, converges in law, in $\mathbb D\left([0,\infty),\ell_{\lambda}\right)$, to  $(M(m,t))_{t\geq 0}$.
\item \label{ResultTheo_ii} The obtained process is Feller in the sense that for all $t\geq 0$, the map $m\mapsto Law\left(M(m,t)\right)$ is continuous from $\ell_{\lambda}$ into $\mathcal P(\ell_{\lambda})$ (endowed with the distance $\delta_{\lambda}$).
\item \label{ResultTheo_iii} Recall the expression  (\ref{distance1}) of the distance $d$. For all bounded application  $\Phi:\ell_{\lambda} \rightarrow \mathbb R$ satisfying $|\Phi(m) - \Phi(\tilde m)| \leq a \,d(m,\tilde m)$ for some $a>0$, the process 
\begin{eqnarray*}
\Phi\left(M(m,t)\right) - \Phi\left(m\right) - \int_0^t \mathcal L_{K,F}^{\beta}\left(M(m,s)\right) ds
\end{eqnarray*}
is a local martingale.
\end{enumerate}
\end{theorem}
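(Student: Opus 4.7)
The plan is to realise each finite process $M^n$ from Lemma \ref{lemmaFinite} via its Poissonian SDE representation (Proposition \ref{Finite_PoissonProc}) and to couple two such processes $M^n(m^n,\cdot)$ and $M^{n'}(m^{n'},\cdot)$, $n'\ge n$, by driving them with \emph{the same} family of Poisson measures, one $\mathcal N_{ij}$ per ordered pair of indices for coagulation and one $\mathcal M_i$ per index for fragmentation. The idea is then to show that the sequence $(M^n(m^n,\cdot))_n$ is Cauchy in a suitable sense, pass to the limit to obtain $M(m,\cdot)$, and read off the Feller and martingale statements from the very same estimates.

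\emph{Coupling estimate and Cauchy property.} Set $\Delta_t^{n,n'} := \mathbb{E}[\delta_\lambda(M^n(m^n,t), M^{n'}(m^{n'},t))]$. Writing $\delta_\lambda$ as the telescoping sum $\sum_k |M^n_k(t)^\lambda-M^{n'}_k(t)^\lambda|$ via (\ref{def_distance}), one would decompose the dynamics of $\Delta_t^{n,n'}$ along three contributions: (a) an event executed synchronously by both systems, whose effect telescopes thanks to subadditivity of $x\mapsto x^\lambda$; (b) a coagulation whose two rates differ, producing by (\ref{Hyp_CoagK}) a term $\le C_a \Delta_t^{n,n'}$ after truncating masses at level $a$; and (c) a fragmentation present in the $n'$-process but not in the $n$-process because $\theta\in\Theta(n')\setminus\Theta(n)$, contributing a residual
\begin{equation*}
\rho_n \;:=\; \int_{\Theta\setminus\Theta(n)}\Bigl[\sum_{k\ge 2}\theta_k^\lambda + (1-\theta_1^\lambda)\Bigr]\beta(d\theta),
\end{equation*}
which vanishes as $n\to\infty$ by (\ref{Hyp2_Beta}) and dominated convergence. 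After an a priori moment estimate of the form $\mathbb E\|M^n(m,t)\|_\lambda\le e^{C_\beta^\lambda t}\|m\|_\lambda$ that controls the $a$-truncation error, Gronwall yields
\begin{equation*}
\Delta_t^{n,n'} \;\le\; e^{C t}\bigl(\delta_\lambda(m^n,m^{n'}) + t\,\rho_n\bigr).
\end{equation*}

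\emph{Limit, Feller property and martingale.} With $m^n\to m$ in $\delta_\lambda$, this shows that $(M^n(m^n,t))_n$ is Cauchy in $L^1$ at every $t$; the analogous computation between two nearby times gives the tightness in $\mathbb D([0,\infty),\ell_\lambda)$ needed for \ref{ResultTheo_i}, while uniqueness in law follows because the bound is independent of the particular sequence $m^n$ chosen. Part \ref{ResultTheo_ii} is then immediate: applying the coupling directly to two initial conditions $m,\tilde m\in\ell_\lambda$ yields $\mathbb E[\delta_\lambda(M(m,t),M(\tilde m,t))]\le e^{Ct}\delta_\lambda(m,\tilde m)$, hence $m\mapsto \mathrm{Law}(M(m,t))$ is Lipschitz for the Wasserstein distance on $\mathcal P(\ell_\lambda)$ induced by $\delta_\lambda$. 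For \ref{ResultTheo_iii}, the Dynkin formula for each $M^n$ states that
\begin{equation*}
\Phi(M^n(m,t))-\Phi(m)-\int_0^t \mathcal L_{K,F}^{\beta_n}\Phi(M^n(m,s))\,ds
\end{equation*}
is a martingale for any bounded $d$-Lipschitz $\Phi$; the standard continuity estimates for the maps $c_{ij}$ and $f_{i\theta}$ in the $d$-metric dominate the integrand uniformly in $n$, so the convergences $M^n\to M$ and $\beta_n\uparrow\beta$, together with dominated convergence, permit passage to the limit.

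\emph{Main obstacle.} The heart of the argument is the coupling computation: for each single event one has to select a matching between the indices of the two configurations before and after the action of $c_{ij}$ or $f_{i\theta}$ that is compatible with the second equality in (\ref{def_distance}), so that the change in $\delta_\lambda$ telescopes cleanly into sums of $|m_k^\lambda-\tilde m_k^\lambda|$. The multi-fragmentation case is the most delicate, because a single event injects an a priori infinite number of new particles simultaneously, and the truncation $\psi_n(\theta)$ forces the comparison of an $n'$-fragmentation with an $n$-fragmentation whose tail $(\theta_{n+1},\theta_{n+2},\ldots)$ remains attached to the parent particle in the $n$-process; reconciling this asymmetry is what makes the explicit choice (\ref{beta_n}) of $\beta_n$ the natural one.
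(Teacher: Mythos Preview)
Your overall strategy---couple the finite processes $M^n$ and $M^{n'}$ via the same Poisson measures, control $\delta_\lambda(M^n(t),M^{n'}(t))$ by Gronwall, and pass to the limit---is exactly the paper's approach. But the Gronwall step as you have written it does not close, and this is the central technical difficulty of the proof.

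\textbf{The gap.} In your decomposition, contribution (b) (a coagulation executed by one process but not the other, or at different rates) does \emph{not} produce a term bounded by $C_a\,\Delta_t^{n,n'}$. Tracking the computation carefully (cf.\ the paper's term $B_t^c$), one gets inside the time integral a quantity of the form
\[
\kappa_a\,\|M^n(s)\|_\lambda\;\delta_\lambda\bigl(M^n(s),M^{n'}(s)\bigr),
\]
because the sum over pairs $i<j$ of $|K-K|$-differences weighted by $2(M^n_j)^\lambda$ pulls out a full $\lambda$-moment. The factor $\|M^n(s)\|_\lambda$ is random and \emph{not} pathwise bounded; the a priori estimate $\mathbb E\|M^n(s)\|_\lambda\le e^{\overline F_m C_\beta^\lambda s}\|m\|_\lambda$ you invoke is only an expectation bound and cannot be pulled out of $\mathbb E[\|M^n(s)\|_\lambda\cdot\delta_\lambda(\cdots)]$. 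Your ``truncation at level $a$'' refers to the \emph{largest mass}, which is indeed bounded by $\|m\|_1$, but that only fixes the H\"older constant $\kappa_a$, not the moment factor. Consequently the displayed inequality $\Delta_t^{n,n'}\le e^{Ct}(\delta_\lambda(m^n,m^{n'})+t\rho_n)$ is not available.

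\textbf{What the paper does instead.} The paper introduces, for each $x>0$, the stopping times $\tau^x_n=\inf\{t:\|M^n(t)\|_\lambda\ge x\}$ and proves the Gronwall estimate only on $[0,t\wedge\tau^x_n\wedge\tau^x_{n'}]$, with a constant $C(x+1)$ depending on $x$ (Proposition~\ref{Result_Prop}\,\textit{\ref{Prop_C01_ii})} and the analogous bound~(\ref{dist_01nb})). The passage from this localized estimate to an unlocalized Cauchy property then requires an additional ingredient: Lemma~\ref{tainfini}, which shows that $\sup_n P[\tau^x_n\le T]\to 0$ as $x\to\infty$, uniformly in $n$. Only with both pieces can one conclude~(\ref{boundProba}) and hence convergence in probability in $\mathbb D([0,\infty),\ell_\lambda)$. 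The same localization is needed for the Feller property (so your claimed Lipschitz bound $\mathbb E[\delta_\lambda(M(m,t),M(\tilde m,t))]\le e^{Ct}\delta_\lambda(m,\tilde m)$ is not established either) and for part~\textit{\ref{ResultTheo_iii})}, where $\mathcal L_{K,F}^\beta\Phi$ is only bounded on $\{\|m\|_\lambda\le c\}$ (Lemma~\ref{LemmaLBounded}), forcing a \emph{local} martingale conclusion via the sequence $\tau^{x_n}$ rather than a direct limit in the Dynkin formula.
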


This result extends those of Fournier \cite{Sto-Coal} concerning solely coalescence and Bertoin \cite{Bertoin,Bertoin2} concerning only fragmentation.  We point out that in \cite{Bertoin} is not assumed $C_{\beta}^{\lambda}< \infty$ but only $\int_{\Theta}(1-\theta_1)\beta(d\theta) < \infty$. However, we believe that in the presence of coalescence our hypotheses on $\beta$ are optimal. \medskip

Theorem \ref{Result_Theo}. will be proved in two steps, the first step consists in proving existence and uniqueness of the Finite Coalescence-Fragmentation process, finite in the sense that it is composed by a finite number of particles for all $t\geq0$. Next, we will use a sequence of finite processes to build a process as its limit, where the system is composed by an infinite number of particles. The construction of such processes uses a Poissonian representation which is introduced in the next section.

\section{A Poisson-driven S.D.E.}\label{SDEIntro}
\setcounter{equation}{0}
We now introduce a representation of the stochastic processes of coagulation - fragmentation in terms of Poisson measures, in order to couple two of these processes with different initial data.
 
\begin{definition}\label{Def_Proc}  
Assume that a coagulation kernel $K$, a fragmentation kernel $F$ and a measure $\beta$ satisfy Hypotheses \ref{HypSto}.
\begin{enumerate}[a)]
\item For the coagulation, we consider a Poisson measure $\mathcal N(dt,d(i,j),dz)$ on $[0,\infty) \times \{(i,j) \in\mathbb N^2, i<j\} \times [0,\infty)$ with intensity measure  $dt$ $\left[\sum_{k<l} \delta_{(k,l)}(d(i,j)) \right]$ $dz$, and denote by $(\mathcal F_t)_{t\geq 0}$ the associated canonical filtration.
\item For the fragmentation, we consider  $\mathcal M(dt,di,d\theta,dz)$ a Poisson measure on $[0,\infty) \times \mathbb N \times \Theta \times [0,\infty)$ with intensity measure  $dt$ $\left(\sum_{k \geq 1} \delta_{k}(di) \right)$ $\beta(d\theta)$ $dz$, and denote by $(\mathcal G_t)_{t\geq 0}$ the associated canonical filtration. $\mathcal M$ is independent of $\mathcal N$.
\end{enumerate}
Finally, we consider $m \in \mathcal \ell_{\lambda}$. A c\`adl\`ag  $(\mathcal H_t)_{t\geq 0} = \left(\sigma(\mathcal F_t , \mathcal G_t)\right)_{t\geq 0}$-adapted process $(M(m,t))_{t\geq 0}$ is said to be a solution to $SDE(K,F,m,\mathcal N,\mathcal M)$ if it belongs a.s. to $\mathbb D\left([0,\infty),\ell_{\lambda}\right)$ and if for all $t\geq0$, a.s. 
\begin{eqnarray}\label{def_sto-process}
M(m,t) & = & m + \int_0^t \int_{i<j} \int_0^{\infty} \left[ c_{ij}\left(M(m,s-)\right) - M(m,s-)\right] \mathds 1_{\lbrace z \leq K(M_i(m,s-),M_j(m,s-))\rbrace}\nonumber\\
& & \hspace{9cm}\mathcal  N(ds,d(i,j),dz) \nonumber\\
&&+ \int_0^t \int_{i} \int_{\Theta} \int_0^{\infty} \left[ f_{i\theta}\left(M(m,s-)\right) - M(m,s-)\right] \mathds 1_{ \lbrace z \leq F(M_i(m,s-))\rbrace}\nonumber\\
& & \hspace{9cm}\mathcal M(ds,di,d\theta,dz).
\end{eqnarray}
\end{definition}
Remark that due to the independence of the Poisson measures only a coagulation or a fragmentation event occurs at each instant $t$.\medskip

\noindent We begin by checking that the integrals in (\ref{def_sto-process}) always make sense. 

\begin{lemma}\label{lemma_wellDef}
Let $\lambda\in(0,1]$ and $\alpha\geq 0$,  the coagulation kernel $K$ be bounded on compact subsets on $[0,\infty)^2$, the fragmentation kernel $F$ be bounded on compact subsets of $[0,\infty)$, and the $\beta$ and the Poisson measures $\mathcal N$ and $\mathcal M$ as in Definition \ref{Def_Proc}. For any $\left(\mathcal H_t\right)_{t\geq 0}$-adapted process $\left(M(t)\right)_{t\geq 0}$ belonging a.s. to $\mathbb D\left([0,\infty),\ell_{\lambda}\right)$, a.s.
\begin{eqnarray*}
I_1 & = & \int_0^t \int_{i<j} \int_0^{\infty} \left[ c_{ij}\left(M(s-)\right) - M(s-)\right] \mathds 1_{\lbrace z \leq K(M_i(s-),M_j(s-))\rbrace} \mathcal N(ds,d(i,j),dz),\\
I_2 & = & \int_0^t \int_{i} \int_{\Theta} \int_0^{\infty} \left[ f_{i\theta}\left(M(s-)\right) - M(s-)\right] \mathds 1_{ \lbrace z \leq F(M_i(s-))\rbrace}\mathcal M(ds,di,d\theta,dz),
\end{eqnarray*}
are well-defined and finite for all $t\geq 0$.
\end{lemma}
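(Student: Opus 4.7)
The plan is to reduce, by localisation, to time intervals on which $\|M(\cdot)\|_\lambda$ is bounded, and then to show on those intervals that the expected $\delta_\lambda$-total variation of each of $I_1$ and $I_2$ is finite. Concretely, introduce $\tau_R:=\inf\{s\geq 0:\|M(s)\|_\lambda\vee \|M(s-)\|_\lambda>R\}$; since $M$ is a.s.\ c\`adl\`ag in $(\ell_\lambda,\delta_\lambda)$ and $|\|m\|_\lambda-\|\tilde m\|_\lambda|\leq\delta_\lambda(m,\tilde m)$, the scalar process $s\mapsto\|M(s)\|_\lambda$ is itself c\`adl\`ag, hence a.s.\ bounded on each $[0,T]$, so $\tau_R\uparrow\infty$ a.s. On $[0,T\wedge\tau_R]$ every coordinate satisfies $M_k(s)\leq R^{1/\lambda}=:\rho_R$, so by local boundedness $K\leq K_R:=\sup_{[0,\rho_R]^2}K$ and $F\leq F_R:=\sup_{[0,\rho_R]}F$; moreover the decreasing ordering yields the crucial a priori bound $j\,M_j(s)^\lambda\leq\sum_{k\leq j}M_k(s)^\lambda\leq\|M(s)\|_\lambda\leq R$.

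The next ingredient is a sharp control of the $\delta_\lambda$-size of each single jump, obtained by an explicit (non-optimal) choice of pairing in the Wasserstein-type infimum defining $\delta_\lambda$. For coagulation I match $M_i$ with $M_i+M_j$ and $M_j$ with $0$; for fragmentation I match $M_i$ with $\theta_1 M_i$ and each $\theta_k M_i$ ($k\geq 2$) with $0$. This gives, for $j>i$,
\begin{equation*}
\delta_\lambda(c_{ij}(M),M)\;\leq\;(M_i+M_j)^\lambda-M_i^\lambda+M_j^\lambda\;\leq\;2M_j^\lambda
\end{equation*}
by subadditivity of $x\mapsto x^\lambda$, and
\begin{equation*}
\delta_\lambda(f_{i\theta}(M),M)\;\leq\;M_i^\lambda\bigl[(1-\theta_1^\lambda)+\sum_{k\geq 2}\theta_k^\lambda\bigr],
\end{equation*}
whose $\beta$-integral is bounded by $C_\beta^\lambda$ thanks to the inequalities recorded in (\ref{inebeta}). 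I also use the consequence $K(x,y)\leq\kappa_{\rho_R}\min(x^\lambda,y^\lambda)$ of the H\"older condition (\ref{Hyp_CoagK}) together with the convention $K(\cdot,0)=0$, valid on $[0,\rho_R]^2$.

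Plugging these bounds into the compensators of $I_1,I_2$ on $[0,T\wedge\tau_R]$ yields
\begin{equation*}
\mathbb E\Bigl[\sum_{\text{coag}\leq T\wedge\tau_R}\delta_\lambda(M(s),M(s-))\Bigr]\leq 2\kappa_{\rho_R}\,\mathbb E\!\int_0^{T\wedge\tau_R}\!\!\sum_{j}(j-1)M_j(s)^{2\lambda}\,ds\leq 2\kappa_{\rho_R}R^2 T,
\end{equation*}
where the inner double sum is controlled by $\sum_j(j-1)M_j^{2\lambda}\leq\|M\|_\lambda\sum_j M_j^\lambda=\|M\|_\lambda^2\leq R^2$ via the a priori bound $j\,M_j^\lambda\leq\|M\|_\lambda$, and, similarly,
\begin{equation*}
\mathbb E\Bigl[\sum_{\text{frag}\leq T\wedge\tau_R}\delta_\lambda(M(s),M(s-))\Bigr]\leq F_R\,C_\beta^\lambda\,\mathbb E\!\int_0^{T\wedge\tau_R}\!\!\|M(s)\|_\lambda\,ds\leq F_R\,C_\beta^\lambda\,R\,T.
\end{equation*}
Both upper bounds are finite, so the sums of jumps comprising $I_1$ and $I_2$ are a.s.\ absolutely convergent in $(\ell_\lambda,\delta_\lambda)$ on $[0,T\wedge\tau_R]$; sending $R\to\infty$ and then $T\to\infty$ yields the claim for all $t\geq 0$. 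The main obstacle is the coagulation contribution: the rate bound $K\leq K_R$ alone would be insufficient, because infinitely many pairs of non-zero coordinates may coexist even when $\|M\|_\lambda$ is small. The H\"older regularity of $K$ (from Hypothesis~\ref{HypSto}, in force via Definition~\ref{Def_Proc}) is used precisely to extract the factor $\min(x^\lambda,y^\lambda)$ which, combined with the ordering bound $j\,M_j^\lambda\leq\|M\|_\lambda$, makes the double sum summable; the fragmentation part, by contrast, only involves a single sum of $M_i^\lambda$ that telescopes to $\|M\|_\lambda$, so local boundedness of $F$ already suffices there.
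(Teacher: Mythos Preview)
Your argument contains a genuine gap in the coagulation estimate. You claim that the H\"older condition (\ref{Hyp_CoagK}) together with the convention $K(\cdot,0)=0$ yields $K(x,y)\leq\kappa_{\rho_R}\min(x^\lambda,y^\lambda)$ on $[0,\rho_R]^2$. This is false: the H\"older bound is stated only for arguments in $(0,a]$, while $K(\cdot,0)=0$ is a purely notational convention that the paper explicitly says is imposed ``even in the case where, e.g., $K\equiv 1$ on $(0,\infty)\times(0,\infty)$''. For $K\equiv 1$ the paper records $\kappa_a=0$, so your inequality would read $1\leq 0$. In other words, the convention $K(x,0)=0$ is \emph{not} the limit $K(x,0^+)$, and you cannot feed it into (\ref{Hyp_CoagK}). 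Consequently your control of $\sum_{i<j}K(M_i,M_j)\,\delta_\lambda(c_{ij}(M),M)$ collapses, and with only the uniform bound $K\leq K_R$ the remaining sum $\sum_j (j-1)M_j^\lambda$ need not be finite for $M\in\ell_\lambda$.

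The paper sidesteps this entirely by working coordinate-wise with the weighted distance $d(m,\tilde m)=\sum_k 2^{-k}|m_k-\tilde m_k|$ rather than with $\delta_\lambda$. The key estimate (\ref{d1c}) gives $d(c_{ij}(m),m)\leq\tfrac{3}{2}\,2^{-i}m_j$, so that $\sum_{i<j}d(c_{ij}(m),m)\leq\tfrac{3}{2}\|m\|_1$ is finite using only the \emph{local boundedness} of $K$; no H\"older regularity is needed here at all. The geometric factor $2^{-i}$ supplies the summability that your $\delta_\lambda$-approach lacks. Your treatment of the fragmentation term is correct and parallels the paper's, but to repair the coagulation part you should either switch to the distance $d$ as the paper does, or find a legitimate way to make the double sum over $(i,j)$ converge without the spurious bound on $K$.
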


\begin{proof}
The processes in the integral being c\`adl\`ag and adapted, it suffices to check the compensators are a.s. finite. We have to show that a.s., for all $k\geq 1$, all $t\geq 0$,
\begin{eqnarray*}
C_k(t) & = & \int_0^t ds \sum_{i<j} K(M_i(s),M_j(s)) |[c_{ij}(M(s))]_k - M_k(s)| \\
& & + \int_0^t ds\int_{\Theta}\beta(d\theta) \sum_{i\geq 1} F(M_i(s)) |[f_{i\theta}(M(s))]_k - M_k(s)| < \infty.
\end{eqnarray*}
Note first that for all $s \in[0,t]$, $\sup_i M_i(s) \leq \sup_{[0,t]} \|M(s)\|_1 \leq \sup_{[0,t]} \|M(s)\|^{1/\lambda}_{\lambda}=: a_t <\infty$ a.s. since $M$ belongs a.s. to $\mathbb D\left([0,\infty),\ell_{\lambda}\right)$. Next, let 
\begin{equation}\label{Koline}
\overline K_t = \sup_{(x,y) \in [0,a_t]^2}K(x,y) \hspace{1cm} \textrm{and} \hspace{1cm} \overline F_t = \sup_{x \in [0,a_t]}F(x),
\end{equation} 
which are $a.s.$ finite since $K$ and $F$ are bounded on every compact in $[0,\infty)^2$ and $[0,\infty)$, respectively. Then using (\ref{d1c}) and (\ref{d1f}) with (\ref{inebeta}) and (\ref{Clambdabounds}), we write:
\begin{eqnarray*}
\sum_{k\geq 1} 2^{-k} C_k(t) & = & \int_0^t ds \sum_{i<j} K(M_i(s),M_j(s))\, d\left(c_{ij}(M(s)), M(s)\right) \\
& & + \int_0^t ds\int_{\Theta}\beta(d\theta) \sum_{i\geq 1} F(M_i(s))\, d\left(f_{i\theta}(M(s)), M(s)\right)  \\
& \leq & \overline K_t \int_0^t ds \sum_{i<j}  \frac{3}{2}2^{-i}M_j(s) +  C^{\lambda}_{\beta} \overline{F}_t \int_0^t ds  \sum_{i\geq1} 2^{-i} M_i(s)\\ 
& \leq &  \left( \frac{3}{2}\overline K_t + C^{\lambda}_{\beta} \overline{F}_t\right) \int_0^t \|M(s)\|_1 ds \\
&\leq &  t \left( \frac{3}{2}\overline K_t + C^{\lambda}_{\beta} \overline{F}_t\right) \sup_{[0,t]} \|M(s)\|_{\lambda}^{1/\lambda}  \,\,\,< \,\,\, \infty.
\end{eqnarray*}
\end{proof}

\subsection{Existence and uniqueness for \textit{SDE}: finite case}\label{EUFcase}
The aim of this paragraph is to prove Proposition \ref{Def_finiteproc}. This proposition is a consequence of Proposition \ref{Finite_PoissonProc}. bellow. We will first prove existence and uniqueness of the Finite Coalescence - Fragmentation processes satisfiying $(SDE)$ and then some fundamental inequalities. 

\begin{proposition}\label{Finite_PoissonProc}
Let $m\in\ell_{0+}$. Consider  a coagulation kernel $K$ bounded on compact subsets of $[0,\infty)^2$, a fragmentation kernel $F$ bounded on compact subsets of $[0,\infty)$ and a measure $\beta$ and the Poisson measures $\mathcal N$ and $\mathcal M$ as  in Definition \ref{Def_Proc}, suppose furthermore that $\beta$ satisfies (\ref{Hypfinite_beta}).\medskip

\noindent Then there exists a unique process $(M(m,t))_{t\geq 0}$ which solves $SDE(K,F,m,\mathcal N,\mathcal M)$. This process is a finite Coalescence-Fragmentation process in the sense of Proposition \ref{Def_finiteproc}.
\end{proposition}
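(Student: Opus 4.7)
The plan is to construct the unique solution pathwise by induction on successive jump times. Set $T_0 = 0$ and suppose $M(m, T_n)$ has been defined and lies in $\ell_{0+}$ with $N_n < \infty$ positive components. Coalescence preserves total mass and fragmentation can only lose mass by (\ref{Hyp1_Beta}), so $\|M(m, t)\|_1 \leq \|m\|_1 =: A$ for all $t \geq 0$. Consequently the jump rates are uniformly bounded by $\bar K := \sup_{[0,A]^2} K < \infty$ and $\bar F := \sup_{[0,A]} F < \infty$. Under (\ref{Hypfinite_beta}), the restrictions of $\mathcal N$ to $\{i<j\leq N_n,\, z \leq \bar K\}$ and of $\mathcal M$ to $\{i \leq N_n,\, \theta \in \Theta,\, z \leq \bar F\}$ have finite total intensity. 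One can therefore let $T_{n+1}$ be the time of the first such atom after $T_n$, keep $M(m, t) = M(m, T_n)$ on $[T_n, T_{n+1})$, and at $T_{n+1}$ apply $c_{ij}$ or $f_{i\theta}$ according to which Poisson measure the atom belongs to. Because $\beta$ is supported on $\Theta_k$, each fragmentation produces at most $k$ children, so $M(m, T_{n+1}) \in \ell_{0+}$ and the induction continues.

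The main obstacle is ruling out an explosion $T_\infty := \sup_n T_n < \infty$. Let $N_t$ be the number of strictly positive components of $M(m, t)$. A coalescence decreases $N_t$ by one, while a fragmentation increases it by at most $k-1$, occurring at aggregate rate bounded by $N_t \bar F \beta(\Theta) =: \bar C \, N_t$. Thus $(N_t)$ is stochastically dominated by a linear pure-birth process with per-particle birth rate $\bar C$ and jump amplitude $k-1$, a Yule-type process which does not explode in finite time. Hence $T_n \to +\infty$ almost surely, and the iterative scheme defines a c\`adl\`ag process $M(m, \cdot)$ on all of $[0,\infty)$. Lemma \ref{lemma_wellDef} guarantees that the stochastic integrals in (\ref{def_sto-process}) are well defined, and the process produced this way satisfies (\ref{def_sto-process}) by inspection.

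Uniqueness is essentially built into the recursive scheme: any other solution of (\ref{def_sto-process}) must also be constant between successive atoms whose indicator constraints are activated, and must jump at such atoms via the prescribed maps $c_{ij}$ or $f_{i\theta}$. Since these constraints only involve the left limit $M(m, s-)$, a pathwise induction on jump times shows the solution is entirely determined by $\mathcal N$, $\mathcal M$ and the initial datum, hence coincides with the one constructed above.

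It remains to identify this SDE-solution with the Markov process of Proposition \ref{Def_finiteproc}. The strong Markov property is inherited from the strong Markov property of the Poisson random measures: $M(m, t)$ is a measurable functional of $m$ and the restrictions of $\mathcal N, \mathcal M$ to $[0,t]$, and the Poisson measures restart afresh after any stopping time. Applying It\^o's formula for Poisson stochastic integrals to $\Phi(M(m,t))$ with $\Phi$ bounded measurable, and using Lemma \ref{lemma_wellDef} to control the compensators, the predictable drift identifies precisely as $\mathcal L_{K,F}^{\beta} \Phi$ from (\ref{generator}). Since a pure-jump Markov process with bounded jump rates is characterized in law by its generator, this shows $M(m, \cdot)$ is the finite coalescence-fragmentation process of Proposition \ref{Def_finiteproc}.
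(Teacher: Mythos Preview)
Your argument is correct and follows essentially the same route as the paper: build the solution inductively on jump times, observe that total mass is nonincreasing so all rates live under the uniform caps $\bar K$, $\bar F$, and rule out explosion by controlling the particle count $N_t$ via the fact that each fragmentation adds at most $k-1$ particles at per-particle rate $\bar F\,\beta(\Theta)$.

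The only real difference is cosmetic: for non-explosion the paper applies the generator to $\Phi(m)=\sum_n \mathds 1_{\{m_n>0\}}$ and uses Gronwall to get $\mathbb E\big[\sup_{[0,t\wedge T_\infty)} N_s\big]\le N_0\,e^{(k-1)\bar F\beta(\Theta)t}$, then argues by contradiction that $T_\infty<\infty$ would force bounded jump intensities on $[0,T_\infty)$; you instead invoke stochastic domination by a Yule-type process. Both yield the same exponential bound and the same conclusion. One small imprecision in your write-up: at the atom $T_{n+1}$ you should apply $c_{ij}$ (resp.\ $f_{i\theta}$) only when the $z$-mark actually satisfies $z\le K(M_i(T_n),M_j(T_n))$ (resp.\ $z\le F(M_i(T_n))$), not merely $z\le \bar K$; otherwise the process is unchanged at that atom. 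This does not affect the argument since the restricted Poisson measures still have finite intensity and the enumeration of candidate jump times goes through.
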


We recall that in order to prove Proposition \ref{Finite_PoissonProc}. the kernels $K$ and $F$ do not need to satisfy the continuity conditions (\ref{Hyp_CoagK}) and (\ref{Hyp_FragK}), we need only to assume local boundness to prove that the jump intensity is bounded on finite time-intervals. The continuity conditions on both kernels are needed, in general, when considering an infinite number of particles in the system and in particular, to control the distance $\delta_{\lambda}$ between two solutions to \textit{SDE} Proposition \ref{Result_Prop}. \textit{\ref{Prop_C01_ii})} below.

\subsubsection{A Gronwall type inequality}
We will also check a fundamental inequality, which shows that the distance between two coagulation-fragmentation processes introduced in Proposition \ref{Finite_PoissonProc}. cannot increase excessively while their moments of order $\lambda$ remain finite. For this, we need to consider the additional continuity conditions (\ref{Hyp_CoagK}) and (\ref{Hyp_FragK}).
\begin{proposition}\label{Result_Prop}
Let $\lambda\in(0,1]$, $\alpha\geq 0$ and $m, \tilde m\in\ell_{0+}$. Consider $K$, $F$, $\beta$ and the Poisson measures $\mathcal N$ and $\mathcal M$ as in Definition \ref{Def_Proc}, we furthermore suppose that $\beta$ satisfies (\ref{Hypfinite_beta}). Consider the unique solutions $M(m,t)$ and $M(\tilde m,t)$ to $SDE(K,F,m,\mathcal N,\mathcal M)$ and $SDE(K,F,\tilde m,\mathcal N,\mathcal M)$ constructed in Proposition \ref{Finite_PoissonProc}. and recall $C^{\lambda}_{\beta}$ (\ref{Hyp2_Beta}).
\begin{enumerate}[i)]
\item \label{Prop_C01_i} The map $t\mapsto \|M(m,t)\|_1$ is a.s. non-increasing. Futhermore, for all $t\geq0$ 
\begin{equation}
\mathbb E\left[\sup_{s\in[0,t]}\|M(m,s)\|_{\lambda}\right] \leq \|m\|_{\lambda}\, e^{\overline{F}_m C^{\lambda}_{\beta}\,\, t},\nonumber
\end{equation}
where $\overline{F}_m = \sup_{[0,\|m\|_1]}F(x)$.
\item \label{Prop_C01_ii} We define, for all $x>0$, the stopping time $\tau(m,x) = \inf\{t\geq0, \|M(m,t)\|_{\lambda} \geq x\}$. Then for all $t\geq0$ and all $x>0$,  
\begin{equation}
\mathbb E\left[\sup_{s\in[0,t\wedge \tau(m,x) \wedge \tau(\tilde m,x)]}  \delta_{\lambda}\left(M(m,s), M(\tilde m,s) \right) \right] \leq \delta_{\lambda}\left(m, \tilde{m} \right) e^{C (x+ 1)\,t}. \nonumber
\end{equation}
where $C$ is a positive constant depending on $K$, $F$, $C^{\lambda}_{\beta}$, $\|m\|_{1}$ and $\|\tilde m\|_{1}$.
\end{enumerate}
\end{proposition}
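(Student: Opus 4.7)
The plan is to establish both parts by applying the compensation formula for Poisson integrals to suitable non-negative functionals of the solution(s), bounding the integrand by a quantity of the same type, and invoking Gronwall.

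\textbf{Part (i).} Reading (\ref{def_sto-process}) directly, each jump of $\mathcal N$ affecting $M(m,\cdot)$ conserves the total mass ($m_i+m_j$ replaces $m_i,m_j$), while each jump of $\mathcal M$ sends $m_i$ to $m_i\sum_k\theta_k \le m_i$ by (\ref{Hyp1_Beta}); hence $t\mapsto \|M(m,t)\|_1$ is a.s. non-increasing, every component stays in $[0,\|m\|_1]$, and $F(M_i(m,\cdot)) \le \overline F_m$ throughout. For the $\lambda$-moment, I would apply the jump decomposition to $\|M(m,t)\|_\lambda$. Coagulation jumps contribute $(M_i+M_j)^\lambda - M_i^\lambda - M_j^\lambda \le 0$ by subadditivity of $x\mapsto x^\lambda$, so they are pathwise non-positive. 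Fragmentation jumps contribute $M_i(s-)^\lambda(\sum_k\theta_k^\lambda - 1)$, whose positive part is $\beta$-integrable with bound $C_\beta^\lambda\, M_i(s-)^\lambda$ by the third inequality in (\ref{Clambdabounds}). Discarding the non-positive coagulation terms and the non-positive part of the fragmentation terms gives a pathwise majoration of $\sup_{s\le t}\|M(m,s)\|_\lambda$ by $\|m\|_\lambda$ plus an integral against $\mathcal M$; taking expectations and compensating produces $\mathbb E[\sup_{s\le t}\|M(m,s)\|_\lambda] \le \|m\|_\lambda + \overline F_m C_\beta^\lambda \int_0^t \mathbb E[\sup_{r\le s}\|M(m,r)\|_\lambda]\,ds$, and Gronwall concludes.

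\textbf{Part (ii).} Using (\ref{def_distance}), set
\begin{equation*}
U(t) := \delta_\lambda\bigl(M(m,t),M(\tilde m,t)\bigr) = \sum_{k\ge 1}\bigl|M_k(m,t)^\lambda - M_k(\tilde m,t)^\lambda\bigr|,
\end{equation*}
and compute its jump decomposition. Since both processes are driven by the same Poisson measures, at a coagulation atom $(s,(i,j),z)$ the $z$-range splits into the common set $\{z\le K(M_i,M_j)\wedge K(\tilde M_i,\tilde M_j)\}$ and the asymmetric complement of Lebesgue length $|K(M_i,M_j) - K(\tilde M_i,\tilde M_j)|$, bounded via (\ref{Hyp_CoagK}) by $\kappa_a(|M_i^\lambda - \tilde M_i^\lambda| + |M_j^\lambda - \tilde M_j^\lambda|)$; a similar split based on $F(M_i)\wedge F(\tilde M_i)$ and (\ref{Hyp_FragK}) handles fragmentations. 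In the common coagulation case, the inf-formulation (\ref{def_distance}) allows me to match the new particle $M_i + M_j$ with $\tilde M_i + \tilde M_j$, so the change of $U$ is at most $|(M_i+M_j)^\lambda - (\tilde M_i+\tilde M_j)^\lambda| - |M_i^\lambda - \tilde M_i^\lambda| - |M_j^\lambda - \tilde M_j^\lambda|\le 0$, which follows by telescoping through $(M_i,M_j)\to(\tilde M_i,M_j)\to(\tilde M_i,\tilde M_j)$ together with the pointwise inequality $|(a+u)^\lambda - (b+u)^\lambda| \le |a^\lambda - b^\lambda|$, valid for $\lambda\in(0,1]$ by concavity. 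In the common fragmentation case, the corresponding computation yields $|M_i^\lambda - \tilde M_i^\lambda|(\sum_k\theta_k^\lambda - 1)$, whose positive part integrates against $\beta$ to at most $C_\beta^\lambda |M_i^\lambda - \tilde M_i^\lambda|$. In the two asymmetric cases, where only one process jumps, the change of $U$ is crudely bounded by $M_i^\lambda + M_j^\lambda + \tilde M_i^\lambda + \tilde M_j^\lambda$ (coagulation) and by a $C_\beta^\lambda$-integrable fragmentation analogue obtained from (\ref{Clambdabounds}).

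Up to $t\wedge\tau(m,x)\wedge\tau(\tilde m,x)$ every component is dominated by $a:=x^{1/\lambda}$, so $\overline K_a,\overline F_a,\kappa_a,\mu_a$ are deterministic, and $|M_i^\alpha - \tilde M_i^\alpha|$ can be converted into a multiple of the summand of $U$ by elementary calculus on the compact $[0,a]$. Summing the four contributions over $i,j$ using $\sum_i M_i^\lambda,\sum_i\tilde M_i^\lambda \le x$ and $\sum_i M_i\le\|m\|_1$, the compensator integrand is bounded by $C(x+1)\,U(s)$ with $C=C(K,F,C_\beta^\lambda,\|m\|_1,\|\tilde m\|_1)$. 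Writing the corresponding pathwise pre-compensation inequality for $\sup_{s\le t\wedge\tau\wedge\tilde\tau}U(s)$ and taking expectations yields $\phi(t) \le \delta_\lambda(m,\tilde m) + C(x+1)\int_0^t\phi(s)\,ds$ for $\phi(t) := \mathbb E[\sup_{s\le t\wedge\tau\wedge\tilde\tau}U(s)]$, and Gronwall closes the bound. \textbf{The principal obstacle} is the bookkeeping at common coagulations, where reordering can place the merged particle at different positions in the two decreasing sequences; the inf-formulation of $\delta_\lambda$ sidesteps this by allowing an ad hoc matching that pairs the merged particles and leaves the remainder of the matching intact. A secondary technical point is the fragmentation asymmetric case with $\sum_k \theta_k^\lambda > 1$, handled through (\ref{Clambdabounds}).
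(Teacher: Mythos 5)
Your overall architecture coincides with the paper's: both parts are handled by decomposing the quantity of interest along the atoms of the driving Poisson measures, bounding the compensator by a constant times the quantity itself (up to the stopping time), and invoking Gronwall. Part (i) is fine. Part (ii) follows the paper's decomposition into common jumps (where both processes move) and asymmetric jumps (where only one moves), and your telescoping argument for the common-coagulation term correctly reproves the paper's inequality \eqref{C3_01}, $\delta_\lambda(c_{ij}(m),c_{ij}(\tilde m))\le \delta_\lambda(m,\tilde m)$, so that term is non-positive.

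There is, however, a genuine gap in your treatment of the asymmetric coagulation term. You bound the change of $U$ in that case by $M_i^\lambda + M_j^\lambda + \tilde M_i^\lambda + \tilde M_j^\lambda$. That crude bound does not close the Gronwall estimate, because the compensator involves the double sum
\[
\sum_{i<j}\bigl|K(M_i,M_j)-K(\tilde M_i,\tilde M_j)\bigr|\bigl(M_i^\lambda+M_j^\lambda+\tilde M_i^\lambda+\tilde M_j^\lambda\bigr)
\le \kappa_a\sum_{i<j}\bigl(|M_i^\lambda-\tilde M_i^\lambda|+|M_j^\lambda-\tilde M_j^\lambda|\bigr)\bigl(M_i^\lambda+M_j^\lambda+\tilde M_i^\lambda+\tilde M_j^\lambda\bigr),
\]
and the "diagonal" cross-terms such as $\sum_{i<j}|M_i^\lambda-\tilde M_i^\lambda|\,M_i^\lambda$ and $\sum_{i<j}|M_i^\lambda-\tilde M_i^\lambda|\,\tilde M_i^\lambda$ involve, for each fixed $i$, a multiplicity equal to the number of indices $j>i$ carrying a non-zero particle; that count is a.s.\ finite but random and unbounded in $t$, so the compensator is not bounded by a deterministic $C(x+1)\,U(s)$. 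What rescues the estimate in the paper is the sharper inequality \eqref{C2_01}, $\delta_\lambda(c_{ij}(m),m)\le 2m_j^\lambda$, where the bound is in terms of the \emph{smaller} of the two merging masses. With $2M_j^\lambda$ in place of your crude bound, every resulting term is of the form $\sum_i|M_i^\lambda-\tilde M_i^\lambda|\sum_{j>i}M_j^\lambda$ or $\sum_j|M_j^\lambda-\tilde M_j^\lambda|(j-1)M_j^\lambda$, and the second is controlled by the decreasing ordering via $(j-1)M_j^\lambda\le \sum_{i<j}M_i^\lambda\le\|M\|_\lambda$. Both sums are then bounded by $\|M\|_\lambda\,U(s)\le x\,U(s)$, yielding the needed deterministic constant. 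You should replace your crude asymmetric-coagulation bound by $2M_j^\lambda$ (resp.\ $2\tilde M_j^\lambda$) and carry out the reindexing trick above; with that fix the proof closes exactly as in the paper.
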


This proposition will be useful to construct a process in the sense of Definition \ref{Def_Proc}. as the limit of a sequence of approximations. It will provide some important uniform bounds not depending on the approximations but only on the initial conditions and $C_{\beta}^{\lambda}$.\medskip

\subsubsection{Proofs}\label{Proof}
In this section we provide proofs to propositions \ref{Finite_PoissonProc}., \ref{Def_finiteproc}. and \ref{Result_Prop}.

\begin{proof}[Proof of Proposition \ref{Finite_PoissonProc}]

This proposition will be proved considering that in such a system the number of particles remains finite. We will conclude using the fact that the total rate of jumps of the system is bounded by the number of particles. 

\begin{lemma}\label{finite_intensities} 
Let $m\in\ell_{0+}$, consider  a coagulation kernel $K$ bounded on compact subsets on $[0,\infty)^2$, a fragmentation kernel $F$ bounded on compact subsets of $[0,\infty)$, and $\beta$ and the Poisson measures $\mathcal N$ and $\mathcal M$ as in Definition \ref{Def_Proc}. and assume that $\beta$ satisfies (\ref{Hypfinite_beta}). Assume that there exists $(M(m,t))_{t\geq 0}$ solution  to $SDE(K,F,m,\mathcal N,\mathcal M)$.
\begin{enumerate}[i)]
\item\label{ilemma} The number of particles in the system remains $a.s.$ bounded on finite time-intervals,
\begin{equation}
\sup_{s\in[0,t]} N_s < \infty,\,\,a.s. \,\,\textrm{for all } \,\,t\geq 0,\nonumber
\end{equation}
where $N_t = card \{M_i(m,t) : M_i(m,t) > 0  \} = \sum_{i\geq 1} \mathds 1_{\{M_i(m,t) >0 \}}$.
\item\label{iilemma}  The coalescence and fragmentation jump rates of the process $(M(m,t))_{t\geq 0}$ are $a.s.$ bounded on finite time-intervals, this is
\begin{equation}
\sup_{s\in[0,t]}\left(\rho_c(s)+\rho_f(s)\right)  < \infty, \,\,a.s. \,\,\textrm{for all } \,\,t\geq 0, \nonumber
\end{equation} 
where $\rho_c(t) := \sum_{i<j} K(M_i(m,t),M_j(m,t))$ and $\rho_f(t) := \beta(\Theta) \sum_{i\geq 1} F(M_i(m,t))$.
\end{enumerate}
\end{lemma}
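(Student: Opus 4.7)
\medskip
\noindent\textbf{Proof plan.} The plan is to first show that every particle mass stays uniformly bounded along the trajectory, then to use this to prevent the total number of particles from exploding in finite time, and finally to deduce boundedness of the jump intensities.

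I would begin by noting that the total mass is non-increasing along the dynamics: coalescence preserves it, while (\ref{Hyp1_Beta}) ensures that each fragmentation event replaces a particle of mass $x$ by particles of total mass $x\sum_{k\geq 1}\theta_k \leq x$. Hence $\|M(m,t)\|_1 \leq \|m\|_1$ a.s., so every coordinate $M_i(m,t)$ lies in $[0,\|m\|_1]$. Setting $\overline{K} := \sup_{(x,y)\in[0,\|m\|_1]^2} K(x,y)$ and $\overline{F} := \sup_{x\in[0,\|m\|_1]} F(x)$, which are finite by the local boundedness of $K$ and $F$, the coagulation rate between any two particles is at most $\overline{K}$ and the fragmentation rate of any particle is at most $\overline{F}\beta(\Theta)$, recalling $\beta(\Theta)<\infty$ by (\ref{Hypfinite_beta}).

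For the first assertion, I would track the evolution of $N_t$. A coalescence jump reduces $N_t$ by one, while by (\ref{Hypfinite_beta}) a fragmentation event produces at most $k$ positive children and so increases $N_t$ by at most $k-1$. Writing $J_t$ for the total number of fragmentation jumps up to time $t$, this gives $N_t \leq N_0 + (k-1)J_t$ a.s.\ for all $t \geq 0$. To control $J_t$, introduce the stopping times $\tau_n := \inf\{t \geq 0 : J_t = n\}$ and remark that on $[\tau_{n-1},\tau_n)$ the fragmentation intensity is bounded above by $\overline{F}\beta(\Theta)\,\bigl[N_0+(k-1)(n-1)\bigr]$. Thus $\tau_n - \tau_{n-1}$ stochastically dominates an exponential variable of parameter $\overline{F}\beta(\Theta)\,[N_0+(k-1)(n-1)]$, and since
\begin{equation*}
\sum_{n\geq 1}\frac{1}{\overline{F}\beta(\Theta)\bigl[N_0+(k-1)(n-1)\bigr]} = \infty,
\end{equation*}
the standard non-explosion criterion for pure-birth chains yields $\tau_n \to \infty$ a.s. Consequently $\sup_{s\in[0,t]} N_s \leq N_0 + (k-1)J_t < \infty$ a.s.\ for every $t\geq 0$.

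The second assertion is then immediate: combining the uniform bounds $\overline{K}, \overline{F}$ with the a.s.\ finiteness of $\sup_{[0,t]} N_s$,
\begin{equation*}
\rho_c(s) \,\leq\, \binom{N_s}{2}\overline{K}, \qquad \rho_f(s) \,\leq\, \overline{F}\,\beta(\Theta)\,N_s,
\end{equation*}
both of which are a.s.\ bounded on $[0,t]$. The main technical point is the non-explosion argument for $J_t$ in the preceding paragraph, as this is the only step requiring anything beyond straightforward inequalities; the rest is routine bookkeeping. An alternative would be to dominate $N_t$ directly by the Yule-type chain that jumps from $n$ to $n+k-1$ at rate $n\overline{F}\beta(\Theta)$ and invoke non-explosion of that chain, but the stopping-time route above seems the most self-contained.
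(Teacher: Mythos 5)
Your proof is correct, and it takes a genuinely different route from the paper for the key non-explosion step. Both proofs share the same preliminary observations: mass is non-increasing, so all masses stay in $[0,\|m\|_1]$, giving uniform bounds $\overline K,\overline F$, and the particle count satisfies $N_t\leq N_0+(k-1)L^f(t)$ with $L^f(t)$ the number of fragmentation events. Where they diverge is in how they show $L^f(t)<\infty$. The paper computes a generator bound $\mathcal L_{K,F}^{\beta}\Phi(m)\leq (k-1)\overline F_m\beta(\Theta)\Phi(m)$ for $\Phi(m)=\sum_n\mathds 1_{\{m_n>0\}}$, applies Gronwall to obtain $\mathbb E\big[\sup_{[0,t\wedge T_\infty)}N_s\big]\leq N_0\,e^{(k-1)\overline F_m\beta(\Theta)t}$, and then rules out $T_\infty<\infty$ by contradiction (bounded jump intensity on $[0,T_\infty)$ is incompatible with explosion at $T_\infty$). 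You instead introduce the fragmentation-jump times $\tau_n$, bound the fragmentation intensity on $[\tau_{n-1},\tau_n)$ by $\overline F\beta(\Theta)[N_0+(k-1)(n-1)]$, and invoke the classical non-explosion criterion for pure-birth chains via the divergence of $\sum_n 1/\lambda_n$. This is a cleaner, purely probabilistic argument that avoids the expectation/Gronwall machinery and the slightly awkward $T_\infty$ bookkeeping of the paper's version. One small caveat: the stochastic domination of $\tau_n-\tau_{n-1}$ by $\mathrm{Exp}(\lambda_{n-1})$ should be stated conditionally on $\mathcal F_{\tau_{n-1}}$ (or via an explicit coupling that thins $\mathcal M$), so that the dominating exponentials can be taken independent and the criterion $\sum 1/\lambda_n=\infty\Rightarrow\sum E_n=\infty$ a.s.\ applies; you gesture at this by mentioning the dominating Yule-type chain, which is exactly the comparison needed. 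With that made explicit, your argument is complete and matches the paper's conclusion.
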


\begin{proof} 
First, denoting $\overline K_m := \sup_{[0,\|m\|_1]^2}K(x,y)$ and $\overline F_m := \sup_{[0,\|m\|_1]}F(x)$, note that we have $\rho_c(0) \leq \overline K_m N_0^2$ and $\rho_f(0)\leq \beta(\Theta) \overline F_m N_0$, which shows that the initial total jump intensity of the system is finite and that the first jump time is strictly positive $T_1>0$. We can thus prove by recurrence that there exists a sequence $0<T_1<\ldots<T_j<\ldots<T_{\infty}$ of jumping times with $T_{\infty} = \lim_{j\rightarrow\infty}T_j$. We now prove that $T_{\infty} = \infty$.\medskip

Let $L^f(t) := card\{j\geq1 : T_j\leq t\,\,\textrm{and}\,\,T_j\,\,\textrm{is a jump of}\,\, M\}$ be the number of fragmentations in the system until the instant $t\geq 0$. Recall that the measure $\beta$ satisfies (\ref{Hypfinite_beta}), since $k$ is the maximum number of fragments, it is easy to see that
\begin{equation}
N_t\leq N_0 + (k-1)L^f(t) <\infty\,\,a.s.,\,\,\textrm{for all}\, t<T_{\infty}.\nonumber
\end{equation}
Applying now (\ref{generator}) with $\Psi(m) = \sum_{n \geq 1} m_n$ and since that $\Psi(c_{ij}(m))-\Psi(m) = 0$ and $\Psi(f_{i\theta}(m))-\Psi(m) = m_i\left(\sum_{i=1}^k \theta_i - 1\right)  \leq 0,\,\beta-a.e.$, we obtain
\begin{equation}
\sup_{s\in[0,t]}\|M(m,s)\|_1  \leq \|m\|_1, a.s.,\,\,\textrm{for all}\,\, t<T_{\infty},\nonumber
\end{equation} 
which implies, $a.s.$ for all $t<T_{\infty}$,
\begin{equation}\label{intensityTinf}
\left\lbrace
\begin{array}{lcl}
\rho_c(t) & \leq & \overline K_m N_{t-}^2,\\
\rho_f(t) & \leq & \beta(\Theta) \overline F_m N_{t-}.
\end{array} \right.
\end{equation}

Next, define $\Phi(m) = \sum_{n\geq 1} \mathds 1_{\{m_n > 0\}}$, recall (\ref{generator}) and use $\Phi(c_{ij}(m))-\Phi(m) \leq 0$, to obtain  
\begin{eqnarray*}
\mathcal L_{K,F}^{\beta} \Phi(m) & \leq & 
\sum_{i\geq 1} \int_{\Theta} F(m_i) \left[ \Phi\left(f_{i\theta}(m)\right) - \Phi(m) \right] \beta(d\theta)\\\\
&\leq & \overline{F}_m\,  \sum_{i\geq 1} \int_{\Theta}  \left[\sum_{n \geq 1} \mathds 1_{\{\theta_n m_i > 0\}}-\mathds 1_{\{m_i > 0\}} \right] \beta(d\theta)\\
& \leq & (k-1)\,\overline{F}_m\, \beta(\Theta) \Phi(m),
\end{eqnarray*}
we used $\theta_j m_i = 0$ for all $j\geq k+1$.\medskip

\noindent Hence, we have for all $t\geq0$,
\begin{eqnarray*}
\mathbb E\left[\sup_{s\in[0,t\wedge T_{\infty})}N_{s} \right] & \leq & N_0 + (k-1)\,\overline{F}_m\, \beta(\Theta) \mathbb E\left[\int_0^{t\wedge T_{\infty}} N_{s-}ds\right]\\
&\leq & N_0 + (k-1)\,\overline{F}_m\, \beta(\Theta) \int_0^t\mathbb  E\left[\sup_{u\in[0,s\wedge T_{\infty})} N_{u}\right]du.
\end{eqnarray*}
We use the Gronwall Lemma to obtain
\begin{eqnarray*}
\mathbb E\left[\sup_{s\in[0,t\wedge T_{\infty})} N_{s} \right] & \leq &  N_0\, e^{(k-1)\,\overline{F}_m\,\beta(\Theta) t},
\end{eqnarray*}
for all $t\geq0$. We thus deduce,
\begin{equation}\label{Nfinias}
\sup_{s\in[0,t\wedge T_{\infty})} N_{s} < \infty,\,\,a.s.,
\end{equation}
for all $t\geq0$. \medskip

Suppose now that $T_{\infty}<\infty$, then from (\ref{Nfinias}) we deduce that $\sup_{t\in[0,T_{\infty})} N_{t} < \infty,\,\,a.s.$. which means that, using  (\ref{intensityTinf}), $\sup_{t\in[0,T_{\infty})} (\rho_c(t)+\rho_f(t)) < \infty,\,\,a.s$. This is in contradiction with $T_{\infty}<\infty$ since the total jump intensity necessarily explodes to infinity on $T_{\infty}$ when $T_{\infty}<\infty$.\medskip

We deduce that,  
\begin{eqnarray*}
\mathbb E\left[\sup_{s\in[0,t]} N_{s} \right] & \leq &  N_0\, e^{(k-1)\,\overline{F}_m\,\beta(\Theta) t},
\end{eqnarray*}
for all $t\geq0$, and \textit{\ref{ilemma})} readily follows. Finally, \textit{\ref{iilemma})} follows easily from \textit{\ref{ilemma})} and (\ref{intensityTinf}). \medskip

\noindent This ends the proof of Lemma \ref{finite_intensities}. 
\end{proof}
From Lemma \ref{finite_intensities}. we deduce that the total rate of jumps of the system is uniformly bounded. Thus, pathwise existence and uniqueness holds for $(M(m,t))_{t\geq 0}$ solution to $SDE(K,F,m,\mathcal N,\mathcal M)$.\medskip

This ends the proof of Proposition \ref{Finite_PoissonProc}.
\end{proof}

\begin{proof}[Proof of Proposition \ref{Def_finiteproc}]
Let $\lambda\in(0,1]$, $\alpha\geq 0$ and $m \in \ell_{0+}$, and consider $K$, $F$, $\beta$ and the Poisson measures $\mathcal N$ and $\mathcal M$ as in Proposition \ref{Def_finiteproc}.\medskip

Consider the process $(M(m,t))_{t\geq 0}$, the unique solution to $SDE(K,F,m,\mathcal N,\mathcal M)$ built in Proposition \ref{Finite_PoissonProc}. The system $(M(m,t))_{t\geq 0}$ is a strong Markov process in continuous time with infinitesimal generator  $\mathcal L_{K,F}^{\beta}$ and Proposition \ref{Def_finiteproc}. follows.
\end{proof}

\begin{proof}[Proof of Proposition \ref{Result_Prop}]

Let $\lambda\in(0,1]$, $\alpha\geq 0$ and $m \in \ell_{0+}$, and consider $(M(m,t))_{t\geq0}$ the solution to $SDE(K,F,m,\mathcal N,\mathcal M)$ constructed in Proposition \ref{Finite_PoissonProc}. We begin studying the behavior of the moments of this solution.  \medskip

First, we will see that under our assumptions the total mass $\|\cdot\|_1$ does $a.s.$ not increase in time. This property is fundamental in this approach since we will use the bound $\sup_{[0,\|M(m,0)\|_1]}F(x)$, which is finite  whenever $\|M(m,0)\|_{\lambda}$ is. This will allows us to bound lower moments of $M(m,t)$ for $t\geq 0$. \medskip

Next, we will prove that the $\lambda$-moment remains finite in time. Finally, we will show that the distance $\delta_{\lambda}$ between two solutions to (\ref{def_sto-process}) is bounded in time while their $\lambda$-moments remain finite.  \medskip

We point out that in these paragraphs we will use more general estimates for $m\in\ell_{\lambda}$ and $\beta$ satisfying Hypotheses \ref{HypSto}. and not necessarily (\ref{Hypfinite_beta}). This will provide uniform bound when dealing with finite processes.\medskip

\noindent \textbf{Moments Estimates.- }The aim of this paragraph is to prove \textit{Proposition \ref{Result_Prop}. \ref{Prop_C01_i})}.\medskip 

\noindent The solution to $SDE(K,F,m,\mathcal N,\mathcal M)$ will be written $M(t) := M(m,t)$ for simplicity. From Lemma \ref{finite_intensities}. \textit{\ref{ilemma})}, we know that the number of particles in the system is $a.s.$ finite and thus the following sums are obviously well-defined.\medskip 

First, from (\ref{def_sto-process}) we have for $k\geq 1$,

\begin{eqnarray}\label{Proof:Mk}
M_k(t) & = & M_k(0) + \int_0^t \int_{i<j} \int_0^{\infty} \left[ [c_{ij}\left(M(s-)\right)]_k - M_k(s-)\right] \mathds 1_{\lbrace z \leq K(M_i(s-),M_j(s-))\rbrace} \nonumber\\ 
&&\hspace{9.5cm}\mathcal N(ds,d(i,j),dz) \nonumber\\
&&+ \int_0^t \int_{i} \int_{\Theta} \int_0^{\infty} \left[ [f_{i\theta}\left(M(s-)\right)]_k -M(s-)_k\right] \mathds 1_{ \lbrace z \leq F(M_i(s-))\rbrace}\nonumber\\ 
&&\hspace{9.5cm}\mathcal M(ds,di,d\theta,dz),
\end{eqnarray}
and summing on $k$, we deduce
\begin{eqnarray}\label{Proof:M1}
\|M(t)\|_{1} & = & \|m\|_{1} + \int_0^{t} \int_{i<j} \int_0^{\infty} \left[ \|c_{ij}\left(M(s-)\right)\|_{1} - \| M(s-)\|_{1}\right]  \mathds 1_{\lbrace z \leq K(M_i(s-),M_j(s-))\rbrace}  \nonumber \\ &&\hspace{9.5cm}\mathcal N(ds,d(i,j),dz)\nonumber \\
&  & + \int_0^{t} \int_{i} \int_0^{\infty} \left[ \|f_{i\theta}\left(M(s-)\right)\|_{1} - \| M(s-)\|_{1}\right] \mathds 1_{ \lbrace z \leq F(M_i(s-))\rbrace} \nonumber \\ 
&&\hspace{9.5cm}\mathcal M(ds,di,d\theta,dz).
\end{eqnarray}
Note that, clearly $\|c_{ij}\left(m\right)\|_{1} = \|m\|_{1}$  and $\|f_{i\theta}\left(m\right)\|_{1} = \|m\|_{1} + m_i \left(\sum_{k\geq 1} \theta_k -1\right) \leq \|m\|_{1}$ for all $m\in \ell_\lambda$, since $\sum_{k\geq 1} \theta_k \leq 1$ $\beta$-a.e. Then, 
\begin{equation}
\sup_{[0,t]}\|M(s)\|_{1}  \leq  \|m\|_{1}, \,a.s.\,\,\, \forall t\geq 0.\nonumber
\end{equation}
This implies for all $s \in[0,t]$, $\sup_i M_i(s) \leq \sup_{[0,t]} \|M(s)\|_1 \leq \|m\|_1\, a.s$. We set
\begin{equation}\label{Koline2}
\overline K_m = \sup_{(x,y) \in [0,\|m\|_1]^2}K(x,y) \hspace{1cm} \textrm{and} \hspace{1cm} \overline F_m = \sup_{x \in [0,\|m\|_1]}F(x)
\end{equation} which are finite since $K$ and $F$ are bounded on every compact in $[0,\infty)^2$ and $[0,\infty)$ respectively.\medskip

In the same way, from (\ref{def_sto-process}) for $\lambda\in(0,1)$ we have for $k\geq 1$,
\begin{eqnarray*}
[M_k(t)]^{\lambda} & = & [M_k(0)]^{\lambda} + \int_0^t \int_{i<j} \int_0^{\infty} \left[ [c_{ij}\left(M(s-)\right)]_k^{\lambda} - [M_k(s-)]^{\lambda}\right] \mathds 1_{\lbrace z \leq K(M_i(s-),M_j(s-))\rbrace} \nonumber\\ 
&&\hspace{9.5cm}\mathcal N(ds,d(i,j),dz) \nonumber\\
&&+ \int_0^t \int_{i} \int_{\Theta} \int_0^{\infty} \left[ [f_{i\theta}\left(M(s-)\right)]_k^{\lambda} - [M(s-)]_k^{\lambda}\right] \mathds 1_{ \lbrace z \leq F(M_i(s-))\rbrace}\nonumber\\ 
&&\hspace{9.5cm}\mathcal M(ds,di,d\theta,dz),
\end{eqnarray*}
and summing on $k$, we deduce
\begin{eqnarray}\label{Proof:Ml}
\|M(t)\|_{\lambda} & = & \|m\|_{\lambda} + \int_0^{t} \int_{i<j} \int_0^{\infty} \left[ \|c_{ij}\left(M(s-)\right)\|_{\lambda} - \| M(s-)\|_{\lambda}\right]  \mathds 1_{\lbrace z \leq K(M_i(s-),M_j(s-))\rbrace}  \nonumber \\ 
&&\hspace{9.5cm}\mathcal N(ds,d(i,j),dz)\nonumber \\
&  & + \int_0^{t} \int_{i} \int_0^{\infty} \left[ \|f_{i\theta}\left(M(s-)\right)\|_{\lambda} - \| M(s-)\|_{\lambda}\right] \mathds 1_{ \lbrace z \leq F(M_i(s-))\rbrace}\nonumber \\ 
&&\hspace{9.5cm}\mathcal M(ds,di,d\theta,dz).
\end{eqnarray}
We take the expectation, use (\ref{C1_01}) and (\ref{F1_01}) with (\ref{Clambdabounds}) and (\ref{Koline2}), to obtain
\begin{eqnarray*}
\mathbb E \left[\sup_{s\in[0,t]} \|M(s)\|_{\lambda}\right]& \leq & \|m\|_{\lambda}  + C_{\beta}^{\lambda} \int_0^t \mathbb E\left[\sum_{i\geq 1} F\left(M_i(s)\right) M_i^{\lambda}(s) \right]ds \\
& \leq & \|m\|_{\lambda} + \overline{F}_m C_{\beta}^{\lambda} \int_0^t \mathbb E\left[  \|M(s)\|_{\lambda} \right]ds.
\end{eqnarray*}
We conclude using the Gronwall Lemma.\medskip

\noindent\textbf{Bound for $\delta_{\lambda}$.-} The aim of this paragraph is to prove \textit{Proposition \ref{Result_Prop}. \ref{Prop_C01_ii})}. For this, we consider for $m,\tilde{m} \in \ell_{\lambda}$ some solutions to  $SDE(K,F,m,\mathcal N,\mathcal M)$ and $SDE(K,F,\tilde m,\mathcal N,\mathcal M)$ which will be written $M(t) := M(m,t)$ and $\tilde M(t) :=  M(\tilde m,t)$ for simplicity. Since $M$ and $\tilde M$ solve (\ref{def_sto-process}) with the same Poisson measures $\mathcal N$ and $\mathcal M$, and since the numbers of particles in the systems are $a.s.$ finite, we have
\begin{equation} \label{decomp_dist}
\delta_{\lambda}(M(t),\tilde M(t)) = \delta_{\lambda}(m,\tilde m) + A_t^c + B_t^c + C_t^c + A_t^f + B_t^f + C_t^f,
\end{equation}
where
\begin{eqnarray*}
A_t^c & = & \int_0^t \int_{i<j} \int_0^{\infty} \left\lbrace \delta_{\lambda}\left(c_{ij}(M(s-)),c_{ij}(\tilde M(s-))\right) - \delta_{\lambda}\left(M(s-),\tilde M(s-)\right)\right\rbrace \\
& & \hspace{3.5cm} \mathds 1_{\left\lbrace z \leq K\left(M_i(s-),M_j(s-)\right) \wedge K\left(\tilde M_i(s-),\tilde M_j(s-)\right) \right\rbrace}\, \mathcal N(ds,d(i,j),dz),
\end{eqnarray*}
\begin{eqnarray*}
B_t^c & = & \int_0^t \int_{i<j} \int_0^{\infty} \left\lbrace \delta_{\lambda}\left(c_{ij}(M(s-)),\tilde M(s-)\right) - \delta_{\lambda}\left(M(s-),\tilde M(s-)\right)\right\rbrace \\
& & \hspace{3.5cm} \mathds 1_{\left\lbrace K\left(\tilde M_i(s-),\tilde M_j(s-)\right)  \leq z \leq K\left(M_i(s-),M_j(s-)\right) \right\rbrace}\, \mathcal N(ds,d(i,j),dz),
\end{eqnarray*}
\begin{eqnarray*}
C_t^c & = & \int_0^t \int_{i<j} \int_0^{\infty} \left\lbrace \delta_{\lambda}\left(M(s-),c_{ij}(\tilde M(s-))\right) - \delta_{\lambda}\left(M(s-),\tilde M(s-)\right)\right\rbrace \\
& & \hspace{3.5cm} \mathds 1_{\left\lbrace K\left( M_i(s-), M_j(s-)\right)  \leq z \leq K\left(\tilde M_i(s-),\tilde M_j(s-)\right) \right\rbrace}\, \mathcal N(ds,d(i,j),dz),
\end{eqnarray*}
\begin{eqnarray*}
A_t^f & = & \int_0^t \int_{i} \int_{\Theta}  \int_0^{\infty} \left\lbrace \delta_{\lambda}\left(f_{i\theta}(M(s-)),f_{i\theta}(\tilde M(s-))\right) - \delta_{\lambda}\left(M(s-),\tilde M(s-)\right)\right\rbrace \\
& & \hspace{6cm} \mathds 1_{\left\lbrace z \leq F\left(M_i(s-)\right) \wedge F\left(\tilde M_i(s-)\right) \right\rbrace}\, \mathcal M(ds,di,d\theta,dz),
\end{eqnarray*}
\begin{eqnarray*}
B_t^f & = & \int_0^t \int_{i} \int_{\Theta} \int_0^{\infty} \left\lbrace \delta_{\lambda}\left(f_{i\theta}(M(s-)),\tilde M(s-)\right) - \delta_{\lambda}\left(M(s-),\tilde M(s-)\right)\right\rbrace \\
& & \hspace{6cm} \mathds 1_{\left\lbrace F\left(\tilde M_i(s-)\right)  \leq z \leq F\left(M_i(s-)\right) \right\rbrace}\,  \mathcal M(ds,di,d\theta,dz),
\end{eqnarray*}
\begin{eqnarray*}
C_t^f & = & \int_0^t \int_{i} \int_{\Theta} \int_0^{\infty} \left\lbrace \delta_{\lambda}\left(M(s-),f_{i\theta}(\tilde M(s-))\right) - \delta_{\lambda}\left(M(s-),\tilde M(s-)\right)\right\rbrace \\
& & \hspace{6cm} \mathds 1_{\left\lbrace F\left( M_i(s-)\right)  \leq z \leq F\left(\tilde M_i(s-)\right) \right\rbrace}\, \mathcal  M(ds,di,d\theta,dz).
\end{eqnarray*}
Note also that
\begin{eqnarray}
\label{inetrian_C}\\
\left| \delta_{\lambda}\left(c_{ij}(M(s-)),\tilde M(s-)\right) - \delta_{\lambda}\left(M(s-),\tilde M(s-)\right) \right| & \leq & \delta_{\lambda}\left(c_{ij}(M(s-)), M(s-)\right) \nonumber  \\
\label{inetrian_F} \\
\left| \delta_{\lambda}\left(f_{i\theta}(M(s-)),\tilde M(s-)\right) - \delta_{\lambda}\left(M(s-),\tilde M(s-)\right) \right| & \leq & \delta_{\lambda}\left(f_{i\theta}(M(s-)), M(s-)\right) \nonumber
\end{eqnarray}

We now search for an upper bound to the expression in (\ref{decomp_dist}). We define, for all $x >0$, the stopping time $\tau(m,x) := \inf \lbrace t\geq 0;\|M(m,t)\|_{\lambda} \geq x \rbrace$. We set $\tau_x =\tau(m,x) \wedge \tau(\tilde m,x)$.\medskip

\noindent Furthermore, since for all $s \in[0,t]$, $\sup_i M_i(s) \leq \sup_{[0,t]} \|M(s)\|_1 \leq \|m\|_1 := a_m\, a.s$, equivalently for $\tilde M$, we put $a_{\tilde m} = \|\tilde m\|_1$. For $a:=a_m \vee a_{\tilde m}$ we set $\kappa_{a}$ and $\mu_{a}$ the constants for which the kernels $K$ and $F$ satisfy (\ref{Hyp_CoagK}) and (\ref{Hyp_FragK}). Finally, we set $\overline F_m$ as in (\ref{Koline2}).  \medskip

\noindent Term $A_t^c$: using (\ref{C3_01}) we deduce that this term is non-positive, we bound it by $0$. \medskip

\noindent Term $B_t^c$: we take the expectation, use (\ref{inetrian_C}), (\ref{C2_01}) and  (\ref{Hyp_CoagK}), to obtain
\begin{eqnarray}\label{Btc_01}
\mathbb E \left[\sup_{s\in[0,t\wedge\tau_x]}B_s^c\right] & \leq & \mathbb E \Bigg[ \int_0^{t\wedge\tau_x}  \sum_{i<j} 2M_j^{\lambda}(s) \Bigg|K\left(M_i(s),M_j(s)\right)- K \left(\tilde M_i(s),\tilde M_j(s)\right) \Bigg|   ds \Bigg]  \nonumber \\
& \leq & 2 \kappa_{a} \mathbb E \Bigg[ \int_0^{t\wedge\tau_x}  \sum_{i<j} M_j^{\lambda}(s)  \Big( \left|  M_i^{\lambda}(s) - \tilde M_i^{\lambda}(s)\right|\nonumber  + \left|M_j^{\lambda}(s) - \tilde M_j^{\lambda}(s)\right| \Big)ds \Bigg]  \nonumber \\
& \leq & 2\kappa_{a}\mathbb E \left[ \int_0^{t\wedge\tau_x}   \sum_{i\geq 1} \left|  M_i^{\lambda}(s) - \tilde M_i^{\lambda}(s)\right| \sum_{j\geq i+1} M_j^{\lambda}(s)ds\right] \nonumber \\
& &  +2\kappa_{a}\mathbb E \left[  \int_0^{t\wedge\tau_x}   \sum_{j\geq 2} \left|M_j^{\lambda}(s) - \tilde M_j^{\lambda}(s)\right|  \sum_{i=1}^{j-1} M_i^{\lambda}(s) ds \right] \nonumber \\
& \leq & 4 \kappa_{a} \mathbb E \left[\int_0^{t\wedge\tau_x}    \|M(s)\|_{\lambda} \, \delta_{\lambda}\left(M(s),\tilde M(s)\right)ds \right] \nonumber \\
& \leq & 4 \kappa_{a} \,x\,\int_0^{t} \mathbb E \left[ \,\sup_{u\in[0,s\wedge\tau_x]} \delta_{\lambda}\left(M(u),\tilde M(u)\right) \right]ds,
\end{eqnarray}
we used that for $m \in \ell_{\lambda} $, $\sum_{i=1}^{j-1} m_j^{\lambda} \leq \sum_{i=1}^{j-1} m_i^{\lambda} \leq \|m\|_{\lambda}$.\medskip

\noindent Term $C_t^c$: it is treated exactly as $B_t^c$.\medskip

\noindent Term $A_t^f$: We take the expectation, and use (\ref{F3_01}) together with (\ref{Clambdabounds}), to obtain  
\begin{eqnarray}\label{Atf_01}
\mathbb E \left[\sup_{s\in[0,t\wedge\tau_x]} A_s^f\right] & \leq & C^{\lambda}_{\beta}\mathbb E \left[\int_0^{t\wedge\tau_x}  \sum_{i\geq 1} \left(F(M_i(s))\wedge F(\tilde M_i(s))\right)  \left| M_i^{\lambda}(s) -\tilde M_i^{\lambda}(s) \right| \right] ds \nonumber \\
& \leq & \overline{F}_m\,C^{\lambda}_{\beta}\mathbb E \left[\int_0^{t\wedge\tau_x} \sum_{i\geq 1}  \left| M_i^{\lambda}(s) -\tilde M_i^{\lambda}(s) \right| \right] ds \nonumber \\
& \leq &  \overline{F}_m\,C^{\lambda}_{\beta} \int_0^t  \mathbb E \left[ \sup_{u\in[0,s\wedge\tau_x]} \delta_{\lambda}\left(M(u),\tilde M(u)\right) \right] ds.
\end{eqnarray}

\noindent Term $B_t^f$: we take the expectation and use (\ref{Hyp_FragK}) (recall $a:= a_m \vee a_{\tilde m}$), (\ref{inetrian_F}), (\ref{F2_01}) together with (\ref{Clambdabounds}), (\ref{ineq_distance}) and finally Proposition \ref{Result_Prop}. \textit{\ref{Prop_C01_ii})}, to obtain
\begin{eqnarray}\label{Btf_01}
\mathbb E \left[ \sup_{s\in[0,t\wedge\tau_x]}B_s^f\right] & \leq & C^{\lambda}_{\beta}\mathbb E \left[\int_0^{t\wedge\tau_x}  \sum_{i\geq 1} \left| F(M_i(s)) - F(\tilde M_i(s))\right| M_i^{\lambda}(s) \right] ds \nonumber \\
& \leq & \mu_{a}\, C^{\lambda}_{\beta} \mathbb E \left[\int_0^{t\wedge\tau_x}  \sum_{i\geq 1} \left| M_i(s)^{\alpha} - \tilde M_i(s)^{\alpha} \right| \left(M_i^{\lambda}(s) + \tilde M_i^{\lambda}(s)\right) \right] ds \nonumber\\
& \leq &  \mu_{a}\, C^{\lambda}_{\beta}\, C \,  \mathbb E \Bigg[\int_0^{t\wedge\tau_x}  \left(\|M(s)\|_1^{\alpha} + \|\tilde M(s)\|_1^{\alpha} \right) \nonumber\times \sum_{i\geq 1}  \left| M_i^{\lambda}(s) -\tilde M_i^{\lambda}(s) \right|\Bigg] ds \nonumber \\
& \leq & 2\mu_{a}\, C^{\lambda}_{\beta}\, C \left(\|m\|_{1}^{\alpha} \vee \|\tilde m\|_{1}^{\alpha}\right) \times\int_0^t  \mathbb E \left[ \sup_{u\in[0,s\wedge\tau_x]} \delta_{\lambda}\left(M(u),\tilde M(u)\right)\right] ds.
\end{eqnarray}

\noindent Term $C_t^f$: it is treated exactly as $B_t^f$.\medskip

\noindent \textit{Conclusion.-} we take the expectation on (\ref{decomp_dist}) and gather (\ref{Btc_01}), (\ref{Atf_01}) and (\ref{Btf_01}) to obtain
\begin{eqnarray}\label{dist_01a}
\mathbb E \left[ \sup_{s\in[0,t\wedge \tau_x]}\delta_{\lambda}\left( M(s),\tilde M(s)\right)  \right] & \leq &  \delta_{\lambda}\left( m,\tilde m \right) \nonumber \\
&& +  \left[ 8\kappa_{a}\,x  +  4\mu_{a}\, C^{\lambda}_{\beta}\, C \left(\|m\|_{1}^{\alpha} \vee \|\tilde m\|_{1}^{\alpha}\right) + \overline{F}_mC^{\lambda}_{\beta}\right]\nonumber \\
& & \hspace{1cm}\times \int_0^t \mathbb E \left[  \,\sup_{u\in[0,s\wedge\tau_x]} \delta_{\lambda}\left(M(u),\tilde M(u)\right)   \right] ds.
\end{eqnarray}
We conclude using the Gronwall Lemma:
\begin{eqnarray*}
\mathbb E \left[ \sup_{s\in[0,t\wedge \tau_x]}\delta_{\lambda}\left( M(s),\tilde M(s)\right)  \right] & \leq &  \delta_{\lambda}\left( m,\tilde m \right) \times e^{C\,\left(x \vee 1\vee \|m\|_{1}^{\alpha} \vee \|\tilde m\|_{1}^{\alpha}\right)\, t }\nonumber\\
& \leq &  \delta_{\lambda}\left( m,\tilde m \right) \, e^{C\,\left(x+1\right)\, t }.
\end{eqnarray*}
Where $C$ is a positive constant depending on $\lambda$, $\alpha$, $\kappa_a$, $\mu_a$, $K$, $F$, $C^{\lambda}_{\beta}$, $\|m\|_1$ and $\|\tilde m\|_1$.\medskip

This ends the proof of Proposition \ref{Result_Prop}.
\end{proof}

\subsection{Existence for \textit{SDE}: general case}\label{EandUSDE}

We may now prove existence for $(SDE)$. For this, we will build a sequence of coupled finite Coalescence-Fragmentation processes which will be proved to be a Cauchy sequence in $\mathbb D\left([0,\infty),\ell_{\lambda}\right)$. 

\begin{theorem}\label{The_EandU}
Let $\lambda \in(0,1]$, $\alpha\geq 0$ and $m\in\ell_{\lambda}$. Consider the coagulation kernel $K$, the fragmentation kernel $F$, the measure $\beta$ and the Poisson measures $\mathcal N$ and $\mathcal M$ as in Definition \ref{Def_Proc}. \medskip

\noindent Then, there exists a solution $(M(m,t))_{t\geq0}$ to $SDE(K,F,m,\mathcal N,\mathcal M)$.
\end{theorem}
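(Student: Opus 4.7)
The strategy is the classical approximation-by-finite-processes one, exploiting the family from Lemma \ref{lemmaFinite} together with the moment and stability estimates of Proposition \ref{Result_Prop}.

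First, I would fix $m \in \ell_\lambda$ and choose an approximating sequence $(m^n)_{n\geq 1} \subset \ell_{0+}$ with $\delta_\lambda(m^n,m)\to 0$; the natural choice is simply to truncate, $m^n=(m_1,\dots,m_n,0,0,\dots)$, since then $\delta_\lambda(m^n,m)=\sum_{k>n} m_k^\lambda\to 0$. For each $n$ I would construct the finite process $M^n(t):=M^n(m^n,t)$ from Lemma \ref{lemmaFinite}, coupling all of them by driving them with the same pair of Poisson measures $\mathcal N$ and $\mathcal M$: concretely, $M^n$ solves the SDE obtained from (\ref{def_sto-process}) by inserting the indicator $\mathds 1_{\{\theta\in\Theta(n)\}}$ and replacing $f_{i\theta}$ by $f_{i,\psi_n(\theta)}$ in the fragmentation integral, so that its effective fragmentation measure is exactly $\beta_n$ while the driving noise remains common across $n$.

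The core task is to show that $(M^n)_{n}$ is Cauchy in $\mathbb D([0,\infty),\ell_\lambda)$. I would follow the decomposition used in the proof of Proposition \ref{Result_Prop}.\ref{Prop_C01_ii}): writing $\delta_\lambda(M^n(t),M^p(t))-\delta_\lambda(m^n,m^p)$ for $n<p$ as the sum of the six coupling integrals $A^c_t,B^c_t,C^c_t,A^f_t,B^f_t,C^f_t$. The coagulation terms and the \emph{common} part of the fragmentation terms are estimated exactly as there and contribute, after localisation by the stopping time $\tau^{n,p}_x:=\tau(m^n,x)\wedge\tau(m^p,x)$, a factor of order $\int_0^t\mathbb E[\sup_{u\leq s\wedge\tau^{n,p}_x}\delta_\lambda(M^n(u),M^p(u))]\,ds$. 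The new feature is two extra fragmentation contributions: one from points of $\mathcal M$ with $\theta\in\Theta(p)\setminus\Theta(n)$ (where $M^p$ fragments but $M^n$ does not), bounded $\beta$-a.e.\ by $(1-\theta_1^\lambda)+\sum_{k\geq 2}\theta_k^\lambda$ times $\overline F_m \|M^p(s-)\|_\lambda$; and one from the projection mismatch between $\psi_n(\theta)$ and $\psi_p(\theta)$ on $\Theta(n)$, bounded $\beta$-a.e.\ by $\sum_{k>n}\theta_k^\lambda$ times $\overline F_m \|M^p(s-)\|_\lambda$. Both are integrable against $\beta$ by (\ref{Hyp2_Beta}), and by dominated convergence their integrals tend to $0$ as $n\to\infty$, yielding an error $\varepsilon_n\to 0$. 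Gronwall's lemma then gives
\[
\mathbb E\Big[\sup_{s\leq t\wedge\tau^{n,p}_x}\delta_\lambda\big(M^n(s),M^p(s)\big)\Big]\leq \big(\delta_\lambda(m^n,m^p)+\varepsilon_n\, t\big)\,e^{C(x+1)t}.
\]

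To remove the stopping time I would invoke Proposition \ref{Result_Prop}.\ref{Prop_C01_i}): the bound $\mathbb E[\sup_{s\leq t}\|M^n(s)\|_\lambda]\leq\|m^n\|_\lambda e^{\overline F_m C_\beta^\lambda t}$ is uniform in $n$, so by Markov's inequality $\sup_n\mathbb P(\tau(m^n,x)\leq t)\to 0$ as $x\to\infty$. Hence $(M^n)_n$ is Cauchy in probability for the uniform $\delta_\lambda$-distance on compact time intervals, and a subsequential a.s.\ limit $M(m,\cdot)\in\mathbb D([0,\infty),\ell_\lambda)$ exists. The most delicate step, which I expect to be the main obstacle, is verifying that this limit actually solves (\ref{def_sto-process}) with the \emph{full} measure $\beta$ rather than an approximation: one must pass to the limit inside the stochastic integrals against $\mathcal M$, replacing $[f_{i,\psi_n(\theta)}(M^n(s-))-M^n(s-)]\mathds 1_{\{\theta\in\Theta(n)\}}$ by $f_{i\theta}(M(s-))-M(s-)$. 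The first factor is handled by the H\"older continuity (\ref{Hyp_FragK}) of $F$ and the continuity of $f_{i\theta}$ in $\delta_\lambda$ (via the auxiliary estimates of the appendix), and the overall dominated convergence against $\mathcal M$ is justified by $C_\beta^\lambda<\infty$ combined with the uniform $\lambda$-moment bound from Proposition \ref{Result_Prop}.\ref{Prop_C01_i}). Once this convergence is in hand, $M(m,\cdot)$ is the sought solution to $SDE(K,F,m,\mathcal N,\mathcal M)$.
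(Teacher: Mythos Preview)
Your proposal is correct and follows essentially the same approach as the paper: the same truncation $m^n$, the same coupling through common Poisson measures, the same decomposition of $\delta_\lambda(M^p,M^q)$ with the two extra fragmentation error terms (projection mismatch on $\Theta(n)$ and the set $\Theta(p)\setminus\Theta(n)$), Gronwall, uniform removal of the stopping times via Proposition~\ref{Result_Prop}.\ref{Prop_C01_i}), and finally passage to the limit in the SDE. The one technical refinement worth noting is that for the last step the paper works not with $\delta_\lambda$ but with the coordinatewise distance $d(m,\tilde m)=\sum_k 2^{-k}|m_k-\tilde m_k|$, splitting the integrand discrepancy $\Delta_n(t)$ into six pieces and using the $d$-based estimates (\ref{d1c})--(\ref{d3f}) from the appendix; in particular the coagulation piece $A_n^c=\sum_{i<j}A_n^{ij}$ requires both pointwise convergence of each $A_n^{ij}$ and a uniform tail bound in $(i,j)$, which the $2^{-i}$ weights of $d$ make tractable.
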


We point out that we do not provide a pathwise uniqueness result for such processes. This is because, under our assumptions, we cannot take advantage of Proposition \ref{Result_Prop}. for this process since the expressions in (\ref{Proof:M1}), (\ref{Proof:Ml}) and (\ref{decomp_dist}) are possibly not true in general.\medskip

Nevertheless, when adding the hypothesis $\lim_{x+y \rightarrow 0}K(x,y) = 0$ to the coagulation kernel we can prove that these expressions hold by considering finite sums and passing to the limit.  We believe that this is due to a possible injection of \textit{dust} (particles of mass $0$) into the system which could produce an increase in the total mass of the system; see \cite{Sum_Kernels}.\medskip 

\noindent In order to prove this theorem, we first need the following lemma.
\begin{lemma}\label{tainfini}
Let $\lambda \in (0,1]$ and $\alpha\geq 0$ be fixed. Assume that the coagulation kernel $K$, the fragmentation kernel $F$ and a measure $\beta$ satisfy Hypotheses \ref{HypSto}. Consider for all $k\geq 1$ the measure $\beta_k$ defined by (\ref{beta_n}). Finally, consider also a subset $\mathcal A$ of $\ell_{0+}$ such that $\sup_{m\in\mathcal A} \|m\|_{\lambda} < \infty$ and $\lim_{i\rightarrow \infty} \sup_{m\in\mathcal A} \sum_{k\geq i} m^{\lambda}_{k} =0$. \medskip

\noindent For each $m\in\mathcal A$ and each $k\geq 1$, let $(M^k(m,t))_{t\geq 0}$ be the unique solution to $SDE(K,F,m,\mathcal N,\mathcal M_k)$ constructed in Lemma \ref{lemmaFinite}., define $\tau_k(m,x) = \inf \{t\geq 0:\|M^k(m,t)\|_{\lambda} \geq x\}$. Then for each $t\geq 0$ we have $\underset{x \rightarrow \infty}{\lim} \gamma(t,x) = 0$, where

\begin{equation}
 \gamma(t,x) := \sup_{m\in \mathcal A} \sup_{k\geq 1} P\left[\sup_{s\in[0,t]}\|M^k(m,s)\|_{\lambda} \geq x\right]. \nonumber
\end{equation}
\end{lemma}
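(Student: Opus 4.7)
The plan is to apply Markov's inequality to the first-moment bound provided by Proposition \ref{Result_Prop}(\ref{Prop_C01_i}) and verify that the resulting estimate can be made uniform in both the initial condition $m\in\mathcal{A}$ and the approximation index $k\geq 1$. Since $\beta_k$ satisfies (\ref{Hypfinite_beta}) (as noted after Lemma \ref{lemmaFinite}), Proposition \ref{Result_Prop}(\ref{Prop_C01_i}) applies to $M^k(m,\cdot)$ and yields
\[
\mathbb{E}\Bigl[\sup_{s\in[0,t]}\|M^k(m,s)\|_\lambda\Bigr]\;\leq\;\|m\|_\lambda\, e^{\bar F_m\, C^{\lambda}_{\beta_k}\, t},
\]
so the task reduces to bounding $\|m\|_\lambda$, $\bar F_m$ and $C^{\lambda}_{\beta_k}$ independently of $m\in\mathcal{A}$ and $k\geq 1$.

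For the measure side, unwinding (\ref{beta_n}) and the definition (\ref{projector}) of the projector $\psi_k$ gives
\[
C^{\lambda}_{\beta_k} \;=\; \int_{\Theta} \mathbf{1}_{\{\theta\in\Theta(k)\}} \Bigl[\sum_{j=2}^{k} \theta_j^{\lambda} + (1-\theta_1)^{\lambda}\Bigr]\beta(d\theta) \;\leq\; C^{\lambda}_\beta,
\]
which is finite and independent of $k$ by Hypothesis \ref{HypSto}. For $\bar F_m = \sup_{[0,\|m\|_1]} F$, I would use Remark \ref{inclusion} (namely $\|m\|_1\leq \|m\|_{\lambda}^{1/\lambda}$) together with the hypothesis $\sup_{m\in\mathcal{A}} \|m\|_\lambda < \infty$ to obtain $R := \sup_{m\in\mathcal{A}} \|m\|_1 < \infty$, and then the local boundedness of $F$ gives $\bar F_m\leq \bar F_R := \sup_{[0,R]} F < \infty$ uniformly in $m\in\mathcal{A}$.

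Setting $L := \sup_{m\in\mathcal{A}}\|m\|_\lambda$ and combining the above, the moment bound becomes
\[
\sup_{m\in\mathcal{A}}\sup_{k\geq 1}\mathbb{E}\Bigl[\sup_{s\in[0,t]}\|M^k(m,s)\|_\lambda\Bigr] \;\leq\; L\, e^{\bar F_R\, C^{\lambda}_\beta\, t} \;=:\; G(t),
\]
and Markov's inequality then yields $\gamma(t,x) \leq G(t)/x$, which tends to $0$ as $x\to\infty$, concluding the proof.

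No serious obstacle arises: the argument is essentially a direct corollary of Proposition \ref{Result_Prop}(\ref{Prop_C01_i}). The only points requiring some care are the monotonicity $C^{\lambda}_{\beta_k}\leq C^{\lambda}_\beta$ (tracking carefully how $\psi_k$ truncates the tail of $\theta$) and the uniform control of $\bar F_m$ via the injection $\ell_\lambda \hookrightarrow \ell_1$. I note in passing that the tail hypothesis $\lim_{i\to\infty} \sup_{m\in\mathcal{A}} \sum_{j\geq i} m_j^\lambda = 0$ is not needed for this argument, which suggests it is included in view of subsequent applications of the lemma.
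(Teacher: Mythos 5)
Your argument is correct and is essentially the same as the paper's: the paper likewise applies Markov's inequality to the moment bound of Proposition \ref{Result_Prop}(\ref{Prop_C01_i}) to get $\gamma(t,x)\leq \frac{1}{x}\sup_{m\in\mathcal A}\|m\|_{\lambda}e^{\overline F_m C^{\lambda}_\beta t}$ and lets $x\to\infty$. You merely make explicit the two finiteness checks (the monotonicity $C^{\lambda}_{\beta_k}\leq C^{\lambda}_\beta$ and the uniform bound on $\overline F_m$ via $\|m\|_1\leq\|m\|_\lambda^{1/\lambda}$) that the paper leaves implicit, and your observation that the tail hypothesis on $\mathcal A$ is not used here is accurate.
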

Remark that this convergence does not depend on $\beta_k$ since is based on a bound not depending in the number of fragments but only on $C_{\beta}^{\lambda}$. 

\subsubsection{Proofs}
\begin{proof}[Proof of Lemma \ref{tainfini}]
It suffices to remark that from Proposition \ref{Result_Prop}. \textit{\ref{Prop_C01_i})}, we have
\begin{eqnarray*}
\sup_{m\in \mathcal A} \sup_{k\geq 1} P\left[\sup_{[0,t]}\|M^k(m,s)\|_{\lambda} \geq x\right] & \leq &  \frac{1}{x} \sup_{m\in \mathcal A} \sup_{k\geq 1}\mathbb E\left[ \sup_{[0,t]}\|M^k(m,s)\|_{\lambda}\right] \\
& \leq & \frac{1}{x} \sup_{m\in \mathcal A} \|m\|_{\lambda} e^{\overline F_m C^{\lambda}_{\beta} t}.
\end{eqnarray*}
We make $x$ tend to infinity and the lemma follows.
\end{proof}
\begin{proof}[Proof of Theorem \ref{The_EandU}]

First, recall $\psi_n$ defined by (\ref{projector}) and the measure $\beta_n = \mathds 1 _{\theta \in\Theta(n)} \beta \circ \psi_n^{-1}$. Consider the Poisson measure $\mathcal M(dt,di,d\theta,dz)$ associated to the fragmentation, as in Definition \ref{Def_Proc}.\medskip

\noindent We set $\mathcal M_n = \mathds 1 _{\Theta(n)}\mathcal  M \circ \psi_n^{-1}$. This means that writing $\mathcal M$ as $\mathcal M = \sum_{k \geq 1} \delta_{(T_k,i_k,\theta_k,z_k)}$, we have $\mathcal M_n = \sum_{k \geq 1} \delta_{(T_k,i_k,\psi_n(\theta_k),z_k)} \mathds 1_{\theta\in\Theta(n)}$. Defined in this way, $\mathcal M_n$ is a Poisson measure on $[0,\infty) \times \mathbb N \times \Theta \times [0,\infty)$ with intensity measure  $dt$ $\left(\sum_{k \geq 1} \delta_{k}(di) \right)$ $\beta_n(d\theta)$ $dz$. In this paragraph $\delta_{(\cdot)}$ holds for the Dirac measure on $(\cdot)$.\medskip

We define $m^n \in \ell_{0+}$ by $m^n = (m_1,m_2,\cdots,m_n,0,\cdots)$ and denote $M^n(t):=M(m^n,t)$ the unique solution to $SDE(K,F,m^n,\mathcal N,\mathcal M_n)$ obtained in Proposition \ref{Finite_PoissonProc}. Note that $M^n(t)$ satisfies the following equation
\begin{eqnarray}\label{finiproc01}
\\
M^n(t) & = & m^n + \int_0^t \int_{i<j} \int_0^{\infty} \left[ c_{ij}\left(M^n(s-)\right) - M^n(s-)\right] \mathds 1_{\lbrace z \leq K(M^n_i(s-),M^n_j(s-))\rbrace} \nonumber \\
&&\hspace{10cm} \mathcal N(ds,d(i,j),dz)\nonumber \\
&&+ \int_0^t \int_{i} \int_{\Theta} \int_0^{\infty} \left[ f_{i\psi_n(\theta)}\left(M^n(s-)\right) - M^n(s-)\right] \mathds 1_{ \lbrace z \leq F(M^n_i(s-))\rbrace} \mathds 1_{ \lbrace \theta \in \Theta(n)\rbrace}\nonumber\\
&& \hspace{10cm} \mathcal M(ds,di,d\theta,dz).\nonumber
\end{eqnarray}
This setting allows us to couple the processes since they are driven by the same Poisson measures. \medskip

\noindent \textbf{Convergence  $M^n_t \rightarrow M_t$.--} Consider $p, q \in \mathbb N$ with $1\leq p < q$, from (\ref{finiproc01}) we obtain
\begin{eqnarray}\label{dl_nn+1}
\delta_{\lambda}\left(M^p(t),M^{q}(t)\right) & \leq & \delta_{\lambda}(m^p, m^{q}) + A_c^{p,q}(t) + B_c^{p,q}(t)  + C_c^{p,q}(t) \\
& &  + A_f^{p,q}(t) + B_f^{p,q}(t) + C_f^{p,q}(t) + D_f^{p,q}(t).\nonumber
\end{eqnarray}
We obtain this equality, exactly as in (\ref{decomp_dist}), by replacing $M$ by $M^p$ and $\tilde M$ by $M^{q}$. The terms concerning the coalescence are the same. The terms concerning the fragmentation are, equivalently: 

\begin{eqnarray*}
A_f^{p,q}(t) & = & \int_0^t \int_{i} \int_{\Theta}  \int_0^{\infty} \bigg\lbrace \delta_{\lambda}\left(f_{i\psi_p(\theta)}(M^p(s-)),f_{i\psi_p(\theta)}(M^{q}(s-))\right) \\
&&\hspace{2cm} - \delta_{\lambda}\left(M^p(s-), M^{q}(s-)\right)\bigg\rbrace  \mathds 1_{\{\theta\in\Theta(p)\}} \mathds 1_{\left\lbrace z \leq F\left(M_i^p(s-)\right) \wedge F\left(M_i^{q}(s-)\right) \right\rbrace}\\
& &\hspace{9.5cm}\mathcal  M(ds,di,d\theta,dz),
\end{eqnarray*}
\begin{eqnarray*}
B_f^{p,q}(t) & = & \int_0^t \int_{i} \int_{\Theta} \int_0^{\infty} \left\lbrace \delta_{\lambda}\left(f_{i\psi_p(\theta)}(M^p(s-)), M^{q}(s-)\right) - \delta_{\lambda}\left(M^p(s-), M^{q}(s-)\right)\right\rbrace \\
& & \hspace{2.3cm}\mathds 1_{\{\theta\in\Theta(p)\}} \mathds 1_{\left\lbrace F\left(M_i^{q}(s-)\right)  \leq z \leq F\left(M_i^p(s-)\right) \right\rbrace}\,  \mathcal M(ds,di,d\theta,dz),
\end{eqnarray*}
\begin{eqnarray*}
C_f^{p,q}(t) & = & \int_0^t \int_{i} \int_{\Theta} \int_0^{\infty} \left\lbrace \delta_{\lambda}\left(f_{i\psi_p(\theta)}(M^q(s-)), M^{p}(s-)\right) - \delta_{\lambda}\left(M^p(s-), M^{q}(s-)\right)\right\rbrace \\
& & \hspace{2.3cm}\mathds 1_{\{\theta\in\Theta(p)\}} \mathds 1_{\left\lbrace F\left(M_i^{p}(s-)\right)  \leq z \leq F\left(M_i^q(s-)\right) \right\rbrace}\,  \mathcal M(ds,di,d\theta,dz),
\end{eqnarray*}
Finally, the term $D_f^{p,q}(t)$ is the term that collects the errors. 
\begin{eqnarray*}
D_f^{p,q}(t) & = &  \int_0^t \int_{i} \int_{\Theta}  \int_0^{\infty} \delta_{\lambda}\left(f_{i\psi_p(\theta)}(M^{q}(s-)),f_{i\psi_{q}(\theta)}(M^{q}(s-))\right) \mathds 1_{\{\theta\in\Theta(p)\}}\\
& & \hspace{5cm}\mathds 1_{\left\lbrace z \leq F\left(M_i^q(s-)\right) \right\rbrace}\, \mathcal M(ds,di,d\theta,dz)\\
&& + \int_0^t \int_{i} \int_{\Theta}  \int_0^{\infty} \left\lbrace \delta_{\lambda}\left(f_{i\psi_{q}(\theta)}(M^{q}(s-)), M^p(s-)\right) - \delta_{\lambda}\left(M^{p}(s-), M^{q}(s-)\right)\right\rbrace \\
& & \hspace{3cm} \mathds 1_{\left\lbrace z \leq F\left(M_i^q(s-)\right) \right\rbrace}\,\mathds 1_{\{\theta \in \Theta(q)\setminus \Theta(p)\}}\, \mathcal M(ds,di,d\theta,dz).
\end{eqnarray*}
The first term of $D_f^{p,q}(t)$ results from the utilization of the triangle inequality that gives $A_f^{p,q}(t)$ and $C_f^{p,q}(t)$. The second term is issued from fragmentation of $M^{q}$ when $\theta$ belongs to $\Theta(q) \setminus \Theta(p)$. This induces a fictitious jump to $M^p$ which does not undergo fragmentation.\medskip

\noindent We proceed to bound each term. We define, for all $x >0$ and $n\geq1$, the stopping time $\tau^x_n = \inf \{t\geq 0:\|M^n(t)\|_{\lambda} \geq x\}$.\medskip

From Proposition \ref{Result_Prop}. we have for all $s \in[0,t]$, $$\sup_{n\geq 1} \sup_{i\geq 1} M_i^n(s) \leq \sup_{n\geq 1} \sup_{i\geq 1}\sup_{[0,t]} \|M^n(s)\|_1 \leq \|m\|_1 := a_m\, a.s.$$

We set $\kappa_{a_m}$ and $\mu_{a_m}$ the constants for which the kernels $K$ and $F$ satisfy (\ref{Hyp_CoagK}) and (\ref{Hyp_FragK}). Finally, we set $\overline F_m = \sup_{[0,a_m]}F(x)$.  \medskip

\noindent The terms concerning coalescence are upper bounded on $[0,t \wedge \tau^x_{p} \wedge\tau^x_q]$ with $t\geq 0$, exactly as in (\ref{decomp_dist}).\medskip

\noindent Term  $A_f^{p,q}(t)$: we take the $\sup$ on $[0,t \wedge \tau^x_{p} \wedge\tau^x_q]$ and then the expectation. We use (\ref{F3_01}) together with (\ref{Clambdabounds}). We thus obtain exactly the same bound as for $A^f_t$. \medskip

\noindent Term $B_f^{p,q}(t)$: we take the $\sup$ on $[0,t \wedge \tau^x_{p} \wedge\tau^x_q]$ and then the expectation. We use (\ref{inetrian_F}), (\ref{F2_01}) with (\ref{Clambdabounds}) and (\ref{Hyp_FragK}). We thus obtain exactly the same bound as for $B^f_t$.\medskip

\noindent Term $C_f^{p,q}(t)$: it is treated exactly as $B_f^{p,q}(t)$. \medskip

\noindent Term $D_f^{p,q}(t)$: we take the $\sup$ on $[0,t \wedge \tau^x_{p} \wedge\tau^x_q]$ and then the expectation. For the first term we use (\ref{dln}). For the second term we use (\ref{inetrian_F}) and (\ref{F2_01}) together with (\ref{Clambdabounds}). Finally, we use Proposition \ref{Result_Prop}. \textit{\ref{Prop_C01_i})}. and the notation $C(\theta) := \sum_{k\geq 2} \theta_k^{\lambda} + (1-\theta_1^{\lambda})$, to obtain

\begin{eqnarray*}
&&\mathbb E \left[\sup_{s\in[0,t\wedge \tau^x_{q} \wedge\tau^x_p]} D_f^{p,q}(t)\right] \\
& &\hspace{0.5cm} \leq\,\,  \mathbb E \left[\int_0^{t \wedge \tau^x_{p} \wedge\tau^x_{q}} \sum_{i\geq 1} F\left(M^q_i(s)\right) \int_{\Theta}\mathds 1_{\{\theta \in \Theta(p)\}}\sum_{k = p+1}^q\theta_{k}^{\lambda}[M_i^{q}(s)]^{\lambda}\beta(d\theta) ds\right] \nonumber \\
&&\hspace{1cm} \,\,+\,\,  \mathbb E \Bigg[\int_0^{t\wedge \tau^x_{p} \wedge\tau^x_q}  \sum_{i\geq 1}  F\left(M^q_i(s)\right) [M_i^{q}(s)]^{\lambda} ds\, \int_{\Theta} C(\theta) \mathds 1_{\{\theta \in \Theta(q)\setminus \Theta(p)\}}\beta(d\theta)\Bigg]\nonumber \\
&& \hspace{0.5cm} \leq\,\, \overline{F}_m \int_{\Theta} \sum_{k > p}\theta_{k}^{\lambda}\beta(d\theta)\,\int_0^t\mathbb E \left[ \sup_{u\in[0,t]} \|M^{q}(u)\|_{\lambda} \right]  ds\nonumber\\
&& \hspace{1cm} \,\,+\,\,\overline{F}_m\, \int_{\Theta}  C(\theta) \mathds 1_{\{\theta \in \Theta\setminus \Theta(p)\}}\beta(d\theta)\,\int_0^t\mathbb E \left[ \sup_{u\in[0,t]} \|M^{q}(u)\|_{\lambda} \right]  ds\nonumber\\
& & \hspace{0.5cm} \leq\,\, \overline{F}_m\,t\, \|m\|_{\lambda}\, e^{\overline{F}_m\,C^{\lambda}_{\beta}\, t}(A(p) + B(p)),\nonumber
\end{eqnarray*}
where $A(p) : =\int_{\Theta} \sum_{k > p}\theta_{k}^{\lambda}\beta(d\theta)$ and $B(p) := \int_{\Theta}C(\theta) \mathds 1_{\{\theta \in \Theta\setminus \Theta(p)\}}\beta(d\theta)$. Note that by (\ref{Hyp2_Beta}) and since $\Theta\setminus \Theta(p)$ tends to the empty set, $A(p)$ and $B(p)$ tend to $0$ as $p$ tends to infinity.\medskip

\noindent Thus, gathering the terms  as for the bound (\ref{dist_01a}), we get
\begin{eqnarray}\label{dist_01na}
&&\mathbb E \left[ \sup_{s\in[0,t \wedge \tau^x_q\wedge  \tau^x_{p}]}\delta_{\lambda}\left( M^p(s), M^{q}(s)\right)  \right] \nonumber \\
&&\hspace{2cm} \leq \,\,  \delta_{\lambda}\left( m^p, m^{q} \right) + D_1 t [A(p) + B(p)] \nonumber\\
&& \hspace{2.5cm}+  \left( 8\kappa_1 \,x  +  C C_{\beta}^{\lambda} \|m\|_{1}^{\alpha} \right)\int_0^t \mathbb E \left[  \,\sup_{u\in[0,s \wedge \tau^x_q\wedge  \tau^x_{p}]} \delta_{\lambda}\left(M^{p}(u), M^{q}(u)\right)   \right] ds,
\end{eqnarray}
where $D_1 = \overline{F}_m \, \|m\|_{\lambda}\, e^{\overline{F}_m \,C^{\lambda}_{\beta}\, t}$. The Gronwall Lemma allows us to obtain
\begin{equation}\label{dist_01nb}
\mathbb E \left[ \sup_{s\in[0,t \wedge \tau^x_q\wedge  \tau^x_{p}]}\delta_{\lambda}\left( M^p(s), M^{q}(s)\right)  \right]  \leq  \left\lbrace \delta_{\lambda}\left( m^p, m^{q} \right) + D_1 [A(p) + B(p)]t \right\rbrace\times e^{D_2 \,x\, t },
\end{equation}
where $D_2$ is a positive constants depending on $\lambda$, $\alpha$, $\kappa_{a_m}$, $\mu_{a_m}$, $K$, $F$, $C^{\lambda}_{\beta}$ and $\|m\|_1$.\medskip

Since $\lim_{n\rightarrow \infty}\delta_{\lambda}(m^n,m) = 0$, we deduce from Lemma \ref{tainfini}. that for all $t\geq 0$,
\begin{equation}\label{lemma_adapted}
\lim_{x\rightarrow \infty}\overline{\gamma}(t,x) = 0 \,\,\, \textrm{where} \,\,\, \overline{\gamma}(t,x) := \sup_{n\geq 1} P[\tau(m^n,x)\leq t].
\end{equation}

This means that the stopping times $\tau^x_n$ tend to infinity as $x\rightarrow \infty$, uniformly in $n$.\medskip

Next, from (\ref{dist_01nb}), (\ref{lemma_adapted}) and since $(m^n)_{n\geq 1}$ is a Cauchy sequence for $\delta_{\lambda}$ and $(A(n))_{n\geq1}$ and $(B(n))_{n\geq1}$ converge to $0$, we deduce that for all $\varepsilon >0$, $T>0$  we may find $n_{\varepsilon}>0$ such that for $p,q\geq n_{\varepsilon}$ we have
\begin{equation}\label{boundProba}
P\left[\sup_{[0,T]}\delta_{\lambda}\left(M^p(t),M^{q}(t)\right) \geq \varepsilon \right] \leq \varepsilon.
\end{equation}
Indeed, for all $x>0$,
\begin{eqnarray*}
P\left[\sup_{[0,T]}\delta_{\lambda}\left(M^p(t),M^{q}(t)\right) \geq \varepsilon \right] &\leq & P[\tau^x_p \leq T] + P[\tau^x_q \leq T] +\frac{1}{\varepsilon} \mathbb E\left[ \sup_{[0,T\wedge \tau^x_p \wedge \tau^x_q]}\delta_{\lambda}\left(M^p(t),M^{q}(t)\right)\right]\\
& \leq &  2\overline{\gamma}(T,x) + \frac{1}{\varepsilon}  [\delta_{\lambda}\left( m^p, m^{q} \right) + D_1 T(A(p)+B(p)) ]\times e^{D_2 \,x\, T }. \nonumber
\end{eqnarray*}
Choosing $x$ large enough so that $\overline{\gamma}(T,x) \leq \varepsilon/8$ and $n_{\varepsilon}$ large enough to have both $A(p)$ and $B(p) \leq (\varepsilon^2/4 D_1\,T)e^{-D_2xT}$ and in a such a way that for all $p,q \geq n_{\varepsilon}$, $\delta_{\lambda}\left( m^p, m^{q} \right) \leq (\varepsilon^2/4)e^{-D_2xT}$, we conclude that (\ref{boundProba}) holds.\medskip

We deduce from (\ref{boundProba}) that the sequence of processes $\left( M^n _t\right)_{t\geq 0}$ is Cauchy in probability in $\mathbb D([0,\infty),\ell_{\lambda})$, endowed with the uniform norm in time on compact intervals. We are thus able to find a subsequence (not relabelled) and a $(\mathcal H_t)$-adapted process $(M(t))_{t\geq 0}$ belonging $a.s.$ to  $\mathbb D([0,\infty),\ell_{\lambda})$ such that for all $T>0$,
\begin{equation}\label{limProbn01}
\lim_{n\rightarrow \infty} \sup_{[0,T]}\delta_{\lambda}\left(M^n(t),M(t)\right) = 0.\,\,\,a.s.
\end{equation}

\noindent Setting now $\tau^x:= \inf\{t\geq 0: \|M(t)\|_{\lambda}\geq x\}$, due to Lebesgue Theorem, 
\begin{equation}\label{limE01}
\lim_{n\rightarrow \infty} \mathbb E\left[\sup_{[0,T\wedge \tau^x_n \wedge \tau^x]}\delta_{\lambda}\left(M^n(t),M(t)\right)\right] = 0.
\end{equation}
We have to show now that the limit process $(M(t))_{t\geq 0}$ defined by (\ref{limProbn01}) solves the equation $SDE(K,F,m,\mathcal N,\mathcal M)$ defined in (\ref{def_sto-process}). \medskip

We want to pass to the limit in (\ref{finiproc01}), it suffices to show that $\lim_{n\rightarrow \infty} \Delta_n(t) = 0$, where
\begin{eqnarray*}
 \Delta_n(t) & = & \mathbb E \Bigg[ \int_0^{t\wedge\tau^x_n\wedge \tau^x} \int_{i<j} \int_0^{\infty} \sum_{k\geq 1} 2^{-k} \big|\left( [c_{ij}\left(M(s-)\right)]_k - M_k(s-)]\right) \mathds 1_{\lbrace z \leq K(M_i(s-),M_j(s-))\rbrace} \\ 
& &  - \left([c_{ij}\left(M^n(s-)\right)]_k - M_k^n(s-)\right)  \mathds 1_{\lbrace z \leq K(M_i^n(s-),M_j^n(s-))\rbrace} \big|\mathcal  N(ds,d(i,j),dz) \\
&&+ \int_0^{t\wedge\tau^x_n\wedge \tau^x} \int_{i} \int_{\Theta} \int_0^{\infty}\sum_{k\geq 1} 2^{-k} \big| \left( [f_{i\theta}\left(M(s-)\right)]_k - [M(s-)]_k\right) \mathds 1_{ \lbrace z \leq F(M_i(s-))\rbrace} \\
& & - \left( [f_{i\psi_n(\theta)}\left(M^n(s-)\right)]_k - M_k^n(s-)\right) \mathds 1_{ \lbrace z \leq F(M_i^n(s-))\rbrace} \mathds 1_{\{\theta\in\Theta(n)\}}\big| \mathcal M(ds,di,d\theta,dz) \Bigg].
\end{eqnarray*}
Indeed, due to (\ref{limProbn01}), for all $x>0$ and for $n$ large enough, $a.s.$ $\tau^x_n \geq \tau^{x/2}$. Thus $M$ will solve $SDE(K,F,M(0),\mathcal N,\mathcal M)$ on the time interval $[0,\tau^{x/2})$ for all $x>0$, and thus on $[0,\infty)$ since $a.s.$ $\lim_{x\rightarrow\infty} \tau^x = \infty$, because $M\in\mathbb D([0,\infty),\ell_{\lambda})$.\medskip

Note that
\begin{eqnarray*}
&& \big|\left( [c_{ij}\left(M(s)\right)]_k - M_k(s)]\right) \mathds 1_{\lbrace z \leq K(M_i(s),M_j(s))\rbrace} -\left([c_{ij}\left(M^n(s)\right)]_k - M_k^n(s)\right)  \mathds 1_{\lbrace z \leq K(M_i^n(s),M_j^n(s))\rbrace}\big|\\
&&\hspace{1cm}\leq \Big|\left( [c_{ij}\left(M(s)\right)]_k - M_k(s)]\right) - \left([c_{ij}\left(M^n(s)\right)]_k - M_k^n(s)\right)\Big|\mathds 1_{\lbrace z \leq K(M_i(s),M_j(s))\rbrace}\\
&&\hspace{1.5cm} +  \big|[c_{ij}\left(M^n(s)\right)]_k - M_k^n(s) \big| \left|  \mathds 1_{\lbrace z \leq K(M_i(s),M_j(s))\rbrace} - \mathds 1_{\lbrace z \leq K(M_i^n(s),M_j^n(s))\rbrace} \right|
\end{eqnarray*}
and
\begin{eqnarray*}
&&\big| \left( [f_{i\theta}\left(M(s)\right)]_k - M_k(s)\right) \mathds 1_{ \lbrace z \leq F(M_i(s))\rbrace} - \left( [f_{i\psi_n(\theta)}\left(M^n(s)\right)]_k - M_k^n(s)\right) \mathds 1_{ \lbrace z \leq F(M_i^n(s))\rbrace} \mathds 1_{\{\theta\in\Theta(n)\}}\big|\\
&&\hspace{0.7cm}\leq \big| \left( [f_{i\theta}\left(M(s)\right)]_k - M_k(s)\right) - \left( [f_{i\theta}\left(M^n(s)\right)]_k - M_k^n(s)\right) \big|\mathds 1_{ \lbrace z \leq F(M_i(s))\rbrace}\\
&& \hspace{1.2cm}+ \big|\left( [f_{i\theta}\left(M^n(s)\right)]_k - [f_{i\psi_n(\theta)}\left(M^n(s)\right)]_k \right)\big| \mathds 1_{ \lbrace z \leq F(M_i(s))\rbrace}\\
&& \hspace{1.2cm}+\big| [f_{i\psi_n(\theta)}\left(M^n(s)\right)]_k - M_k^n(s) \big| \left|\mathds 1_{ \lbrace z \leq F(M_i(s))\rbrace} - \mathds 1_{ \lbrace z \leq F(M_i^n(s))\rbrace}\right|\\
&& \hspace{1.2cm}+ \big| [f_{i\psi_n(\theta)}\left(M^n(s)\right)]_k - M_k^n(s) \big|\mathds 1_{ \lbrace z \leq F(M_i^n(s))\rbrace} \mathds 1_{\{\theta\in\Theta(n)^c\}},
\end{eqnarray*}
where $\Theta(n)^c = \Theta\setminus \Theta(n)$. We thus obtain the following bound
\begin{equation*}
 \Delta_n(t) \leq A_n^c(t) + B_n^c(t) +A_n^f(t) + B_n^f(t) + C^f_n(t) + D^f_n(t).
\end{equation*}
First, $A_n^c(t) = \sum_{i<j}A^{ij}_n(t)$ with
\begin{eqnarray*}
A_n^{ij}(t) & = & \mathbb E\Bigg[\int_0^{t\wedge\tau^x_n\wedge \tau^x} K\left(M_i(s),M_j(s)\right) \sum_{k\geq 1} 2^{-k}  \\
& & \hspace{1cm} \left|\left( [c_{ij}\left(M(s)\right)]_k - M_k(s)]\right)- \left([c_{ij}\left(M^n(s)\right)]_k - M_k^n(s)\right)\right|ds  \Bigg],
\end{eqnarray*}
and using 
\begin{eqnarray*} 
&&\left|  \mathds 1_{\lbrace z \leq K(M_i(s),M_j(s))\rbrace} - \mathds 1_{\lbrace z \leq K(M_i^n(s),M_j^n(s))\rbrace} \right|  \\
&&  \hspace{1.5cm}=\mathds 1_{\lbrace K(M_i(s),M_j(s))\wedge K(M_i^n(s),M_j^n(s))\leq z \leq K(M_i(s),M_j(s))\vee K(M_i^n(s),M_j^n(s))\rbrace},
\end{eqnarray*}
\begin{eqnarray*}
B_n^c(t) & = & \mathbb E\Bigg[ \int_0^{t\wedge\tau^x_n\wedge \tau^x} \sum_{i<j}\left|K\left(M_i(s),M_j(s)\right)-K\left(M_i^n(s),M_j^n(s)\right) \right| \\
& & \sum_{k\geq 1} 2^{-k}  \left|[c_{ij}\left(M^n(s)\right)]_k - M_k^n(s)\right|ds \Bigg].
\end{eqnarray*}
For the fragmentation terms we have 
\begin{eqnarray*}
A_n^f(t) & = & \mathbb E\Bigg[\int_0^{t\wedge\tau^x_n\wedge \tau^x} \int_{\Theta} \sum_{i\geq 1} F\left(M_i(s)\right) \\
& & \sum_{k\geq 1} 2^{-k}  \left|\left([f_{i\theta}\left(M(s)\right)]_k - M_k(s)\right) - \left([f_{i\theta}\left(M^n(s)\right)]_k - M_k^n(s)\right)\right| \beta(d\theta) ds\Bigg], 
\end{eqnarray*}
\begin{eqnarray*}
B_n^f(t) & = & \mathbb E\Bigg[\int_0^{t\wedge\tau^x_n\wedge \tau^x} \int_{\Theta} \sum_{i\geq 1} F\left(M_i(s)\right)  \sum_{k\geq 1} 2^{-k}  \left|\left([f_{i\theta}\left(M^n(s)\right)]_k - [f_{i\psi_n(\theta)}\left(M^n(s)\right)]_k \right)\right| \beta(d\theta) ds\Bigg], 
\end{eqnarray*}
using
\begin{equation} 
\left|\mathds 1_{ \lbrace z \leq F(M_i(s))\rbrace} - \mathds 1_{ \lbrace z \leq F(M_i^n(s))\rbrace}\right|  = \mathds 1_{\lbrace F(M_i(s))\wedge F(M_i^n(s))\leq z \leq F(M_i(s))\vee F(M_i^n(s))\rbrace}, \nonumber
\end{equation}
\begin{eqnarray*}
C_n^f(t) & = & \mathbb E\Bigg[\int_0^{t\wedge\tau^x_n\wedge \tau^x} \int_{\Theta} \mathds 1_{\{\theta\in\Theta(n)\}}\sum_{i\geq 1}\left|F\left(M_i(s)\right)-F\left(M_i^n(s)\right) \right| \\
& & \sum_{k\geq 1} 2^{-k}  \left|[f_{i\psi_n(\theta)}\left(M^n(s)\right)]_k - M_k^n(s)\right| \beta(d\theta) ds\Bigg],
\end{eqnarray*}
and finally,
\begin{eqnarray*}
D_n^f(t) & = & \mathbb E\Bigg[\int_0^{t\wedge\tau^x_n\wedge \tau^x}\, \int_{\Theta}\,\mathds 1_{\{\theta\in\Theta(n)^c\}} \sum_{i\geq 1} F\left(M_i^n(s)\right) \\
& & \sum_{k\geq 1} 2^{-k}  \left|[f_{i\psi_n(\theta)}\left(M^n(s)\right)]_k - M_k^n(s)\right| \beta(d\theta)ds\Bigg], 
\end{eqnarray*}

We will show that each term converges to $0$ as $n$ tends to infinity. \medskip

Note first that from (\ref{limProbn01}) we have, $a.s.$ $\sup_{[0,t]}\|M(s)\|_1 \leq \underset{n\rightarrow \infty}{\limsup}\sup_{[0,t]}\|M^ n(s)\|_1$ and $a.s.$ $\sup_{[0,t]}\|M(s)\|_{\lambda} \leq\underset{n\rightarrow \infty}{\limsup} \sup_{[0,t]}\|M^ n(s)\|_{\lambda}$ and from Proposition \ref{Result_Prop} \textit{\ref{Prop_C01_i})}, we get $\sup_{n\geq1} \sup_{[0,t]} \|M^n(s)\|_1 \leq \|m\|_1$, implying for all $t\geq0$
\begin{equation}\label{M1limBounded}
\sup_{s\in[0,t]}\|M(s)\|_1 \leq \|m\|_1 :=a_m <\infty, \,\,a.s.,
\end{equation}
equivalently for $M^ n$, we have $a_{m^n} = \|m^n\|_1 \leq  \|m\|_1$. We set $\kappa_{a_m}$ and $\mu_{a_m}$ the constants for which the kernels $K$ and $F$ satisfy (\ref{Hyp_CoagK}) and (\ref{Hyp_FragK}). Finally, we set $\overline K_m = \sup_{[0,a_m]^2}K(x,y)$ and $\overline F_m = \sup_{[0,a_m]}F(x)$.  \medskip

We prove that $A_n^c(t)$ tends to $0$ using the Lebesgue dominated convergence Theorem. It suffices to show that:
\begin{enumerate}[a)]
\item for each $1\leq i < j$, $A_n^{ij}(t)$ tends to $0$ as $n$ tends to infinity,
\item \label{2c} $\lim_{k\rightarrow \infty} \limsup_{n\rightarrow \infty} \sum_{i+j \geq k}A_n^{ij}(t) = 0$.
\end{enumerate}
Now, for $A_n^{ij}(t)$ using (\ref{d2c}), (\ref{d_dlambda}), (\ref{M1limBounded}) and Proposition \ref{Result_Prop}. \textit{\ref{Prop_C01_i})}, we have
\begin{eqnarray*}
A_n^{ij}(t) & \leq & \overline{K}_m\mathbb E\left[ \int_0^{t\wedge\tau^x_n\wedge \tau^x} d\left(c_{ij}\left(M(s)\right),c_{ij}\left(M^n(s)\right)\right) + d\left(M(s),M^n(s)\right)\,ds  \right] \\
&\leq & \overline{K}_m \mathbb E\left[\int_0^{t\wedge\tau^x_n\wedge \tau^x} \left(2^{i} + 2^{j} + 1\right)  d\left(M(s),M^n(s)\right)\,ds  \right] \\
&\leq & C \,\overline{K}_m \left(2^{i} + 2^{j} + 1\right) \mathbb E\Bigg[   \int_0^{t\wedge\tau^x_n\wedge \tau^x} \left(\|M(s)\|_1^{1-\lambda} \vee \|M^n(s)\|_1^{1-\lambda}\right) \\
&& \hspace{7.5cm} \times \delta_{\lambda}\left(M(s),M^n(s)\right)\,ds \Bigg] \\
&\leq & C \,\overline{K}_m \left(2^{i} + 2^{j} + 1\right) t \|m\|_1^{1-\lambda}   \mathbb E\left[ \sup_{[0,t\wedge\tau^x_n\wedge \tau^x]} \delta_{\lambda}\left(M(s),M^n(s)\right) \right].
\end{eqnarray*}
which tends to $0$ as $n\rightarrow \infty$ due to (\ref{limE01}). On the other hand, using (\ref{d1c}) we have
\begin{eqnarray*}
A_n^{ij}(t) & \leq & \overline{K}_m  \mathbb E\left[ \int_0^{t\wedge\tau^x_n\wedge \tau^x} d\left(c_{ij}\left(M(s)\right),M(s)\right) + d\left(c_{ij}\left(M^n(s)\right),M^n(s)\right) \,ds\right] \\
&\leq &\frac{3\overline{K}_m}{2} 2^{-i} \int_0^t\, \mathbb E\left[  M_j(s)+M_j^n(s)\right]ds. 
\end{eqnarray*}
Since $\sum_{i\geq 1}2^{-i} = 1$ and $\sum_{j\geq1} \int_0^t\mathbb E[M_j(s)]ds\leq \|m\|_1 t$, \ref{2c}) reduces to
\begin{equation}
\lim_{k\rightarrow \infty} \limsup_{n\rightarrow \infty} \sum_{j\geq k} \int_0^t  \mathbb E[M_j^n(s)] ds = 0.\nonumber
\end{equation}
For each $k\geq 1$, since $M^n(s)$ and $M(s)$ belong to $\ell_1$ for all $s\geq 0$ $a.s$ and since the map $m \mapsto \sum_{j=1}^{k-1}m_j$ is continuous for the pointwise convergence topology,

\begin{eqnarray*}
\limsup_{n\rightarrow \infty} \int_0^t \mathbb E\left[\sum_{j\geq k} M^n_j(s) \right] & = & \int_0^t ds \left\lbrace \lim_{n\rightarrow \infty}\|M^n(s)\|_1 -\lim_{n\rightarrow \infty} \mathbb E\left[\sum_{j=1}^{k-1} M^n_j(s) \right]\right \rbrace ds\\
& = & \int_0^t \left\lbrace \|M(s)\|_1 - \mathbb E\left[\sum_{j=1}^{k-1} M_j(s) \right]\right \rbrace ds\\
& = & \int_0^t\mathbb E\left[\sum_{j=k}^{\infty} M_j(s) \right] ds.
\end{eqnarray*}
We easily conclude using that $a.s.$ $\|M(s)\|_1 <\|m\|_1$ for all $s\geq 0$.\medskip

Using (\ref{Hyp_CoagK}), (\ref{d1c}) and  Proposition \ref{Result_Prop}. \textit{\ref{Prop_C01_i})}, we obtain
\begin{eqnarray*}
B_n^c(t) & \leq & \kappa_{a_m}  \mathbb E\Bigg[ \int_0^{t\wedge\tau^x_n\wedge \tau^x}\sum_{i<j}\left[\left|M_i^n(s) ^{\lambda}- M_i(s)^{\lambda}\right|  + \left|M_j^n(s)^{\lambda} - M_j(s)^{\lambda}\right|\,ds \right] \\
& &\hspace{8cm}\times d\left(c_{ij}\left(M^n(s)\right), M^n(s)\right) \Bigg]\\
&\leq &  \frac{3}{2}\kappa_{a_m} \mathbb E\Bigg[\int_0^{t\wedge\tau^x_n\wedge \tau^x} \sum_{i<j}\Big[\left|M_i^n(s) ^{\lambda}- M_i(s)^{\lambda}\right|+ \left|M_j^n(s)^{\lambda} - M_j(s)^{\lambda}\right|\Big] 2^{-i} M_j^n(s)\,ds \Bigg] \\
& \leq & 3t\,\kappa_{a_m} \|m\|_1 \mathbb E\left[\sup_{[0,[t\wedge\tau^x_n\wedge \tau^x]}\delta_{\lambda}\left(M(s),M^n(s)\right) \right],
\end{eqnarray*}
which tends to $0$ as $n\rightarrow \infty$ due to (\ref{limE01}).\medskip

We use (\ref{d2f}) and (\ref{d_dlambda}) both with (\ref{M1limBounded}) and Proposition \ref{Result_Prop}. \textit{\ref{Prop_C01_i})} and (\ref{d1f}) to obtain
\begin{eqnarray*}
A_n^f(t) & \leq &  \overline{F}_m \mathbb E \Bigg[ \int_0^{t\wedge\tau^x_n\wedge \tau^x}  \sum_{i\geq 1} \int_{\Theta}\bigg[\bigg(d\left(f_{i\theta}\left(M(s)\right), f_{i\theta}\left(M^n(s)\right)\right) + d\left(M(s), M^n(s)\right)\bigg) \\
&& \hspace{2cm}\wedge\bigg(d\left(f_{i\theta}\left(M(s)\right), M(s)\right) + d\left(f_{i\theta}\left(M^n(s)\right), M^n(s)\right)\bigg) \bigg] \beta(d\theta)ds\Bigg] \\
&\leq & \overline{F}_m \mathbb E \Bigg\lbrace \int_0^{t\wedge\tau^x_n\wedge \tau^x}  \int_{\Theta} \sum_{i\geq 1}\Bigg[\bigg( 2C\|m\|_{\lambda}^{1-\lambda}\, \delta_{\lambda}\left(M(s), M^n(s)\right)\bigg) \wedge \\
&& \hspace{5.3cm}\bigg(2^{-i}(1-\theta_1)\left(M_i(s) + M_i^n(s)\right) \bigg) \Bigg] \beta(d\theta)ds\Bigg\rbrace. 
\end{eqnarray*}
We split the integral on $\Theta$ and the sum on $i$ into two parts. Consider $\Theta_{\varepsilon} = \{ \theta\in\Theta : \theta_1 \leq 1 -\varepsilon\}$ and $N\in\mathbb N$. Using (\ref{M1limBounded}) and Proposition \ref{Result_Prop}. \textit{\ref{Prop_C01_i})} and relabelling the constant $C$, we deduce
\begin{eqnarray*}
&& \int_{\Theta} \sum_{i\geq 1}\bigg[\bigg(C\|m\|_{\lambda}^{1-\lambda}\, \delta_{\lambda}\left(M(s), M^n(s)\right)\bigg) \wedge \bigg(2^{-i}(1-\theta_1)\left(M_i(s) + M_i^n(s)\right) \bigg) \bigg]\beta(d\theta)\\
&&  \hspace{1cm}\leq C\|m\|_{\lambda}^{1-\lambda} \int_{\Theta_{\varepsilon}} \sum_{i=1}^N \delta_{\lambda}\left(M(s), M^n(s)\right) \beta(d\theta) + \int_{\Theta_{\varepsilon}^c}(1-\theta_1)\beta(d\theta) \sum_{i\geq 1} \left(M_i(s)+ M_i^n(s)\right) \\
&& \hspace{1.5cm} + \int_{\Theta}\sum_{i> N}2^{-i} (1-\theta_1)\left(M_i(s)+ M_i^n(s)\right) \beta(d\theta) \\
&& \hspace{1cm} \leq C\|m\|^{1-\lambda}_1 N \beta(\Theta_{\varepsilon}) \delta_{\lambda}\left(M(s), M^n(s)\right) + 2 \|m\|_1  \int_{\Theta_{\varepsilon}^c}(1-\theta_1) \beta(d\theta)   \\
&& \hspace{1.5cm} + \,\,2 \|m\|_1  \int_{\Theta}(1-\theta_1)\beta(d\theta) \sum_{i> N} 2^{-i}.
\end{eqnarray*}
Note that $ \beta(\Theta_{\varepsilon}) = \int_{\Theta} \mathds 1_{\{1-\theta_1 \geq \varepsilon\}} \, \beta(d\theta) \leq \frac{1}{\varepsilon} \int_{\Theta} (1-\theta_1) \, \beta(d\theta) \leq\frac{1}{\varepsilon} \, C^{\lambda}_{\beta} < \infty$. Thus, we get 
\begin{eqnarray*}
A_n^f(t) & \leq &  \frac{t}{\varepsilon} C^{\lambda}_{\beta} N \overline{F}_m  C\|m\|^{1-\lambda}_1  \mathbb E\left[\sup_{[0,[t\wedge\tau^x_n\wedge \tau^x]}\delta_{\lambda}\left(M(s),M^n(s)\right) \right]  \\
&& + 2 t \overline{F}_m \|m\|_1  \int_{\Theta_{\varepsilon}^c}(1-\theta_1) \beta(d\theta) + 4 t \overline{F}_m \|m\|_1  C^{\lambda}_{\beta}\, 2^{-N}.
\end{eqnarray*}
Thus, due to (\ref{limE01}) we have for all $\varepsilon >0$ and $N \geq 1$,

\begin{equation*}
\limsup_{n \rightarrow \infty} A_n^f(t) \leq 2 t \overline{F}_m \|m\|_1  \int_{\Theta_{\varepsilon}^c}(1-\theta_1) \beta(d\theta) + 4 t \overline{F}_m \|m\|_1  C^{\lambda}_{\beta}\, 2^{-N}.
\end{equation*}
Since $\Theta_{\varepsilon}^c$ tends to the empty set as $\varepsilon \rightarrow 0$ we conclude using (\ref{Clambdabounds}) with (\ref{Hyp2_Beta}) and making $\varepsilon \rightarrow 0$ and $N \rightarrow \infty$.\medskip

Next, use (\ref{d3f}) and  Proposition \ref{Result_Prop}. \textit{\ref{Prop_C01_i})} to obtain  
\begin{equation*}
B_n^f(t) \leq  t \overline{F}_t \|m\|_1 \int_{\Theta}\,\sum_{k>n}\theta_k \beta(d\theta).
\end{equation*}
which tends to $0$ as $n\rightarrow \infty$ due to (\ref{Hyp1_Beta}).\medskip

\noindent Using (\ref{Hyp_FragK}), (\ref{d1f}) with (\ref{inebeta}) and (\ref{Clambdabounds}), (\ref{ineq_distance}), (\ref{d_dlambda}), (\ref{M1limBounded}) and  Proposition \ref{Result_Prop}. \textit{\ref{Prop_C01_i})}, we obtain
\begin{eqnarray*}
C_n^f(t) & \leq & 2\mu_{a_m} \mathbb E \left[ \int_0^{t\wedge\tau^x_n\wedge \tau^x}\int_{\Theta(n)}\,\sum_{i\geq 1}\left|[M_i(s)]^{\alpha}-[M_i^n(s)]^{\alpha} \right| 2^{-i} (1-\theta_1) M_i(s)\beta(d\theta)ds  \right]\\
&\leq & 2 \mu_{a_m}\,C^{\lambda}_{\beta} \mathbb E \left[ \int_0^{t\wedge\tau^x_n\wedge \tau^x} \sum_{i\geq 1}2^{-i}\left|M_i(s)-M_i^n(s)\right| \left([M_i^n(s)]^{\alpha} + [M_i(s)]^{\alpha} \right)\,ds\right]\\
&\leq & 2\mu_{a_m}\,C\, C^{\lambda}_{\beta} t \|m\|_1^{1-\lambda+\alpha}  \mathbb E \left[\sup_{[0,t\wedge\tau^x_n\wedge \tau^x]} \delta_{\lambda}\left(M(s),M^n(s)\right) \right],
\end{eqnarray*}
which tends to $0$ as $n\rightarrow \infty$ due to (\ref{limE01}).\medskip

\noindent Finally, we use (\ref{d1f}) with (\ref{inebeta}) and (\ref{Clambdabounds}) and  Proposition \ref{Result_Prop}. \textit{\ref{Prop_C01_i})}, to obtain
\begin{equation}
D_n^f(t) \leq 2t\,\overline{F}_t \|m\|_1  \int_{\Theta}\mathds 1_{\{\theta\in\Theta(n)^c\}} (1-\theta_1)\beta(d\theta), \nonumber
\end{equation}
which tends to $0$ as $n$ tends to infinity since $\int_{\Theta}\left(1-\theta_1\right)\beta(d\theta)\leq C^{\lambda}_{\beta}$ and $\Theta(n)^c$ tends to the empty set.\medskip

This ends the proof of Theorem \ref{The_EandU}.
\end{proof}

\subsection{Conclusion}

It remains to conclude the proof of Theorem \ref{Result_Theo}.\medskip

\noindent 
We start with some boundedness of the operator $\mathcal L_{K,F}^{\beta}$.

\begin{lemma}\label{LemmaLBounded}
Let $\lambda \in(0,1]$, $\alpha\geq 0$, the coagulation kernel $K$, fragmentation kernel $F$ and the measure $\beta$ satisfying Hypotheses \ref{HypSto}. Let $\Phi:\ell_{\lambda}\rightarrow \mathbb R$ satisfy, for all $m, \tilde{m} \in\ell_{\lambda}$, $|\Phi(m)|\leq a$ and $|\Phi(m) - \Phi(\tilde m)| \leq a d(m,\tilde m)$. Recall (\ref{generator}). Then $m\mapsto \mathcal L_{K,F}^{\beta}\Phi(m)$ is bounded on $\{m\in\ell_{\lambda},\|m\|_{\lambda} \leq c\}$ for each $c>0$.
\end{lemma}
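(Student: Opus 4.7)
The plan is to reduce the statement to bounds already available in the proof of Lemma \ref{lemma_wellDef}, exploiting the fact that the Lipschitz assumption on $\Phi$ with respect to $d$ converts the increments $\Phi(c_{ij}(m)) - \Phi(m)$ and $\Phi(f_{i\theta}(m)) - \Phi(m)$ into $d(c_{ij}(m),m)$ and $d(f_{i\theta}(m),m)$, the very distances that were controlled there.

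First I would fix $c > 0$ and take any $m \in \ell_{\lambda}$ with $\|m\|_{\lambda} \leq c$. By Remark \ref{inclusion}, $\|m\|_1 \leq c^{1/\lambda} =: a_c$, and in particular $\sup_i m_i \leq a_c$. Since $K$ and $F$ are locally bounded on $[0,\infty)^2$ and $[0,\infty)$, the quantities $\overline{K}_c := \sup_{(x,y)\in[0,a_c]^2} K(x,y)$ and $\overline{F}_c := \sup_{x\in[0,a_c]} F(x)$ are finite and depend only on $c$ (through $a_c$).

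Using the Lipschitz property of $\Phi$ and the appendix inequality (\ref{d1c}) (as exploited in Lemma \ref{lemma_wellDef}), I would bound the coalescence part of the generator by
\begin{equation*}
\sum_{i<j} K(m_i,m_j)\,|\Phi(c_{ij}(m))-\Phi(m)| \;\leq\; a\,\overline{K}_c \sum_{i<j} d(c_{ij}(m),m) \;\leq\; \tfrac{3\,a\,\overline{K}_c}{2} \sum_{i\geq 1} 2^{-i} \sum_{j>i} m_j \;\leq\; \tfrac{3\,a\,\overline{K}_c}{2}\,\|m\|_1.
\end{equation*}
In the same spirit, combining the Lipschitz property with inequality (\ref{d1f}) and with (\ref{Clambdabounds}), I would bound the fragmentation part by
\begin{equation*}
\sum_{i\geq 1} F(m_i) \int_{\Theta} |\Phi(f_{i\theta}(m))-\Phi(m)|\,\beta(d\theta) \;\leq\; a\,\overline{F}_c\, C^{\lambda}_{\beta}\, \sum_{i\geq 1} 2^{-i} m_i \;\leq\; a\,\overline{F}_c\, C^{\lambda}_{\beta}\,\|m\|_1.
\end{equation*}
Adding and using $\|m\|_1 \leq c^{1/\lambda}$, one obtains
\begin{equation*}
|\mathcal L_{K,F}^{\beta}\Phi(m)| \;\leq\; a\,c^{1/\lambda}\bigl(\tfrac{3}{2}\overline{K}_c + C^{\lambda}_{\beta}\,\overline{F}_c\bigr),
\end{equation*}
which is independent of $m$ in the ball $\{\|m\|_{\lambda} \leq c\}$.

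There is no real obstacle here: the estimate is the deterministic counterpart of the almost sure computation already carried out in the proof of Lemma \ref{lemma_wellDef}, with $|\Phi(m)-\Phi(\tilde m)|\leq a\,d(m,\tilde m)$ playing the role of $\sum_{k\geq 1}2^{-k}|m_k-\tilde m_k|$. Note that the pointwise bound $|\Phi|\leq a$ is not needed for this argument; it would however give the crude alternative bound $2a\bigl(\sum_{i<j}K(m_i,m_j) + \beta(\Theta)\sum_i F(m_i)\bigr)$, which is typically worse (and requires $\beta(\Theta)<\infty$).
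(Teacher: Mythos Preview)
Your argument is essentially identical to the paper's own proof: both fix $c>0$, set $a_c=c^{1/\lambda}$, use the local bounds $\overline K_c,\overline F_c$, apply the Lipschitz hypothesis to reduce to $d(c_{ij}(m),m)$ and $d(f_{i\theta}(m),m)$, and then invoke (\ref{d1c}) and (\ref{d1f}) with (\ref{Clambdabounds}) to conclude with the same final bound. The only discrepancy is a harmless factor of $2$ in the fragmentation estimate (since (\ref{d1f}) reads $d(f_{i\theta}(m),m)\leq 2(1-\theta_1)2^{-i}m_i$, the paper obtains $2a\,\overline F_c\,C_\beta^\lambda\|m\|_1$ rather than $a\,\overline F_c\,C_\beta^\lambda\|m\|_1$).
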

\begin{proof}
This Lemma is a straightforward consequence of the hypotheses on the kernels and Lemma \ref{ddl_distances}. Let $c>0$ be fixed, and set $A:=c^{1/\lambda}$. Notice that if $\|m\|_{\lambda}\leq c$, then for all $k\geq1$ $m_k\leq A$.\medskip

Setting $\sup_{[0,A]^2}K(x,y) = \overline K$ and $\sup_{[0,A]}F(x) = \overline F $. We use (\ref{d1c}) and (\ref{d1f}) with (\ref{inebeta}) and (\ref{Clambdabounds}), and deduce that for all $m\in\ell_{\lambda}$ such that $\|m\|_{\lambda}\leq c$,
\begin{eqnarray*}
|\mathcal L_{K,F}^{\beta}\Phi(m)|& \leq & \overline{K}\sum_{1\leq i < j < \infty}  \left| \Phi\left(c_{ij}(m)\right) - \Phi(m) \right|+ \overline{F} \sum_{i\geq 1} \int_{\Theta} \left| \Phi\left(f_{i\theta}(m)\right) - \Phi(m) \right| \beta(d\theta)\\
& \leq & a \overline{K}\sum_{1\leq i < j < \infty} d(c_{ij}(m),m)+a\overline{F}\int_{\Theta}\sum_{i\geq 1}d(f_{i\theta}(m),m)\beta(d\theta)\\
&\leq & \frac{3}{2} a \overline{K}\,\|m\|_{1} + 2a\,\overline{F}\,C^{\lambda}_{\beta} \|m\|_{1} \,\,\, \leq \,\,\, \left( \frac{3}{2} \overline{K}+ 2\overline{F} \,C^{\lambda}_{\beta}\right) a c^{1/\lambda}.
\end{eqnarray*}
\end{proof}
\noindent Finally, it remains to conclude the proof of Theorem \ref{Result_Theo}.\medskip

\noindent \textbf{Proof of Theorem \ref{Result_Theo}.} We consider the Poisson measures $N$ and $M$ as in Definition \ref{Def_Proc}., and we fix $m\in\ell_{\lambda}$. We consider $M(t) := M(m,t)$ a solution to  $SDE(K,F,m,\mathcal N,\mathcal M)$ built in Section \ref{EandUSDE}. $M$ is a strong Markov Process, since it solves a time-homogeneous Poisson-driven $S.D.E.$ We now check the points \textit{\ref{ResultTheo_i})} and \textit{\ref{ResultTheo_ii})}.\medskip

Consider any sequence $m^n\in \ell_{0+}$ such that $\lim_{n\rightarrow \infty}\delta_{\lambda}(m^n,m) = 0$ and $M^n(t) := M(m^n,t)$ the unique solution to $SDE(K,F,m^n,\mathcal N,\mathcal M_n)$ obtained in Proposition \ref{Finite_PoissonProc}. Denote by $\tau^x=\inf\{t\geq 0, \|M(m,t)\|_{\lambda}\geq x\}$ and by $\tau^x_n$ the stopping time concerning $M^n$. We will prove that for all $T\geq 0$ and $\varepsilon >0$
\begin{equation}\label{boundProbaMMn}
\lim_{n\rightarrow \infty} P\left[\sup_{[0,T]}\delta_{\lambda}\left(M(t),M^n(t)\right) > \varepsilon \right] =0.
\end{equation}

For this, consider the sequence $m^{(n)}\in\ell_{0+}$ defined by $m^{(n)}=(m_1,\cdots,m_n,0,\dots)$ and $M^{(n)}(t):= M(m^{(n)},t)$ the solution to $SDE(K,F,m^{(n)},\mathcal N,\mathcal M_n)$ obtained in Proposition \ref{Finite_PoissonProc}. and denote by $\tau^x_{(n)}$ the stopping time concerning $M^{(n)}$.\medskip

\noindent First, note that since $\lim_{n\rightarrow \infty}\delta_{\lambda}(m^{(n)},m) = \lim_{n\rightarrow \infty}\delta_{\lambda}(m^n,m) = 0$, we deduce that $\sup_{n\geq 1} \|m^{(n)}\|_{\lambda}<\infty$ and from Lemma \ref{tainfini}. that for all $t\geq 0$,
\begin{eqnarray}
\lim_{x\rightarrow \infty}\gamma_1(t,x) & = & 0 \,\,\, \textrm{where} \,\,\, \gamma_1(t,x) := \sup_{n\geq 1} P[\tau^x_{(n)}\leq t], \label{alpha1}\\
\lim_{x\rightarrow \infty}\gamma_2(t,x) & = & 0 \,\,\, \textrm{where} \,\,\, \gamma_2(t,x) := \sup_{n\geq 1} P[\tau^x_n\leq t].\label{alpha2}
\end{eqnarray}
Thus, using Proposition \ref{Result_Prop}. \textit{\ref{Prop_C01_ii})} we get for all $x>0$ 
\begin{eqnarray*}
&& P\left[\sup_{[0,T]}\delta_{\lambda}\left(M(t),M^n(t)\right) > \varepsilon \right]\\
& & \hspace{1cm}\leq \,\, P\left[\sup_{[0,T]}\delta_{\lambda}\left(M(t),M^{(n)}(t)\right) > \frac{\varepsilon}{2}\right]+ P\left[\sup_{[0,T]} \delta_{\lambda}\left(M^{(n)}(t),M^{n}(t)\right) > \frac{\varepsilon}{2} \right] \\
&& \hspace{1cm}\leq \,\,  P[\tau^x \leq T] +\gamma_1(T,x) +\frac{2}{\varepsilon} \mathbb E\left[\sup_{[0,T\wedge \tau^x_{(n)} \wedge \tau^x]}\delta_{\lambda}\left(M(t),M^{(n)}(t)\right)\right] \\
&& \hspace{1.5cm} + \gamma_1(T,x) + \gamma_2(T,x) + \frac{2}{\varepsilon} e^{C(x+1)T } \delta_{\lambda}\left(m^{(n)}, m^n \right).
\end{eqnarray*}

We first make $n$ tend to infinity and use (\ref{limE01}), then $x$ to infinity and use (\ref{alpha1}) and (\ref{alpha2}). We thus conclude that (\ref{boundProbaMMn}) holds.\medskip

We may prove point \textit{\ref{ResultTheo_ii})} using a similar computation that for \textit{\ref{ResultTheo_i})}. The proof is easier since we do not need to use a triangle inequality.\medskip

Finally, consider $(M(m,t))_{t\geq 0}$ solution to $SDE(K,F,m,\mathcal N,\mathcal M)$ and the sequence of stopping times $(\tau^{x_n})_{n\geq 1}$ where $\tau^{x_n}=\inf\{t\geq 0, \|M(m,t)\|_{\lambda}\geq x_n\}$, with $x_n = n$. 
Since $M\in\mathbb D([0,\infty),\ell_{\lambda})$, we have that  $(\tau^{x_n})_{n\geq 1}$ is non-decreasing and $\tau^{x_n} \underset{n\rightarrow \infty}{\longrightarrow} \infty$ and from Lemma \ref{LemmaLBounded}. we deduce that $(\mathcal L^{\beta}_{K,F}\Phi(M(m,s)))_{s\in[0,\tau^{x_n})}$ is uniformly bounded.\medskip

We thus apply It\^o's Formula to $\Phi(M(m,t))$ on the interval $[0,t\wedge \tau^{x_n})$ to obtain
\begin{eqnarray*}
&& \Phi(M(m,t\wedge\tau^{x_n})) - \Phi(m)\,\, = \\
 & & \int_0^{t\wedge\tau^{x_n}} \int_{i<j} \int_0^{\infty} \left[ \Phi\left(c_{ij}\left(M(m,s-)\right)\right) - \Phi\left(M(m,s-)\right)\right]  \mathds 1_{\lbrace z \leq K(M_i(m,s-),M_j(m,s-))\rbrace}\\
 && \hspace{11.3cm}\tilde{\mathcal N}(dt,d(i,j),dz) \\
&&+ \int_0^{t\wedge\tau^{x_n}} \int_{i} \int_{\Theta} \int_0^{\infty} \left[ \Phi\left(f_{i\theta}\left(M(m,s-)\right)\right) - \Phi\left(M(m,s-)\right)\right] \mathds 1_{ \lbrace z \leq F(M_i(m,s-))\rbrace}\\
 && \hspace{11.3cm} \tilde{\mathcal M}(dt,di,d\theta,dz)\\
&& + \int_0^{t\wedge\tau^{x_n}} \mathcal L_{K,F}^{\beta}\left(M(m,s)\right) ds,
\end{eqnarray*}
where $\tilde{\mathcal N}$ and $\tilde{\mathcal M}$ are two compensated Poisson measures and point \textit{\ref{ResultTheo_iii})} follows. \medskip

This ends the proof of Theorem \ref{Result_Theo}.\medskip

I would like to express my deepest thanks to my Ph.D. advisor Prof. Nicolas Fournier for his insightful comments and advices during the preparation of this work. I would like also to thank B\'en\'edicte Haas and James R. Norris for the lecture and their remarks. Finally, I sincerely thank the anonymous referee for pointing out many problems in the first version of this paper and for helping improving it.

\begin{appendix}

\section[Estimates concerning the distances]{Estimates concerning $c_{ij}$, $f_{i\theta}$, $d$ and $\delta_{\lambda}$}\label{appendix}
\setcounter{equation}{0}
Here we put all the auxiliary computations needed in Sections \ref{Proof} and \ref{EandUSDE}.\medskip

\begin{lemma}
Fix $\lambda \in (0,1]$. Consider any pair of finite permutations $\sigma$, $\tilde{\sigma}$ of $\mathbb N$. Then for all $m$ and $\tilde{m}\in \ell_{\lambda}$,
\begin{eqnarray}
d(m,\tilde{m}) &\leq &\sum_{k\geq 1}2^{-k} |m_{k} - \tilde{m}_{\tilde{\sigma}(k)}|,  \label{permutationd} \\
\delta_{\lambda}(m,\tilde{m}) &\leq &\sum_{k\geq 1} |m_{\sigma(k)}^{\lambda} - \tilde{m}^{\lambda}_{\tilde{\sigma}(k)}|.\label{lemma_permutation}
\end{eqnarray}
\end{lemma}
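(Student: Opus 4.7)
The plan is to handle the two estimates separately.

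Inequality \eqref{lemma_permutation} is immediate from the definition of $\delta_{\lambda}$ as an infimum over pairs of finite permutations:
\[
\delta_{\lambda}(m,\tilde m)=\inf_{\pi,\rho\in\mathrm{Perm}(\mathbb N)}\sum_{k\ge 1}\bigl|m_{\pi(k)}^{\lambda}-\tilde m_{\rho(k)}^{\lambda}\bigr|.
\]
Specializing this infimum to the particular pair $(\pi,\rho)=(\sigma,\tilde\sigma)$ already yields the announced bound, and nothing else is needed.

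For \eqref{permutationd}, the first step is a reduction to the case of a single transposition. Since $\tilde\sigma$ is a finite permutation, it factors as a finite product of transpositions, and a short induction on the number of factors transfers the bound from the one-transposition case to the general one. It then suffices to treat $\tilde\sigma=(i,j)$ with $i<j$: all other indices contribute identically on both sides, and the claim collapses to
\[
2^{-i}\bigl|m_i-\tilde m_i\bigr|+2^{-j}\bigl|m_j-\tilde m_j\bigr|\;\le\;2^{-i}\bigl|m_i-\tilde m_j\bigr|+2^{-j}\bigl|m_j-\tilde m_i\bigr|.
\]

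To verify this reduced inequality I would exploit the monotonicity $m_i\ge m_j$ and $\tilde m_i\ge\tilde m_j$ inherited from $m,\tilde m\in\mathcal S^{\downarrow}$. A convenient way to organize the argument is to introduce the auxiliary function $g(x):=|x-\tilde m_i|-|x-\tilde m_j|$, which is non-increasing on $\mathbb R$ (it equals $\tilde m_i-\tilde m_j$ on $(-\infty,\tilde m_j]$, interpolates linearly with slope $-2$ on $[\tilde m_j,\tilde m_i]$, and equals $\tilde m_j-\tilde m_i$ on $[\tilde m_i,\infty)$); the desired estimate rewrites as $2^{-i}g(m_i)\le 2^{-j}g(m_j)$, and one concludes by a short case-by-case check on the position of the points $m_i,m_j$ relative to the interval $[\tilde m_j,\tilde m_i]$.

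The hard part will be precisely this last step. Unlike the classical unweighted rearrangement inequality --- where one has equality --- here the weights $2^{-i}>2^{-j}$ are unequal and ``favor'' the small index, so the mere fact that $g(m_i)\le g(m_j)$ (coming from monotonicity of $g$) is not enough on its own: one must carefully track the signs of $g(m_i)$ and $g(m_j)$ against the sizes of the weights to ensure that the weighted combination tips in the right direction.
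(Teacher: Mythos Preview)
Your handling of \eqref{lemma_permutation} is correct: it is nothing more than the definition of $\delta_\lambda$ as an infimum over pairs of permutations, and the paper's own proof is merely a citation to \cite[Lemma~3.1]{Sto-Coal}, which is exactly this unweighted rearrangement statement.

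For \eqref{permutationd}, however, your suspicion that the final case check is ``the hard part'' is well founded: that step cannot be carried out, and in fact \eqref{permutationd} is false as stated. Take $m=(2,1,0,\ldots)$, $\tilde m=(4,3,0,\ldots)$ and $\tilde\sigma=(1\;2)$; then $d(m,\tilde m)=\tfrac12\cdot2+\tfrac14\cdot2=\tfrac32$, while $\sum_{k\ge1}2^{-k}|m_k-\tilde m_{\tilde\sigma(k)}|=\tfrac12\cdot1+\tfrac14\cdot3=\tfrac54<\tfrac32$. In your own notation this is precisely the case $g(m_i)=g(m_j)=1>0$ together with $2^{-i}>2^{-j}$, so $2^{-i}g(m_i)>2^{-j}g(m_j)$ and no case analysis will rescue it --- the weights $2^{-k}$ genuinely destroy the rearrangement inequality, because for a \emph{weighted} $\ell^1$ distance between two decreasing sequences the identity pairing need not be optimal. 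The paper's appeal to \cite[Lemma~3.1]{Sto-Coal} does not justify \eqref{permutationd}, since that lemma concerns only the unweighted $\delta_\lambda$. Where \eqref{permutationd} is actually invoked later (the proof of \eqref{d1f}) one can substitute the correct chain $d(m,\tilde m)\le\delta_1(m,\tilde m)\le\sum_{k}|m_{\sigma(k)}-\tilde m_{\tilde\sigma(k)}|$ coming from \eqref{d_dlambda} and \eqref{lemma_permutation} with $\lambda=1$; this drops the factor $2^{-i}$ from the bound in \eqref{d1f} but that loss is harmless for all subsequent estimates, and is in fact exactly the route the paper itself takes for \eqref{d2f} and \eqref{d3f}.
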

This lemma is a consequence of \cite[Lemma 3.1]{Sto-Coal}.\medskip

We also have the following inequality: for all $\alpha,\beta > 0$, there exists a positive constant $C = C_{\alpha,\beta}$ such that for all $x,y \geq 0$,
\begin{equation}\label{ineq_distance}
(x^{\alpha} + y^{\alpha}) |x^{\beta} - y^{\beta}| \leq 2 |x^{\alpha + \beta} - y^{\alpha + \beta} | \leq C (x^{\alpha} + y^{\alpha})  |x^{\beta}-y^{\beta}|.
\end{equation}

We now give the inequalities concerning the action of $c_{ij}$ and $f_{i\theta}$ on $\delta_{\lambda}$ and $\|\cdot\|_{\lambda}$.

\begin{lemma}
Let $\lambda \in(0,1]$ and $\theta \in \Theta$. Then for all $m$ and $\tilde{m}\in \ell_{\lambda}$, all $1\leq i < j < \infty$,

\begin{eqnarray}
\| c_{ij}(m)\|_{\lambda} & = & \|m\|_{\lambda} + (m_i + m_j)^{\lambda} - m_i^{\lambda} - m_j^{\lambda}\,\,\leq\,\,\|m\|_{\lambda}, \label{C1_01}\\
\| f_{i\theta}(m)\|_{\lambda} & = & \|m\|_{\lambda} +  m_i^{\lambda}\left(\sum_{k\geq 1} \theta_k^{\lambda} - 1 \right), \label{F1_01}\\
\delta_{\lambda}(c_{ij}(m),m) & \leq & 2m_j^{\lambda},  \label{C2_01}\\
\delta_{\lambda}(f_{i\theta}(m),m) & \leq & m_i^{\lambda}\left[\sum_{k\geq 2} \theta_k^{\lambda} + \left(1-\theta_1^{\lambda}\right)\right], \label{F2_01}\\
\delta_{\lambda}(c_{ij}(m),c_{ij}({\tilde{m}})) & \leq & \delta_{\lambda}(m,\tilde{m}), \label{C3_01} \\
\delta_{\lambda}(f_{i\theta}(m),f_{i\theta}({\tilde{m}})) & \leq & \delta_{\lambda}(m,\tilde{m}) + |m_i^{\lambda} - \tilde{m}_i^{\lambda}|\left(\sum_{k\geq 1} \theta_k^{\lambda} - 1 \right).  \label{F3_01}
\end{eqnarray}
On the other hand, recall (\ref{projector}), we have, for $u,v\in \mathbb N$ with $1\leq u < v$,
\begin{eqnarray}
\label{dln} \\
\delta_{\lambda}(f_{i\psi_u(\theta)}(m),f_{i\psi_{v}(\theta)}(m)) & \leq & \sum_{k=u+1}^v \theta_{k}^{\lambda} m_i^{\lambda}.\nonumber
\end{eqnarray}
\end{lemma}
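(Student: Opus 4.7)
The plan is to treat the seven inequalities one at a time, observing that three groups of arguments suffice: (a) direct computation of $\|\cdot\|_\lambda$ under the two local operations, (b) explicit couplings (permutations) combined with the variational inequality (\ref{lemma_permutation}), and (c) one nontrivial subadditivity estimate for the map $x\mapsto x^\lambda$ that powers the contraction property (\ref{C3_01}).

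\textbf{The equalities (\ref{C1_01}) and (\ref{F1_01}).} These are computations. The multiset $c_{ij}(m)$ differs from $m$ by removing the entries $m_i,m_j$ and inserting $m_i+m_j$, so the $\lambda$-norm changes exactly by $(m_i+m_j)^\lambda-m_i^\lambda-m_j^\lambda$; concavity of $x\mapsto x^\lambda$ on $[0,\infty)$ for $\lambda\in(0,1]$ gives the subadditivity $(m_i+m_j)^\lambda\le m_i^\lambda+m_j^\lambda$ and hence the last inequality. For $f_{i\theta}$ one removes $m_i$ and inserts $\theta_1 m_i,\theta_2 m_i,\ldots$, which changes $\|\cdot\|_\lambda$ by $m_i^\lambda(\sum_k\theta_k^\lambda-1)$.

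\textbf{The pairing bounds (\ref{C2_01}), (\ref{F2_01}), (\ref{F3_01}) and (\ref{dln}).} I would apply (\ref{lemma_permutation}) with explicit pairings. For (\ref{C2_01}) I pair $m_k$ with $m_k$ for $k\ne i,j$, pair $m_i+m_j$ in $c_{ij}(m)$ with $m_i$ in $m$, and pair $m_j$ in $m$ with $0$; this yields $|(m_i+m_j)^\lambda-m_i^\lambda|+m_j^\lambda\le 2m_j^\lambda$ by subadditivity. For (\ref{F2_01}) I pair $\theta_1 m_i$ with $m_i$ and $\theta_k m_i$ ($k\ge 2$) with $0$, giving $(1-\theta_1^\lambda)m_i^\lambda+\sum_{k\ge 2}\theta_k^\lambda m_i^\lambda$. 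For (\ref{F3_01}) I pair $m_k$ with $\tilde m_k$ for $k\ne i$ and $\theta_k m_i$ with $\theta_k\tilde m_i$ for every $k$; the sum factors as $\sum_{k\ne i}|m_k^\lambda-\tilde m_k^\lambda|+|m_i^\lambda-\tilde m_i^\lambda|\sum_k\theta_k^\lambda$, and a re-arrangement produces the claimed form. For (\ref{dln}) the two sequences differ only in the slots containing $\theta_{u+1}m_i,\ldots,\theta_v m_i$, so pairing common entries with themselves and the extra $v-u$ entries with $0$ gives the bound.

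\textbf{The contraction (\ref{C3_01}).} This is the step I expect to require the most care, because one must use a pairing in which \emph{two} positions of $m$ and two positions of $\tilde m$ are simultaneously merged. Using the pairing $m_k\leftrightarrow\tilde m_k$ for $k\ne i,j$ and $m_i+m_j\leftrightarrow\tilde m_i+\tilde m_j$, (\ref{lemma_permutation}) reduces the claim to the pointwise inequality
\begin{equation*}
\bigl|(m_i+m_j)^\lambda-(\tilde m_i+\tilde m_j)^\lambda\bigr|\;\le\;|m_i^\lambda-\tilde m_i^\lambda|+|m_j^\lambda-\tilde m_j^\lambda|.
\end{equation*}
I would prove this by observing that for $\lambda\in(0,1]$ and any $c\ge 0$ the function $x\mapsto(x+c)^\lambda-x^\lambda$ is non-increasing on $[0,\infty)$ (its derivative is $\lambda[(x+c)^{\lambda-1}-x^{\lambda-1}]\le 0$). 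A case split on the signs of $m_i-\tilde m_i$ and $m_j-\tilde m_j$ (two cases up to symmetry) then yields the estimate: when the two signs agree, monotonicity gives it directly; when they differ, one writes $(m_i+m_j)^\lambda-(\tilde m_i+\tilde m_j)^\lambda=[(m_i+m_j)^\lambda-(\tilde m_i+m_j)^\lambda]+[(\tilde m_i+m_j)^\lambda-(\tilde m_i+\tilde m_j)^\lambda]$ and bounds each bracket separately by $|m_i^\lambda-\tilde m_i^\lambda|$ and $|m_j^\lambda-\tilde m_j^\lambda|$ respectively, again via the monotonicity lemma. Summing over $k$ then gives $\delta_\lambda(c_{ij}(m),c_{ij}(\tilde m))\le\sum_k|m_k^\lambda-\tilde m_k^\lambda|=\delta_\lambda(m,\tilde m)$, where the last equality comes from the sorted-sequence identity in (\ref{def_distance}).
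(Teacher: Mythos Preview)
Your approach is correct and follows the same overall strategy as the paper: use the variational bound (\ref{lemma_permutation}) with explicit pairings. For (\ref{C2_01}) and (\ref{C3_01}) the paper simply cites \cite[Lemma A.2]{Sto-Coal2}, so your direct argument for (\ref{C3_01}) via the monotonicity of $x\mapsto(x+c)^\lambda-x^\lambda$ is in fact more self-contained than what the paper provides.

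There is one technical point where your sketch is looser than the paper. In your treatment of (\ref{F2_01}), (\ref{F3_01}) and (\ref{dln}) you pair the extra fragments $\theta_k m_i$ (for $k\ge 2$, or $k>u$) with ``$0$'' in the other sequence. But (\ref{lemma_permutation}) is stated for \emph{finite} permutations of the index set $\mathbb N$, and when $m\in\ell_\lambda\setminus\ell_{0+}$ there are no zero entries available; your multiset pairing is then not realizable as a bijection of $\mathbb N$ at all, because one side has strictly more nonzero entries. The paper deals with this by introducing a parameter $p$, keeping only the first $p$ fragments $\theta_1 m_i,\ldots,\theta_p m_i$, writing down a genuine finite permutation that inserts them at an explicit location $l=l(m,p)$ in the ordered sequence, and then letting $p\to\infty$; the bound you wrote down emerges as the limit once the tail contributions $\sum_{k>l}m_k^\lambda$ (and, for (\ref{F3_01}), $\sum_{k>p}\theta_k^\lambda$) are shown to vanish. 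Your pairing is morally the $p=\infty$ endpoint of this construction, so the fix is routine, but it should be made explicit.
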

Note that in the case $\sum_{k\geq 1} \theta_k^{\lambda} - 1 <0$, we have that $\|\cdot\|_{\lambda}$ and $\delta_{\lambda}$ are respectively, decreasing and contracting under the action of fragmentation and the calculations in precedent sections would be simpler.

\begin{proof}

\noindent First (\ref{C1_01}) and (\ref{F1_01}) are evident. Next, (\ref{C2_01}) and (\ref{C3_01}) are proved in \cite[Lemma A.2]{Sto-Coal2}. \medskip

To prove (\ref{F2_01}) let $\theta = (\theta_1,\cdots) \in \Theta$, $i\geq1$ and $p\geq 2$ and set $l := l(m) = \min\{k\geq 1:m_k \leq \theta_p\,m_i\}$, we consider the largest particle of the original system (before dislocation of $m_i$) that is smaller than the $p$-th fragment of $m_i$, this is $m_l$. Consider now $\sigma$, the finite permutation of $\mathbb N$ that achieves:

\begin{eqnarray}\label{sigma}
&&\\
&&(f_k)_{k\geq 1} \,\, := \,\, \left(\left[f_{i\theta}(m)\right]_{\sigma(k)}\right)_{k\geq 1} \nonumber\\
&&=\,\, (m_1,\cdots,m_{i-1},\theta_1 m_i,m_{i+1},\cdots, m_{l-1},m_{l},\theta_2 m_i,\theta_3 m_{i},\cdots, \theta_p m_i, \left[f_{i\theta}(m)\right]_{l+1}, \cdots ).  \nonumber
 \end{eqnarray}
It suffices to compute the $\delta_{\lambda}$-distance of the sequences $(f_k)_k$ and $(m_k)_k$:
\begin{equation}
 \begin{array}{cccccccccccccc}
 m_1&\cdots&m_{i-1}&\theta_1 m_i&m_{i+1}&\cdots& m_{l-1}& m_{l}&\theta_2 m_i& \theta_3 m_{i}&\cdots& \theta_p m_i &f_{l+p} & \cdots  \\
 m_1&\cdots&m_{i-1}& m_i& m_{i+1}&\cdots& m_{l-1}& m_l& m_{l+1}& m_{l+2}&\cdots &m_{l+p-1} &m_{l+p}& \cdots  
 \end{array}
\end{equation}
Thus, using (\ref{lemma_permutation}), we have
\begin{eqnarray*}
\delta_{\lambda}(f_{i\theta}(m),m) & \leq & \sum_{k\geq 1} \left|  f_k^{\lambda} - m_k^{\lambda} \right| \,\,\, = \,\,\, \left(\sum_{k = 1}^{l}+ \sum_{k = l+1}^{l+p-1}+\sum_{k \geq l+p} \right)\left| f_k^{\lambda}  - m_k^{\lambda} \right| \\
& \leq &  (1-\theta_1^{\lambda})m_i^{\lambda} + \sum_{k = l+1}^{l+p-1} \left| \theta_{k-l+1}^{\lambda} m_i^{\lambda} - m_k^{\lambda} \right|  + \sum_{k \geq l+p} \left|  f_{k}^{\lambda} - m_k^{\lambda}  \right|\\
& \leq &  (1-\theta_1^{\lambda})m_i^{\lambda} + \left(\sum_{k=2}^p \theta_k^{\lambda} m_i^{\lambda} +\sum_{k = l+1}^{l+p-1}  m^{\lambda}_{k}\right) +\sum_{k\geq l+p} \left(  f^{\lambda}_k + m^{\lambda}_{k}\right)\\
& = & (1-\theta_1^{\lambda})m_i^{\lambda}  + m_i^{\lambda} \sum_{k=2}^{\infty} \theta_k^{\lambda}   + 2\sum_{k > l} m^{\lambda}_{k}.
\end{eqnarray*}
For the last equality it suffices to remark that $\sum_{k\geq l} f^{\lambda}_k$ contains all the remaining fragments of $m_i^{\lambda}$ and all the particles $m_k^{\lambda}$ with $k>l$.\medskip

\noindent Note that if $m\in \ell_{0+}$ the last sum consists of a finite number of terms and it suffices to take $p$ large enough (implying $l$ large) to cancel this term. On the other hand, if $m \in \ell_{\lambda} \setminus \ell_{0+}$ then the last sum is the tail of a convergent serie and since $l \rightarrow \infty$ whenever $p\rightarrow \infty$, we conclude by making $p$ tend to infinity and (\ref{F2_01}) follows. \medskip

\noindent To prove (\ref{F3_01}) consider $\tilde m$, $l := l(m) \vee l(\tilde m)$ and the permutations $\sigma$ and $\tilde \sigma$ associated to this $l$, exactly as in (\ref{sigma}). Let $f$ and $\tilde f$  be the corresponding objects concerning $m$ and $\tilde m$:
\begin{equation}\label{sigma_aux}
 \begin{array}{cccccccccccccc}
 m_1&\cdots&m_{i-1}&\theta_1 m_i&m_{i+1}&\cdots& m_{l-1}&m_{l}&\theta_2 m_i& \theta_3 m_{i}&\cdots& \theta_p m_i &f_{l+p} & \cdots  \\
\tilde m_1&\cdots&\tilde m_{i-1}&\theta_1 \tilde m_i&\tilde m_{i+1}&\cdots& \tilde m_{l-1}&\tilde m_{l}&\theta_2 \tilde m_i& \theta_3 \tilde m_{i}&\cdots& \theta_p \tilde m_i &\tilde f_{l+p} & \cdots  \\ 
 \end{array}
\end{equation}

Using again (\ref{lemma_permutation}) for $(f_k)_{k}$ and $ (\tilde f_k)_{k}$, we have
\begin{eqnarray*}
&&\delta_{\lambda}(f_{i\theta}(m),f_{i\theta}(\tilde m))\\
&&\hspace{5mm} \leq  \sum_{k\geq 1} \left|  f_k^{\lambda} - \tilde f_k^{\lambda} \right| \,\,\,= \,\,\,\left(\sum_{k = 1}^{l}+ \sum_{k = l+1}^{l+p-1}+\sum_{k \geq l+p} \right)\left| f_k^{\lambda}  - \tilde f_k^{\lambda} \right| \\
&&\hspace{5mm} =  \sum_{k = 1}^{l} \left| m_k^{\lambda}  - \tilde m_k^{\lambda} \right| - \left| m_i^{\lambda}  - \tilde m_i^{\lambda} \right|  + \sum_{k = 1}^{p} \theta_k^{\lambda}\left| m_i^{\lambda}  - \tilde m_i^{\lambda} \right| + \sum_{k \geq l+p} \left( f_k^{\lambda}  + \tilde f_k^{\lambda} \right)
\end{eqnarray*}
\begin{eqnarray*}
&&\hspace{5mm} =   \sum_{k = 1}^{l} \left| m_k^{\lambda}  - \tilde m_k^{\lambda} \right| - \left| m_i^{\lambda}  - \tilde m_i^{\lambda} \right|  + \sum_{k = 1}^{p} \theta_k^{\lambda}\left| m_i^{\lambda}  - \tilde m_i^{\lambda} \right|+ \sum_{k > p} \theta_k^{\lambda} \left( m^{\lambda}_i + \tilde m^{\lambda}_i \right) + \sum_{k > l} \left( m^{\lambda}_{k}  + \tilde  m^{\lambda}_{k}\right) \\
&&\hspace{5mm} = \sum_{k = 1}^{l} \left| m_k^{\lambda}  - \tilde m_k^{\lambda} \right|  + |m_i^{\lambda} - \tilde{m}_i^{\lambda}|\left(\sum_{k= 1}^p \theta_k^{\lambda} - 1 \right) + \left( m^{\lambda}_i + \tilde m^{\lambda}_i \right) \sum_{k > p} \theta_k^{\lambda}+ \sum_{k > l} \left( m^{\lambda}_{k}  + \tilde  m^{\lambda}_{k}\right).
\end{eqnarray*}
Notice that the last two sums are the tails of convergent series, note also that $l\rightarrow \infty$ whenever $p\rightarrow \infty$. We thus conclude making $p$ tend to infinity.\medskip

Finally, to prove (\ref{dln}) we consider the permutation $\sigma$ as in (\ref{sigma}) with $p=v$  and  $l:= l(m)$. Recall (\ref{sigma_aux}), we have
\begin{eqnarray*}
\delta_{\lambda}(f_{i\psi_u(\theta)}(m),f_{i\psi_{v}(\theta)}(m)) & = & \delta_{\lambda}(f_{i\psi_{v}(\psi_u(\theta))}(m),f_{i\psi_{v}(\theta)}(m)) \\
&\leq &\sum_{k \geq 1} \left| [f_{i\psi_{v}(\psi_u(\theta))}(m)]_{\sigma(k)}^{\lambda} -[f_{i\psi_{v}(\theta)}(m)]_{\sigma(k)}^{\lambda}\right|\\
& \leq & \sum_{k = u+1}^{v} \theta_k^{\lambda} m_i^{\lambda} + 2\sum_{k > l} m_k^{\lambda}. 
\end{eqnarray*}
We used that $[\psi_{v}(\psi_{u}(\theta))]_{k} = 0$ for $k=u+1,\cdots,v$. Since $m\in \ell_{\lambda}$, we conclude making $l$ tend to infinity. 

\end{proof}

\begin{lemma}\label{ddl_distances}
Consider $m$, $\tilde m \in S^{\downarrow}$ and $1\leq i < j < \infty$. Recall the definition of $d$ (\ref{distance1}), $\delta_{\lambda}$  (\ref{def_distance}), $c_{ij}(m)$ and $f_{i\theta}(m)$  (\ref{c_and_f}) and $\psi_n(\theta)$  (\ref{projector}). For $\lambda \in (0,1)$ and for all $m, \tilde m \in \ell_{\lambda}$ there exists a positive constant $C$ depending on $\lambda$ such that
\begin{equation}\label{d_dlambda}
d(m,\tilde{m})\leq \delta_1(m,\tilde{m}) \leq C (\|m\|_1^{1-\lambda} \vee \|\tilde m\|_1^{1-\lambda})\,\delta_{\lambda}(m,\tilde{m}).
\end{equation}
\noindent Next, 
\begin{eqnarray}
d(c_{ij}(m),m) \leq \frac{3}{2}2^{-i}m_j, && \sum_{1\leq k < l <\infty}d(c_{kl}(m),m) \leq \frac{3}{2} \|m\|_1, \label{d1c}\\ 
d(c_{ij}(m),c_{ij}(\tilde m))& \leq &(2^{i} + 2^{j})d(m,\tilde m). \label{d2c}\\
d(f_{i\theta}(m),m) &\leq& 2(1-\theta_1)2^{-i}m_i,  \label{d1f}\\ 
d(f_{i\theta}(m),f_{i\theta}(\tilde m)) &\leq & C (\|m\|_1^{1-\lambda} \vee \|\tilde m\|_1^{1-\lambda})\,\delta_{\lambda}(m,\tilde{m}),  \label{d2f}\\
d(f_{i\theta}(m),f_{i\psi_n(\theta)}(m)) &\leq & m_i \sum_{k>n}\theta_k. \label{d3f} 
\end{eqnarray}

\end{lemma}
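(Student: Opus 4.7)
\textbf{Proof plan for Lemma \ref{ddl_distances}.}

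The plan is to handle the six assertions more or less in the order listed, exploiting (\ref{permutationd}) (which lets us match up entries via any finite permutation rather than the sorted matching), the pointwise inequality (\ref{ineq_distance}) and the previously proved estimates on $\delta_\lambda$ under $c_{ij}$ and $f_{i\theta}$. For (\ref{d_dlambda}) I would first note that $2^{-k}\le 1$ together with the equality $\delta_1(m,\tilde m)=\sum_k|m_k-\tilde m_k|$ from (\ref{def_distance}) yields $d(m,\tilde m)\le\delta_1(m,\tilde m)$. For the second inequality, apply (\ref{ineq_distance}) with $\alpha=1-\lambda$, $\beta=\lambda$ termwise, giving $|m_k-\tilde m_k|\le C(m_k^{1-\lambda}+\tilde m_k^{1-\lambda})|m_k^\lambda-\tilde m_k^\lambda|$; since $0\le 1-\lambda\le 1$ and $m_k\le\|m\|_1$, one bounds $m_k^{1-\lambda}\le\|m\|_1^{1-\lambda}$ and sums.

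For (\ref{d1c}) and (\ref{d2c}) I would use the unordered version of $c_{ij}(m)$, namely $\hat c_{ij}(m)=(m_1,\dots,m_{i-1},m_i+m_j,m_{i+1},\dots,m_{j-1},0,m_{j+1},\dots)$, which is a permutation of $c_{ij}(m)$. By (\ref{permutationd}), $d(c_{ij}(m),m)\le\sum_k2^{-k}|\hat c_{ij}(m)_k-m_k|=2^{-i}m_j+2^{-j}m_j\le\tfrac32 2^{-i}m_j$ since $j\ge i+1$; summing over $i<j$ gives $\sum_{i<j}\tfrac32 2^{-i}m_j\le\tfrac32\|m\|_1$. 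The same unordered-matching trick compares $\hat c_{ij}(m)$ to $\hat c_{ij}(\tilde m)$ entrywise: only positions $i,j$ and the ``other'' positions contribute, and pulling out $|m_k-\tilde m_k|\le 2^{k}d(m,\tilde m)$ gives (\ref{d2c}) with a factor $1+2^{j-i}\le 2^{i}+2^{j}$.

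For (\ref{d1f}) the crucial observation, and what I expect to be the only mildly subtle point in the lemma, is that since $\theta_1 m_i<m_i\le m_{i-1}$ the reordering step in $f_{i\theta}(m)$ leaves the first $i-1$ coordinates untouched, so $f_{i\theta}(m)_k=m_k$ for all $k<i$. This lets one factor out $2^{-i}$:
\begin{equation*}
d(f_{i\theta}(m),m)=\sum_{k\ge i}2^{-k}|f_{i\theta}(m)_k-m_k|\le 2^{-i}\sum_{k\ge i}|f_{i\theta}(m)_k-m_k|=2^{-i}\delta_1(f_{i\theta}(m),m),
\end{equation*}
and then (\ref{F2_01}) with $\lambda=1$ gives $\delta_1(f_{i\theta}(m),m)\le m_i[\sum_{k\ge 2}\theta_k+(1-\theta_1)]\le 2(1-\theta_1)m_i$.

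The remaining two estimates are combinations of already-proved bounds. For (\ref{d2f}), chain $d\le\delta_1$ (first half of (\ref{d_dlambda})), then (\ref{F3_01}) applied with $\lambda=1$: since $\sum_k\theta_k\le 1$ $\beta$-a.e.\ the correction term $|m_i-\tilde m_i|(\sum_k\theta_k-1)$ is non-positive, so $\delta_1(f_{i\theta}(m),f_{i\theta}(\tilde m))\le\delta_1(m,\tilde m)$; then invoke the second half of (\ref{d_dlambda}) to replace $\delta_1$ by $C(\|m\|_1^{1-\lambda}\vee\|\tilde m\|_1^{1-\lambda})\delta_\lambda(m,\tilde m)$. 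Finally for (\ref{d3f}), use $d\le\delta_1$ and repeat the matching argument that gave (\ref{dln}) with $\lambda=1$: the unordered version of $f_{i\psi_n(\theta)}(m)$ agrees with that of $f_{i\theta}(m)$ except that the fragments $\theta_{n+1}m_i,\theta_{n+2}m_i,\ldots$ are replaced by $0$, so by (\ref{lemma_permutation}) one gets $\delta_1(f_{i\theta}(m),f_{i\psi_n(\theta)}(m))\le m_i\sum_{k>n}\theta_k$, and (\ref{d3f}) follows.
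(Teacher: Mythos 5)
Your proposal is correct, and for the two fragmentation estimates (\ref{d1f}) and (\ref{d2f}) it follows a genuinely cleaner route than the paper. The paper proves (\ref{d1f}) and (\ref{d2f}) by repeating from scratch the finite-permutation/truncation computations used for (\ref{F2_01}) and (\ref{F3_01}), now carrying the $2^{-k}$ weights (resp.\ working with $\delta_1$ and then invoking (\ref{d_dlambda})). You instead treat (\ref{F2_01}) and (\ref{F3_01}) with $\lambda=1$ as black boxes. For (\ref{d1f}) the key extra input is your observation that $\theta_1 m_i<m_i\le m_{i-1}$ forces $[f_{i\theta}(m)]_k=m_k$ for $k<i$ (the reordering leaves the first $i-1$ slots untouched), so $d(f_{i\theta}(m),m)\le 2^{-i}\,\delta_1(f_{i\theta}(m),m)$, after which (\ref{F2_01}) and $\sum_k\theta_k\le1$ finish it; this is a nice simplification that avoids redoing the $l,p\to\infty$ bookkeeping. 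For (\ref{d2f}) the chain $d\le\delta_1$, then (\ref{F3_01}) with $\lambda=1$ (the correction term is nonpositive), then (\ref{d_dlambda}), is again shorter than the paper's direct computation. The remaining items, (\ref{d_dlambda}), (\ref{d1c}), (\ref{d2c}), (\ref{d3f}), are handled essentially as the paper does (the paper cites the reference for (\ref{d1c})--(\ref{d2c}), and does the same truncated-permutation matching for (\ref{d3f})). Two small points worth tightening if you wrote this out in full: the passage from the ``unordered'' arrangement to the decreasingly-sorted one is not literally a finite permutation when $m$ has infinitely many positive entries, so you need the same truncate-at-$l$-then-let-$l\to\infty$ device the paper uses in (\ref{sigma}); and in (\ref{d1f})/(\ref{d2f}) you implicitly use $\sum_k\theta_k\le1$, which only holds $\beta$-a.e., exactly as in the paper's own proof.
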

\begin{proof} 
The first inequality in (\ref{d_dlambda}) follows readily from the definition of $d$ and the second one comes from (\ref{ineq_distance}), with $\alpha = 1-\lambda$ and $\beta = \lambda$. The inequalities (\ref{d1c}) and (\ref{d2c}) involving $d$ are proved in \cite[Corollary 3.2.]{Sto-Coal}.\medskip

We prove (\ref{d1f}) exactly as (\ref{F2_01}). Consider $p$, $l$ and the permutation $\sigma$ defined by (\ref{sigma}), from (\ref{permutationd}) and since $i\leq l+1 \leq l +p$, we obtain 
\begin{eqnarray*}
d(f_{i\theta}(m),m) & \leq &  \left(\sum_{k = 1}^{l}+ \sum_{k = l+1}^{l+p-1}+\sum_{k \geq l+p} \right)2^{-k}\left| f_k- m_k \right| \\
& \leq &  (1-\theta_1)2^{-i}m_i + \sum_{k = l+1}^{l+p-1}2^{-k} \left| \theta_{k-l+1} m_i- m_k \right|  + \sum_{k \geq l+p}2^{-k} \left|  f_{k} - m_k \right| \\
& \leq & (1-\theta_1)2^{-i}m_i  + \left(\sum_{k=2}^p2^{-i} \theta_k m_i +\sum_{k = l+1}^{l+p-1}  m_{k}\right) +\sum_{k\geq l+p} 2^{-i} \left(  f_k + m_{k}\right)\\
& \leq & (1-\theta_1)2^{-i}m_i   + 2^{-i} m_i \sum_{k=2}^{\infty} \theta_k  + 2\sum_{k > l} m_{k}.
\end{eqnarray*}
Since $m\in\ell_1$, we conclude using (\ref{Hyp1_Beta}) and making $l$ tend to infinity.\medskip

Next, we prove (\ref{d2f}) as (\ref{F3_01}) using $\delta_1$. Consider $p$, $l$ and the permutations $\sigma$ and $\tilde \sigma$ defined by (\ref{sigma}). Recall (\ref{sigma_aux}), using (\ref{d_dlambda}) then (\ref{lemma_permutation}) (applied to $\delta_1$) and since, $i\leq l+1 \leq l +p$ we obtain 
\begin{eqnarray*}
&&d(f_{i\theta}(m),f_{i\theta}(\tilde m))\\
&& \hspace{1cm}\leq \delta_1(f_{i\theta}(m),f_{i\theta}(\tilde m))\,\,
 \leq \,\, \left(\sum_{k = 1}^{l}+ \sum_{k = l+1}^{l+p-1}+\sum_{k \geq l+p} \right)\left| f_k  - \tilde f_k \right| \\
&&\hspace{1cm} \leq \sum_{k = 1}^{l}\left| m_k  - \tilde m_k \right| + (\theta_1-1)\left| m_i - \tilde m_i \right|+ \sum_{k = l+1}^{l+p-1} \theta_{k-l+1}\left| m_i  - \tilde m_i \right| + \sum_{k \geq l+p} \left( f_k  + \tilde f_k \right)  \\
&&\hspace{1cm} \leq   \sum_{k = 1}^{l} \left| m_k - \tilde m_k \right|  + \left| m_i - \tilde m_i \right|  \left( \sum_{k = 1}^{p} \theta_k -1\right) +  \left( m_i + \tilde m_i \right)\sum_{k > p} \theta_k + \sum_{k > l} \left( m_{k}  + \tilde  m_{k}\right)\\
& &\hspace{1cm} \\
&&\hspace{1cm} \leq   \sum_{k = 1}^{l} \left| m_k - \tilde m_k \right| +  \left( m_i + \tilde m_i \right)\sum_{k > p} \theta_k + \sum_{k > l} \left( m_{k}  + \tilde  m_{k}\right).
\end{eqnarray*}
We used that for $k\geq l+p$, $f_k$ contains all the remaining fragments of $m_i$ and the particles $m_j$ with $j>l$ and (\ref{Hyp1_Beta}). Since $m,\tilde m \in\ell_1$ we conclude making $p$ tend to infinity and using (\ref{d_dlambda}).\medskip

Finally, for inequality (\ref{d3f}), let $i\geq 1$, $p \geq 1$ and  $l:=l_p(m)= \min\{k\geq 1:m_k \leq (\theta_n/p)m_i\}$ and consider $\sigma$, the finite permutation of $\mathbb N$ that achieves:
\begin{eqnarray}
(f_k)_{k\geq 1} & := & \left(\left[f_{i\theta}(m)\right]_{\sigma(k)}\right)_{k\geq 1}  \nonumber\\
 & = & (m_1,\cdots,m_{i-1},\theta_1 m_i, \cdots, \theta_n m_i,m_{i+1},\cdots, m_{l-1},m_{l}, \left[f_{i\theta}(m)\right]_{l+n}, \cdots ). \label{sigma2}
 \end{eqnarray}
Thus, from (\ref{d_dlambda}) and (\ref{lemma_permutation}), and since $i\leq l+1 \leq l+n+1$, we deduce 
\begin{eqnarray*}
d\left((f_{i\theta}(m),f_{i\psi_n(\theta)}(m)\right) & \leq &  \delta_1\left((f_{i\theta}(m),f_{i\psi_n(\theta)}(m)\right) \,\,
 = \,\,  \sum_{k \geq 1}\left| [f_{i\theta}(m)]_{k} - [f_{i\psi_n(\theta)}(m)]_{k}  \right| \\
& \leq &  \left(\sum_{k = 1}^{l}+ \sum_{k =l+1}^{l+n-1}+\sum_{k \geq l+n} \right) \left| [f_{i\theta}(m)]_{\sigma(k)} - [f_{i\psi_n(\theta)}(m)]_{\sigma(k)}  \right| \\
& \leq &  \sum_{k > n} \theta_k m_i + 2  \sum_{k> l} m_k.
\end{eqnarray*}
The last sum being the tail of a convergent series we conclude making $l\rightarrow \infty$.\medskip

This concludes the proof of Lemma \ref{ddl_distances}.
\end{proof}
\end{appendix}

\end{document}